\newcommand{\n}{\noindent}
\newcommand{\vp}{\varepsilon}
\newcommand{\bb}[1]{\mathbb{#1}}
\newcommand{\cl}[1]{\mathcal{#1}}
\newcommand{\ovl}{\overline}
\theoremstyle{plain}
\newtheorem{thm}{Theorem}[section]
\newtheorem{lem}[thm]{Lemma}
\newtheorem{pro}[thm]{Proposition}
\newtheorem{cor}[thm]{Corollary}
\theoremstyle{definition}
\newtheorem{dfn}[thm]{Definition}
\theoremstyle{remark}
\newtheorem{rem}[thm]{Remark}
\numberwithin{equation}{section}
\def\tilde{\widetilde}
\renewcommand{\tilde}{\widetilde}
\def\C{\bb  C}
\def\CC{\bb  C}
\def\F{\bb  F}
\def\d{\delta}
\def\NN{\bb  N}
\def\CC{\bb  C}
\def\CC{\bb  C}
\def\F{\bb  F}
\def\d{\delta}
\def\NN{\bb  N}
\def\CC{\bb  C}
\def\ov{\overline}
\def\phi{\varphi}
\def\n{\noindent}
\def\nl{\nolimits}
\newtheorem{ass1}{Assertion}
\newtheorem{ass}{}
\begin{document}

\title{A  non-nuclear  $C^*$-algebra  with  the  Weak  Expectation  Property      and  the  Local  Lifting  Property}

\author{by\\
Gilles  Pisier\footnote{ORCID    0000-0002-3091-2049}   \\
Texas  A\&M  University\\
College  Station,  TX  77843,  U.  S.  A.}

\def\C{\mathscr{C}}
\def\B{\mathscr{B}}
\def\I{\cl  I}
\def\e{\cl  E}
 
  \def\a{\alpha}
 
  \maketitle
\begin{abstract}    
We  construct  the first example of a $C^*$-algebra  $A$  with the properties in the title.
This  gives  a  new  example  of  non-nuclear  $A$  for  which  there  is
a  unique  $C^*$-norm  on  $A  \otimes  A^{op}$.
This  example  is  of  particular  interest  in  connection  with the  Connes-Kirchberg  problem,    which  is  equivalent  to  the  question  whether  $C^*({\bb  F}_2)$,  which  is  known  to  have  the  LLP, also  has  the  WEP. Our  $C^*$-algebra  $A$  has  the  same  collection  of finite  dimensional  operator  subspaces  as  $C^*({\bb  F}_2)$ or $C^*({\bb  F}_\infty)$. In addition our example can be made to be quasidiagonal
and of similarity degree (or length) 3.
 In the second part of the paper we reformulate our construction in the more general
framework of a $C^*$-algebra that 
  can be described as the \emph{limit both inductive and projective}
for a sequence of $C^*$-algebras $(C_n)$ 
when each $C_n$ is a \emph{subquotient} of $C_{n+1}$.
We use this to show that for certain local properties of
 injective (non-surjective) $*$-homomorphisms,
there are  $C^*$-algebras for which   the identity map has the same   properties as the $*$-homomorphisms.   \end{abstract}
  
  \medskip{MSC (2010): 46L06, 46L07, 46L09} 
  
    \medskip
  
 The concept of  nuclearity has had a major impact on operator algebra theory (see e.g. \cite{Bla,LauR,RSt,ER}). It was introduced
 by Takesaki in  \cite{[Ta2]} but the name itself was coined by Lance \cite{Lan} in analogy with Grothendieck's  nuclear locally convex spaces.
  A $C^*$-algebra  $A$ is called nuclear if there is a unique $C^*$-norm
  on the algebraic tensor product $A\otimes B$ for \emph{any} other
  $C^*$-algebra  $B$.  By  classical results
   due to Takesaki and Guichardet there is a minimal  and a maximal $C^*$-norm
  on $A\otimes B$ (see e.g. \cite{Tak}), so that the nuclearity of $A$ can be written simply as the  identity   
  of the respective completions that is: $A\otimes_{\min} B=A\otimes_{\max} B$. In his seminal paper \cite{Lan}, Lance asked whether for the latter to hold for all $B$ it suffices to have it for  $B=A^{op}$ (the opposite $C^*$-algebra, i.e.  the same one but with product in reversed order). This question
  was motivated by the case of von Neumann algebras
  for which the pair $A \otimes A^{op}$  admits a fundamental faithful
  representation that later came to be called ``the standard form" (see \cite{Tak2}).
 In  \cite{Kir}  Kirchberg  gave a negative answer (see Remark \ref{ap1})
    by constructing  the  first  example  of  a  non-nuclear  $C^*$-algebra  $A$
such  that  
\begin{equation}  \label{e13}  A  \otimes_{\min}  A^{op}=A  \otimes_{\max}  A^{op}.\end{equation}
In  the  first  lines  of  that  paper  \cite{Kir},    he  observed  that  this  could  be  viewed
as  the  analogue  for  $C^*$-algebras  of  the  author's  result  in  \cite{P0}
for  Banach  space  tensor  products.  It  was  thus  tempting
to  try  to  adapt  the  Banach  space  approach  in  \cite{P0}  to  the    $C^*$-algebra  setting  to  produce
new  examples  satisfying  \eqref{e13}.
In  some  sense  the  present  paper  is  the  result  of  this  quest  but  it      started  to  
be  more  than  wishful  thinking    only    recently.

Kirchberg  (\cite{Kiuhf}  see  also  \cite{P})  proved  that  if  $A$  has  the    Weak  Expectation  Property  (WEP in short, defined in \S \ref{wep})  and  $B$  the  Local  Lifting  Property  (LLP in short, defined in \S \ref{llp})  then
  \begin{equation}  \label{e14}  A  \otimes_{\min}  B=A  \otimes_{\max}  B.\end{equation}
  Thus  if  a  $C^*$-algebra  $A$  has  both  WEP  and  LLP,
  then  \eqref{e14}  holds  with  $A=B$  and  in  fact  since  both  LLP  and  WEP
  remain  valid  for  $A^{op}$  we  have  \eqref{e13}.
  
  Kirchberg  also  proved  (\cite{Kiuhf})  that    $C^*(\F_\infty)$  has  the  LLP.  This  is  in  some  sense  the  prototypical  example  of  LLP,  just  like  $B(H)$  is  for  the  WEP.
  
  The    WEP,  originally  introduced  by  Lance
  \cite{Lan},  has  drawn  more  attention  recently  because
  of  Kirchberg's  work  \cite{Kir}
  and  in  particular  his  proof  that  
  the  Connes  embedding  problem
  is  equivalent  to  the  assertion  that  $C^*(\F_\infty)$  (or  $C^*(\F_2)$)
  satisfies  \eqref{e13}
  or  equivalently  that  it    has  the  WEP.
 We will refer to this as the  Connes-Kirchberg  problem.
 
Note that any $A$ satisfying \eqref{e13} must have the WEP.
In fact the WEP of $A$ is equivalent to   \eqref{e13} 
restricted to ``positive definite tensors", see Remark \ref{rus1}.

The  main  result  of  this  paper  is  the  construction  of  a  non-nuclear  (and  even  non  exact)
separable  $C^*$-algebra  $A$  with  both  WEP  and  LLP.  This  answers  a  question,  that  although
 it remained  implicit    in  Kirchberg's  work (but it is explicitly
in \cite[p. 383]{BO}),  was  clearly  in  the  back  of  his  mind  when  he  produced  the  $A$  satisfying  \eqref{e13}.  
But  since  at  the  time  he  conjectured  the  equivalence  of  WEP  and  LLP,
 the  question  did  not  seem  so  natural  until  the  latter  equivalence  was  disproved  by Junge and the author in  \cite{JP}.
   More specifically, Kirchberg conjectured that  $C^*(\F_\infty)$ and $B(H)$ both satisfy \eqref{e13}
    but the latter was disproved  in  \cite{JP}, while the former is the still open   
     Connes-Kirchberg  problem.

While  we  cannot  prove    \eqref{e13}  for  $C^*(\F_\infty)$,  our  algebra  $A$  has  the  same  collection  of
finite  dimensional  operator  subspaces  as  $C^*(\F_\infty)$.  Thus    our  construction  might
shed  some  light,  one  way  or  the  other,  on  the  Connes-Kirchberg  embedding  problem.

In the second part, we prove a generalization,
which can be viewed as a sort of inductive limit construction for a sequence
of $C^*$-algebras $(C_n)$ where $C_n$ is for each $n$ a subquotient
of $C_{n+1}$. When the $C_n$'s all have the LLP
this construction produces a $C^*$-algebra $A$ for which 
the identity map on $A$ possesses some of the 
tensor product properties of the linking maps. 
For instance, if  the linking map is the embedding $C^*(\F_\infty) \to B(H)$   (and   $B(H)$ is viewed
as a quotient of $C^*(\F) $ for some free group $\F$),  which in some sense has both WEP and LLP, 
we recover our main example.

\medskip

\n{\bf  Some  abbreviations:}  For  short  we  write  f.d.  for  finite  dimensional,
s.a.  for  self-adjoint,  c.p.  for  completely  positive,  c.b.  for  completely  bounded. We denote by $CB(E,F)$ the space of c.b. maps
between two o.s. $E,F$ equipped with the c.b. norm.
  We  reserve  the  notation  $E\otimes  F$  for  the  algebraic  tensor  product
  of  two  linear  spaces. We denote by $Id_E$ the identity 
map on $E$.\\
We refer to   \cite{ER,P4} for background on operator spaces.
  \section{Nuclear  pairs}\label{nuc}

We  start  by  a  few  general  remarks  around  nuclearity  for  pairs.
    \begin{dfn}  A  pair  of  $C^*$  algebras  $(A,B)$  will  be  called  a  nuclear  pair
    \index{nuclear  pair}  if
    $$A  \otimes_{\min}  B=A  \otimes_{\max}  B,$$
    or  equivalently  if  the  min-norm  is equal to  the max-norm     
  on  the  algebraic  tensor  product  $A\otimes  B$.
  \end{dfn}
    \begin{rem}\label{da1}  If  the  min-norm  and  the max-norm  are  equivalent  on  $A\otimes  B$,
  then  they  automatically  are  equal.  
    \end{rem}
        \begin{rem}\label{da20}  
        Let  $A_1\subset  A$  and  $B_1\subset  B$  be  $C^*$-subalgebras.
        In  general,  the  nuclearity  of  the  pair  $(A,B)$  does  \emph{not}  imply  that  of  
        $(A_1,B_1)$.  
        This  ``defect"  is  a  major  feature  of  the  notion  of  nuclearity.
        However,  if    $(A_1,B_1)$  admit  contractive  c.p.  projections  (conditional  expectations)
        $P:  A  \to  A_1$  and  $Q:  B  \to  B_1$  
        then    $(A_1,B_1)$  inherits  the  nuclearity  of    $(A,B)$.  
        More  generally,  this  holds  if  we  only  have  approximate  versions  of  $P$  and  $Q$.
        For  instance,  if  $A_1$  and  $B_1$  are  (closed  s.a.)  ideals  in  $A$  and  $B$
        then  the  nuclearity  of  the  pair  $(A,B)$  does    imply  that  of  
        $(A_1,B_1)$.
            \end{rem}
Recall  that  $A$  is  called  nuclear
\index{nuclear  $C^*$-algebra}  if  $(A,B)$  is  nuclear  for  all  $B$.\\
The  basic  examples    of  nuclear  $C^*$-algebras  
include  all  commutative  ones,
the  algebra  $K(H)$  of  all  compact  operators  on  an  arbitrary  Hilbert  space  $H$,  $C^*(G)$
for      all  amenable  discrete  groups  $G$  and  the  Cuntz  algebras.

We  wish  to  single  out        two  fundamental  examples
$$\mathscr{B}=B(\ell_2)  \quad  {\rm  and  }  \quad\mathscr{C}=C^*(\F_\infty).$$
Recall  that  every  separable  unital  $C^*$-algebra  embeds  in  $\mathscr{B}$  and  is  a  quotient  of  $\mathscr{C}$. 
Moreover $\mathscr{B}$ is injective while $\mathscr{C}$ is in some sense projective (see Remark \ref{tr1}). Neither  $\mathscr{B}$  nor  $\mathscr{C}$  is  nuclear,  nevertheless  :

\begin{thm}[Kirchberg  \cite{Kiuhf}]\label{kir}  The  pair  $(\mathscr{B}  ,\mathscr{C})$  is  nuclear.\\
More  generally,  for  any  free  group  $\F$  and  any
$t\in  B(H)  \otimes  C^*(\F)$  or  $t\in  C^*(\F)  \otimes  B(H)$  we  have  $\|t\|_{\min}=\|t\|_{\max}$.
\end{thm}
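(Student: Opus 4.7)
My plan is to derive this from the general WEP $+$ LLP theorem of Kirchberg recalled earlier in the introduction: if $A$ has the WEP and $B$ has the LLP, then $(A,B)$ is a nuclear pair. The two orderings in the conclusion are equivalent because the flip $a\otimes b\mapsto b\otimes a$ is an isometry for both $\|\cdot\|_{\min}$ and $\|\cdot\|_{\max}$, and the specific assertion about $(\mathscr{B},\mathscr{C})$ is the case $H=\ell_2$, $\F=\F_\infty$ of the general one. Everything thus reduces to showing \emph{(i)} $B(H)$ has the WEP and \emph{(ii)} $C^*(\F)$ has the LLP.

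Claim \emph{(i)} is immediate from Arveson's extension theorem: if $B(H)\subset D$, then $\mathrm{Id}_{B(H)}$ extends to a u.c.p.\ map $D\to B(H)$, yielding a weak expectation. In other words, $B(H)$ is injective and injectivity trivially implies WEP.

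The substantive step is \emph{(ii)}. I would fix a surjection $q: C\to C/J$, a u.c.p.\ map $\phi: C^*(\F)\to C/J$, and a f.d.\ operator subspace $E\subset C^*(\F)$. After a small perturbation one may assume $E\subset C^*(\F_n)$ for some finitely generated subgroup $\F_n\subset \F$, so it suffices to build a u.c.p.\ lift of $\phi|_{C^*(\F_n)}$. The images $u_i:=\phi(U_i)$ of the free generators are unitaries in $C/J$; by the Akemann--Pedersen projectivity theorem for the closed unit ball, each $u_i$ lifts to a contraction $c_i\in C$ with $q(c_i)=u_i$. The classical $2\times 2$ unitary dilation of each $c_i$ inside $M_2(C)$ then produces, via the universal property of the free group, a $*$-homomorphism $\pi: C^*(\F_n)\to M_2(C)$ whose upper-left corner compression $\psi(x):=p\pi(x)p$ (with $p=\mathrm{diag}(1_C,0)$) is a u.c.p.\ map $C^*(\F_n)\to pM_2(C)p\cong C$ satisfying $q\circ\psi = \phi|_{C^*(\F_n)}$ on generators and hence on all of $C^*(\F_n)$.

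The main obstacle, to my mind, is the contractive lifting of the unitaries $u_i\in C/J$ to $c_i\in C$; once that classical input is in hand, the dilation-and-compression step is a routine Stinespring--Paulsen manoeuvre, and the universal property of $C^*(\F_n)$ is exactly what makes free groups (as opposed to arbitrary discrete groups) compatible with such an assembly from generators. The small $\varepsilon$-approximation embedding $E$ into some $C^*(\F_n)$ and the passage from local lifts on each $C^*(\F_n)$ back to LLP for all of $C^*(\F)$ are additional technicalities I would verify but do not expect to seriously obstruct the argument.
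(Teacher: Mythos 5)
Your argument is circular, so it does not constitute a proof of Theorem \ref{kir}. The general statement ``WEP $+$ LLP $\Rightarrow$ nuclear pair'' that you take as your starting point is presented in the paper (and in Kirchberg's and Pisier's work) as a \emph{generalization} of Theorem \ref{kir}, and its proof passes through Theorem \ref{kir}: one locally lifts the LLP side into some $C^*(\F)$, pushes the WEP side into $B(H)$ by a weak expectation, and then invokes precisely the identity $B(H)\otimes_{\min}C^*(\F)=B(H)\otimes_{\max}C^*(\F)$ that you are trying to prove. The same circularity reappears, more sharply, in your steps (i) and (ii). In this paper the WEP and the LLP are \emph{defined} by the tensor identities with $\mathscr{C}$ and $\mathscr{B}$ respectively, so ``$B(H)$ has the WEP'' is literally the statement $(\mathscr{B},\mathscr{C})$ nuclear, i.e. the theorem itself; Arveson's extension theorem only gives you Lance's weak expectation (injectivity), and the implication ``weak expectation $\Rightarrow (A,\mathscr{C})$ nuclear'' is, for $A=B(H)$, exactly Kirchberg's theorem — that is where all the work is. Likewise, your unitary-lifting-plus-dilation argument is a correct proof of Kirchberg's lifting property for $C^*(\F)$ (cf.\ Remark \ref{tr1}), but to convert the lifting property into the tensor statement ``$(C^*(\F),\mathscr{B})$ is nuclear'' one estimates $\|t\|_{\max}$ by lifting $t$ into $C^*(\F')\otimes B(H)$ for a free group $\F'$ and then needs $\min=\max$ \emph{there}, i.e. the theorem again; this dependence is visible in the paper's own proofs of Proposition \ref{p3} and Lemma \ref{l4}, which presuppose the tensor-sense LLP of $C^*(\F)$.

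A genuine proof has to be direct. The paper gives none (it cites Kirchberg \cite{Kiuhf} and the simpler proof in \cite{P}): one shows directly that for the free unitary generators $U_i$ and finitely many $x_i\in B(H)$ one has $\|\sum x_i\otimes U_i\|_{\max}\le\|\sum x_i\otimes U_i\|_{\min}$, the key point in \cite{P} being a dilation lemma for pairs of unitary $n$-tuples with commuting ranges, which lets one realize any pair of commuting representations entering the max norm through a single tuple of unitaries dominated by the min norm. Your dilation manoeuvre for the lifting property is sound and is indeed part of the standard circle of ideas, but it addresses the LP of $\mathscr{C}$, not the tensor identity, and cannot be bootstrapped into Theorem \ref{kir} without assuming the theorem.
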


A  simpler  proof  appears  in  \cite{P}  (or  in  \cite{P4},  or  now  in  \cite{P6}).    

Since  Kirchberg  \cite{Kir}  showed  that  a  $C^*$-algebra  $A$
has  Lance's  WEP if and only if  the  pair  $(A,\mathscr{C})$  is  nuclear,
we  took  the  latter  as  our  definition  of  the  WEP.  Kirchberg  \cite{Kir}  also  showed
that  $A$
has  a  certain  local  lifting  property  (LLP) if and only if  the  pair  $(A,\mathscr{B})$  is  nuclear.  We  again  take  the  latter  as  the  definition  of  the  LLP.
With  this  terminology,  Theorem  \ref{kir}  admits  the  following  generalization:
\begin{cor}  Let  $B,C$  be  $C^*$-algebras.  If  $B$  has      the  WEP  and  $C$  the  LLP  then  the  pair
$(B,C)$  is  nuclear.
\end{cor}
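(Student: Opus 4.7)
The plan is to reduce the corollary to Theorem \ref{kir} by using the LLP of $C$ to compute norms in $B(H)\otimes C$ after embedding $B\hookrightarrow B(H)$, and then using the weak expectation supplied by the WEP of $B$ to transfer the answer back to $B\otimes C$.

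First I would fix an embedding $B\hookrightarrow B(H)$ with $H$ of appropriate cardinality. For any $t\in B\otimes C$, the injectivity of $\|\cdot\|_{\min}$ under $C^*$-subalgebra inclusions yields $\|t\|_{\min,B\otimes C}=\|t\|_{\min,B(H)\otimes C}$. The LLP hypothesis on $C$, combined with the general form of Theorem \ref{kir} that applies to any $B(H)$, then identifies this with $\|t\|_{\max,B(H)\otimes C}$.

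To pass back to $B$ on the max side, I would invoke Kirchberg's equivalence between the paper's tensor definition of WEP and Lance's original formulation. It provides a u.c.p.\ weak expectation $\varphi:B(H)\to B^{**}$ that restricts to the canonical inclusion on $B$. Since $\varphi\otimes Id_C$ is contractive for the max norm, applying it to $t\in B\otimes C\subset B(H)\otimes C$ gives $\|t\|_{\max,B^{**}\otimes C}\leq\|t\|_{\max,B(H)\otimes C}$. A standard normal-extension argument identifies $\|t\|_{\max,B\otimes C}$ with $\|t\|_{\max,B^{**}\otimes C}$: given any commuting pair of representations $(\pi_B,\pi_C)$ of $(B,C)$ on a common Hilbert space, the normal extension $\tilde\pi_B$ of $\pi_B$ to $B^{**}$ has range inside $\pi_B(B)''$ and so still commutes with $\pi_C(C)$. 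Chaining the (in)equalities yields $\|t\|_{\max,B\otimes C}\leq\|t\|_{\min,B\otimes C}$, so the pair $(B,C)$ is nuclear.

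The main obstacle is the appeal to Kirchberg's equivalence, which grants access to the weak expectation from the paper's tensor definition of WEP; once that is in hand, the rest of the argument is a chain of standard operator-algebraic facts (min-injectivity, max-contractivity of u.c.p.\ tensor maps, and normal extension of representations to the bidual).
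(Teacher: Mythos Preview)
Your argument is correct. The paper does not supply its own proof of this corollary; it is stated immediately after Theorem~\ref{kir} as a known consequence of Kirchberg's work, with no further details. Your route---embed $B\subset B(H)$, use the LLP of $C$ to identify min and max on $B(H)\otimes C$, then push back to $B$ via a weak expectation and the identity $\|t\|_{B\otimes_{\max}C}=\|t\|_{B^{**}\otimes_{\max}C}$---is one of the standard proofs and is entirely in the spirit of the tools the paper records (in fact the paper itself uses the $B^{**}$-trick in the proof of Proposition~\ref{p2}).

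Two small points of precision. First, your appeal to ``the general form of Theorem~\ref{kir}'' is slightly misaimed: Theorem~\ref{kir} concerns $C^*(\mathbb F)$, not an arbitrary $C$ with the LLP. What you actually need is that the paper's definition of LLP (nuclearity of $(C,\mathscr B)$ with $\mathscr B=B(\ell_2)$) implies nuclearity of $(C,B(H))$ for every $H$; this follows because any $t\in C\otimes B(H)$ lies in $C\otimes B(H_0)$ for some separable $H_0$ and there is a conditional expectation $B(H)\to B(H_0)$ (cf.\ Remark~\ref{da20}). Second, the weak expectation $\varphi:B(H)\to B^{**}$ is contractive c.p.\ (by Tomiyama, as the paper notes), but need not be unital when $B$ is non-unital; this does not affect your use of it, since contractive c.p.\ already makes $\varphi\otimes Id_C$ max-contractive.
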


  In  \cite{JP}  it  was  shown  that  $\mathscr{B}$  failed  the  LLP,
or  equivalently  that  the  pair  $(\mathscr{B},\mathscr{B}  )$  was  not  nuclear,
which  gave  a  negative  answer  to  one  of  Kirchberg's  questions  in  \cite{Kir},  namely  whether
the  WEP  implies  the  LLP.
However,  the  following  major  conjecture  (equivalent  to  the  converse  implication)  remains  open:\\
{\bf  Kirchberg's  conjecture  :}  The  pair  $(\mathscr{C},\mathscr{C}  )$  is  nuclear,
or  equivalently  $\mathscr{C}$  has  the  WEP.

Kirchberg  showed  at  the  end  of  \cite{Kir}  that  this  conjecture  is  equivalent
to  the  Connes  embedding  problem  whether  any  finite  von  Neumann  algebra
embeds  in  an  ultraproduct  of  matrix  algebras.

The  Kirchberg  conjecture  asserts  that  the  min  and  max  norms
coincide  on  $    \mathscr{C}\otimes  \mathscr{C}      $.
More  recently  in  \cite[Th.  29]{Oz2},  Ozawa      
      proved  that  to  confirm  the  Kirchberg  conjecture  it  suffices  to  show
      that  they  coincide  on  $E^1_n  \otimes  E^1_n$  \emph{for  all}  $n\ge  1$,
      where  $E^1_n$  is  the  span  of  the unit and  $n-1$  first  free  unitary  generators
      of  $\mathscr{C}  $.

  \begin{rem}\label{rus1}  Recall  $A^{op}\simeq  \ovl  A$,  where  $A$  is  the  complex  conjugate  of  $A$.
  By  an  unpublished  result  of  Haagerup  (see  \cite{P6}  for    complete  details)
  a  $C^*$-algebra  $A$  has  the  WEP
 if and only if  the  min  and  max  norms  coincide  on  the  set  of  tensors  in  $\ovl  A  \otimes      A$
  of  the  form  $\sum  \ovl{a_j}  \otimes  a_j$  (we  call  these  positive  definite  tensors  in  \cite{P6}).
    \end{rem}
    
\begin{rem}\label{ap1}  Kirchberg's  construction  in \cite[Th. 1.2]{Kir} of  a  $C^*$-algebra
  $A$ satisfying  \eqref{e13}  is   quite  difficult  to  follow.  
  He   uses to start with a $C^*$-algebra $Q$ that is a quotient of a WEP $C^*$-algebra (QWEP in short).
Then his    $A$  is a (quasidiagonal) extension by the compact operators $\cl K$ of the cone algebra $C(Q)$ of $Q$. The non-nuclearity (actually non-exactness)
of $A$ follows from the fact that for a well chosen $Q$ the short exact sequence 
$\cl K \to A \to C(Q) $ does \emph{not} locally split in his sense.
His $A$ must have the WEP (see Remark \ref{rus1}),
and hence $C(Q)$ and $Q$ are necessarily QWEP  but it seems
unclear whether a suitable choice of $Q$ might lead to
one  for which $A$ has LLP.  Note however 
that $Q$ must be chosen to fail the LLP, otherwise (see \cite[p. 454]{Kir})
  the exact sequence will locally split.
  A  much  clearer  presentation  (unfortunately
without  the  full  details) of his arguments  is  sketched    in  Remark  13.4.6  of  Brown  and  Ozawa's  remarkable  book  \cite{BO}.
    \end{rem}

In  the  next  two  sections  we  gather  some  known  facts  on  the  WEP  and  the  LLP,
that  were  probably  all  known  in  some  form  to  Kirchberg  at  the  time  of  \cite{Kir}.
Since  we  use
    reformulations  best  suited  for  our      construction,  we  include  proofs.
  We  refer  the  reader  to  Ozawa's  concise  survey  \cite{Oz1}
  or  to  our  much  longer  exposition  in  \cite{P6}    for  more  information.

\section{The WEP}\label{wep}

      \n    We  define  the  WEP  for  a  $C^*$-algebra  $A$
  by  the  equality        $A\otimes_{\min}  {\mathscr{C}}=A\otimes_{\max}  {\mathscr{C}}$,  where
  ${\mathscr{C}}$  is  the  full  (or  maximal)  $C^*$-algebra  of  the  free  group
  $\F_\infty$.  
  Kirchberg    showed  that  this  property  is  equivalent  to  a  weak  form  of  extension  property
  (analogous  to  that  of  $L_\infty$  in  Banach  space  theory),  a variant of injectivity that  had  been  considered    by    
      Lance  \cite{Lan}.
      
      Assume  $A\subset  B(H)$  as  a  $C^*$-subalgebra.
      Then  $A$  has  the  WEP
     if and only if  there  is  a    contractive  projection  $P:  B(H)^{**}  \to  A^{**}$.
      
        Equivalently,  this  holds if and only if  there  is  a      contractive
        linear  map  $T:  B(H)  \to  A^{**}$  such  that  $T(a)=a$  for  any  $a\in  A$,
        or  in  other  words  such  that  $T_{|A}$  coincides  with  the  canonical  inclusion
        $i_A:  A  \to  A^{**}$.  Note  that  when  it  exists
        the  contractive  projection  $P$  is  automatically
        completely  contractive  and  completely  positive  by  Tomiyama's  well  known  theorem.  
      This  leads  to  the  following  simple  (known)  criterion
      which,  being  almost  purely  Banach  space  theoretical,  
      will  be  particularly  well  adapted  to  our  needs.
      
      We  denote  here  by  $\ell_1^n  $  the  operator  space  dual  of  $\ell_\infty^n$.
      One  nice  realization  of  $\ell_1^n  $  
      can  be  given  inside  $\C$  :  just  let
      $E_1^n  ={\rm  span}[1,U_1,\cdots,U_{n-1}]  \subset  \C$  where  $(U_j)$  are  the  free  unitary  generators
      of  $\C$,  then  $\ell_1^n  \simeq    E_1^n  $  completely  isometrically.
      Equivalently we could take instead the span of $\{U_1,\cdots,U_n\}$.
      Note   
that  $\|v\|_{cb}=\|v\|$  for  any  $v$  defined  on  $\ell_1^n$.  This  is  known  as  the  maximal  operator  space  structure  of  $\ell_1^n$ (see e.g. \cite[p. 183]{P4}).

      \begin{pro}\label{p1}  A  $C^*$-algebra        $A\subset  B(H)$  has  the  WEP
      if  (and  only  if)  
      for  any  $n\ge  1$  and  any  subspace  $S\subset  \ell_1^n$
          any  linear  map  $u:S  \to  A$  admits
          for  each  $\vp>0$    an  extension
          $\tilde  u:  \ell_1^n\to  A$  with
          $$\|      \tilde  u\|_{cb}  \le  (1+\vp)\|          u\|_{cb}.$$
              \end{pro}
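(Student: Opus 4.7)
I would prove both implications, with the substantive content concentrated in the (if) direction.

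\emph{(WEP $\Rightarrow$ ep).} The WEP provides a completely contractive map $T:B(H)\to A^{**}$ restricting to the canonical inclusion on $A$. Given $u:S\to A$ with $S\subset \ell_1^n$ and $\|u\|_{cb}\le 1$, regard $u$ as a cb-contraction into $B(H)$ and invoke the injectivity of $B(H)$ as an operator space (Arveson/Wittstock) to extend to $\hat u:\ell_1^n\to B(H)$ with $\|\hat u\|_{cb}\le 1$. The composite $T\hat u:\ell_1^n\to A^{**}$ is then a cb-contraction agreeing with $u$ on $S$; because $\ell_1^n$ carries the max operator-space structure, $\|T\hat u\|_{cb}=\|T\hat u\|\le 1$. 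Applying the classical Banach-space principle of local reflexivity to $A\subset A^{**}$ at the finite-dimensional subspace $F=(T\hat u)(\ell_1^n)$ yields a linear $\psi:F\to A$ with $\|\psi\|\le 1+\varepsilon$ and $\psi|_{F\cap A}=\mathrm{id}$. Setting $\tilde u:=\psi\circ T\hat u$ gives the required extension $\ell_1^n\to A$ of $u$ (since $u(S)\subset F\cap A$), and $\|\tilde u\|_{cb}=\|\tilde u\|\le (1+\varepsilon)\|u\|_{cb}$ by the max property once more.

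\emph{(ep $\Rightarrow$ WEP).} Fix $A\subset B(H)$. By Kirchberg's Theorem~\ref{kir} applied to $(B(H),\mathscr{C})$, together with injectivity of the min-tensor norm, the min-norm on $A\otimes\mathscr{C}$ coincides with the norm induced from $B(H)\otimes_{\max}\mathscr{C}$. The WEP of $A$ is therefore equivalent to the isometricity of the canonical map $A\otimes_{\max}\mathscr{C}\to B(H)\otimes_{\max}\mathscr{C}$. To deduce this from ep, I would use the completely isometric identification $\ell_1^n\cong E_1^n\subset\mathscr{C}$: an element $t\in A\otimes E_1^n$ corresponds (via $A\otimes_{\min} E_1^n\cong CB((E_1^n)^*,A)$) to a cb-map whose cb-norm equals $\|t\|_{A\otimes_{\min}\mathscr{C}}$, while the commuting-pair data realizing $\|t\|_{A\otimes_{\max}\mathscr{C}}$ -- a $*$-representation $\pi_A$ of $A$ together with unitaries $V_i$ in the commutant $\pi_A(A)'$ -- can be encoded as a cb-map $S\to A$ with $S\subset\ell_1^n$. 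Applying ep to this $S$-data and passing to a weak-$*$ limit over larger and larger finite configurations produces the weak expectation $T:B(H)\to A^{**}$.

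The main obstacle is the last step: ep directly controls cb-norms of maps out of $\ell_1^n$, but a general element of $A\otimes\mathscr{C}$ involves arbitrary words in the free generators. Bridging the two requires either an inductive argument that uses the $C^*$-algebraic structure of $\mathscr{C}$ to propagate the single-letter control at $E_1^n$ to words of arbitrary length, or a direct construction of $T$ as a weak-$*$ cluster point of approximate finite-dimensional extensions supplied by ep -- in either route, the identity $\|\cdot\|_{cb}=\|\cdot\|$ for maps out of $\ell_1^n$ is used repeatedly to pass freely between ordinary and cb-norms.
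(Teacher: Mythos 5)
Your (WEP $\Rightarrow$ extension) half is essentially correct: extending into $B(H)$ by injectivity, composing with a weak expectation $T:B(H)\to A^{**}$, and using the strong form of local reflexivity (which fixes $F\cap A$ pointwise) together with the identity $\|v\|_{cb}=\|v\|$ for maps defined on $\ell_1^n$ does give the $(1+\vp)$-extension into $A$. This is a mild variant of the paper's argument, which gets from $A^{**}$ back to $A$ more economically, via $CB(\ell_1^n,A^{**})\simeq\ell_\infty^n(A^{**})=\ell_\infty^n(A)^{**}$, Goldstine/Mazur and a small perturbation, without invoking the full principle of local reflexivity.

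The substantive direction (extension property $\Rightarrow$ WEP) is where your proposal has a genuine gap, and you acknowledge it yourself (``the main obstacle is the last step''). Your reduction of the WEP to the isometry of $A\otimes_{\max}\C\to B(H)\otimes_{\max}\C$ is correct (by Theorem \ref{kir} and injectivity of the min norm), but the proposed bridge --- encoding the ``commuting-pair data'' realizing the max norm as a cb-map on some $S\subset\ell_1^n$ and taking a weak* limit --- is not an argument: the hypothesis concerns maps defined on subspaces of $\ell_1^n$ with values in $A$, whereas a max-norm realization of $t\in A\otimes\C$ involves a representation of $A$ and unitaries in its commutant, and you give no mechanism producing the relevant subspaces $S$, nor for propagating control from the span of the free generators $E_1^n$ to arbitrary words in $\C$; that bridge is a nontrivial result in itself (essentially Haagerup's characterization quoted in Remark \ref{rus1}, or a factorization criterion of the kind cited in the ``Possible variants'' section), and you do not establish it. The paper's proof of this direction avoids tensor norms with $\C$ altogether: the WEP is equivalent to the existence of a \emph{contractive} $T:B(H)\to A^{**}$ with $T|_A=i_A$ (Tomiyama upgrades it automatically); by the duality \eqref{e1} and Hahn--Banach such a $T$ exists as soon as the inclusion \eqref{e2} of $\bigl(A\otimes A^*,\ \|\cdot\|_{A\otimes_{\wedge}A^*}\bigr)$ into $\bigl(B(H)\otimes A^*,\ \|\cdot\|_{B(H)\otimes_{\wedge}A^*}\bigr)$ is isometric; and an element of the open unit ball of the latter corresponds to a finite-rank map $v=UV$ with $V:A\to\ell_1^n$ of nuclear norm $1$ and $\|U:\ell_1^n\to B(H)\|<1$, so that when the element lies in $A\otimes A^*$ one applies the extension hypothesis to $u=U|_S$ with $S=V(A)\subset\ell_1^n$ (using $\|U\|_{cb}=\|U\|$) and refactors $v=\tilde u V$ through $A$. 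This duality-plus-factorization step is exactly the missing idea: it is what makes subspaces of $\ell_1^n$ appear naturally, and without it (or a proved Haagerup-type criterion) your outline does not close.
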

    \begin{proof}  This  is  a  well  known  application  of      Hahn-Banach.
    Let  $B$  be  another  $C^*$-algebra.
    Note  the  isometric  identity
    \begin{equation}  \label{e1}
    B(B  ,  A^{**})=  (B    \otimes_{\wedge}  A^*)^*,\end{equation}
    where  $B  \otimes_{\wedge}  A^*$  denotes  the  normed  space  $B\otimes  A^*$  (algebraic)
    equipped  with  the
    projective  norm,      denoted  by  $\|\  \|_{B    \otimes_{\wedge}  A^*}  $.  Let  $X=  B(H)  \otimes    A^*$  and  $Y=A  \otimes    A^*$
    so  that  $Y\subset  X$.
Consider  the  assertion    that  the  inclusion    \begin{equation}  \label{e2}(Y,\|\  \|_{A  \otimes_{\wedge}  A^*}  )  \subset  (X,\|\  \|_{B(H)  \otimes_{\wedge}  A^*}  )\end{equation}
    is  isometric.  We  claim  that  if  this  holds  then    $A$  has  the  WEP.
    Indeed,  assume  that  \eqref{e2}  is  isometric.
      Consider  the  linear  form
    $f:  Y  \to  \CC$  defined  by  $f(a\otimes  \xi)=\xi(a)$  (which  corresponds  through  \eqref{e1}  with  $B=A$
    to  $i_A:A\to  A^{**}$).
    By  Hahn-Banach,  $f$  extends  to  a  linear  form  $g:  (X,\|\  \|_{B(H)  \otimes_{\wedge}  A^*}  )  \to  \C$  with  $\|g\|\le  1$.
    Let  $T:  B(H)  \to  A^{**}$  be  the  map  associated  to  $g$  via  \eqref{e1}.
    Then  $\|T\|  \le  1$  and  
    the  fact  that  $g$  extends  $f$  is  equivalent  to
    $T_{|A}=i_A$,  so  that  $A$  has  the  WEP.
To  complete  the  proof  of  the  if  part
it  suffices  to  show  that  the  extension  property
in  Proposition  \ref{p1}  implies
that  \eqref{e2}  is  isometric.
This  is  easy  to  show  using  the  factorization  of
the      mappings  $v:  A\to  B(H)$  corresponding  to  an  element  $x$  in  the  open  unit  ball
of  $(X,\|\  \|_{B(H)  \otimes_{\wedge}  A^*}  )$.
Such  a  $v$  can  be  written  as
$v=  U  V$,  where    $V:  A  \to  \ell_1^n$  has      nuclear  norm  $1$
and  $\|U:  \ell_1^n  \to  B(H)\|    <1$.      
If  it  so  happens  that    $x\in  A\otimes    A^*$  then  $v(A)=UV(A)\subset  A$.
Let  $S=V(A)\subset  \ell_1^n$  and    $u=U_{|S}:  S  \to  A$.
Note  $\|U  \|_{cb}=\|U  \|$,  and  hence
$\|u\|_{cb}  \le  \|U\|_{cb}  <1$.
Let  $\tilde  u$  be  as  in  the        extension  property
in  Proposition  \eqref{p1},  then  the  factorization
$v=  \tilde  u  V$  now  shows  that  $v$  corresponds  to  an  element  $x$  in  the  open  unit  ball
of  $(Y,\|\  \|_{A  \otimes_{\wedge}  A^*}  )$.  Thus  \eqref{e2}  is  isometric.
This  proves  the  if  part.\\
Conversely,  if  $A$  has  the  WEP,  by  the  injectivity  of  $B(H)$
any  $u:  S  \to  A$  extends  to  a  map      $u^1:  \ell_1^n  \to  A^{**}$  with  $\|u^1\|_{cb}=
\|u\|_{cb}$  and  ${u^1}_{|S}=i_A  u$.  Since  $\ell_\infty^n(A^{**})=\ell_\infty^n(A)^{**}$  isometrically,
there  is  a  net  of  maps  $u_i:  \ell_1^n  \to  A$  with  $\|u_i\|_{cb}\le  1$
tending  pointwise  $\sigma(A^{**},  A^*)$  to  $u^1$.
Then  $(u_i-u)_{|S}  $  tends  pointwise  $\sigma(A  ,  A^*)$  to  $0$,
and  by  Mazur's  theorem  passing  to  convex  combinations  we  obtain  a  net  
$u_i':  \ell_1^n  \to  A$  with  $\|u'_i\|_{cb}\le  1$  such    that  $(u'_i-u)_{|S}  $  tends
pointwise  to  to  $0$  in  norm.  
Thus  for  $i$  large  enough  $u_i'$  is  ``almost"  
the  desired  extension  of  $u$.  Then  for  each  $\vp>0$    a  simple  perturbation  argument
(see  e.g.  \cite[p.  69]{P4})
gives  us  a  true  extension  $\tilde  u$  as  in  Proposition  \ref{p1}.
      \end{proof}
            \begin{rem}\label{rr5}  In  the  preceding  situation,  
            assume  $A\subset  B(H)$.  A  linear  map  $u:  S  \to  A$
            satisfies  $\|u\|_{cb}\le  1$ if and only if  it  admits  an  extension
            $\tilde  u:  \ell_1^n  \to  B(H)$  with  $\|\tilde  u\|\le  1$.
            This  is  immediate  by  the  injectivity  of  $B(H)$
            and  the  equality  $\|\tilde  u\|=\|\tilde  u\|_{cb}    $.\\
            This  allows    us  to  view  the  extension  property  in  Proposition  \ref{p1}  as  a  Banach  space  theoretic  property  of  the  inclusion  $A\subset  B(H)$  like  this:
            any  $u$  that  extends  to  a  contraction  into  $B(H)$
            extends  to  a  map  of  norm  $\le  1+\vp$  into  $A$.
                  \end{rem}  
      \begin{rem}\label{R1}  The  interest  of  Proposition  \ref{p1}
      is  that  the  apparently  weak  form  of  extension  property  considered  there  suffices  to  imply  the  WEP.
      But  actually,  any  $A$  with  WEP  satisfies  a  stronger  extension  property,  as  follows:\\
    {\it  Let  $C$  be  a  separable  $C^*$-algebra  with  the  LLP  and  let  $A$  be  another  one  with  the  WE{P}.    Then  for  any  finite  dimensional  subspace  $E\subset  C$  and  any  $\vp>0$,  any  $u\in  CB(E,A)$  admits  an  extension  
  $\tilde{u}\in  CB(C,A)$  such  that  $\|\tilde{u}\|_{cb}\leq(1+\vp)\|  u\|_{cb}$.}
  See  \cite{Kir}  or  \cite[Th.  20.27]{P}  for  full  details.
$$\xymatrix{&C\ar@{-->}[dr]^{  \tilde  u  }  \\
&  E\ar@{^{(}->}[u]  
  \ar[r]^{  u  \quad  }  &  A}$$
      \end{rem}

  \section{The  LLP}\label{llp}

      \n    We  define  the  LLP  for  a  $C^*$-algebra  $A$
  by  the  equality        $A\otimes_{\min}  {\mathscr{B}}=A\otimes_{\max}  {\mathscr{B}}$,  where
  ${\mathscr{B}}=B(\ell_2)$.  
  Kirchberg    showed  that  this  property  is  equivalent  to  a  
  certain  local  lifting  property  (analogous  to  that  of  $L_1$  in  Banach  space  theory),
  which  has  several  equivalent  forms,  one  of  which  as  follows:
  
  \begin{pro}\label{p3}  A  $C^*$-algebra  $A$  satisfies  $A\otimes_{\min}  {\mathscr{B}}=A\otimes_{\max}  {\mathscr{B}}$
  if and only if   for  any  $*$-homomorphism  $\pi:  A  \to  C/\I$  into  a  quotient  $C^*$-algebra,
  for  any  f.d.  subspace  $E  \subset  A$  and  any  $\vp>0$  the  restriction  $\pi_{|E}$  admits  a  lifting
  $v:  E  \to  C$  with  $\|v\|_{cb}\le  (1+\vp)$.
  \end{pro}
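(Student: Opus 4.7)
Plan: I approach the biconditional by treating the two implications separately; the backward direction is substantially more direct than the forward direction.

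For $(\Leftarrow)$, realize $A$ as a quotient $A = C^*(\F)/J$ of a free $C^*$-algebra (take $\F$ free on a generating set of $A$), with quotient map $q_\F : C^*(\F) \to A$. The identity map $\mathrm{id}_A : A \to A = C^*(\F)/J$ is a $*$-homomorphism into a quotient $C^*$-algebra, so the hypothesis furnishes, for each f.d.\ $E \subset A$ and $\vp > 0$, a cb-bounded lift $w : E \to C^*(\F)$ of the inclusion $E \hookrightarrow A$ with $\|w\|_{cb} \le 1+\vp$. Given a finite tensor $t = \sum a_i \otimes b_i \in A \otimes \mathscr{B}$ with $a_i \in E$, form $\tilde t := (w \otimes \mathrm{id}_{\mathscr{B}})(t) \in C^*(\F) \otimes \mathscr{B}$. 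Theorem~\ref{kir} applied to the nuclear pair $(C^*(\F), \mathscr{B})$ gives $\|\tilde t\|_{\min} = \|\tilde t\|_{\max}$. Injectivity of $\otimes_{\min}$ on $E \hookrightarrow A$ together with $\|w\|_{cb} \le 1+\vp$ yields $\|\tilde t\|_{\min} \le (1+\vp)\|t\|_{\min}$, while the quotient property of $\otimes_{\max}$ applied to $(q_\F \otimes \mathrm{id})(\tilde t) = t$ gives $\|t\|_{\max} \le \|\tilde t\|_{\max}$. Chaining produces $\|t\|_{\max} \le (1+\vp)\|t\|_{\min}$; sending $\vp \to 0$ forces $\|t\|_{\min} = \|t\|_{\max}$.

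For $(\Rightarrow)$, given $\pi$, $E$, $\vp$ as in the statement, I use the identification $CB(E, X) = E^* \otimes_{\min} X$ (valid for f.d.\ $E$) to view $\pi|_E$ as an element of $E^* \otimes (C/\I)$ of min-norm $\|\pi|_E\|_{cb} \le 1$. A cb lift $v : E \to C$ of $\pi|_E$ with $\|v\|_{cb} \le 1+\vp$ exists iff the quotient norm of $\pi|_E$ in $E^* \otimes_{\min} C / E^* \otimes_{\min} \I$ is at most $1+\vp$. By operator-space Hahn-Banach and duality, this reduces to an isometric inclusion (up to $1+\vp$) of $E \otimes (C/\I)^*$ into $E \otimes C^*$ with respect to the operator-space projective tensor products. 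Embedding $C \subset B(H) = \mathscr{B}$, I derive this inclusion from the hypothesis $A \otimes_{\min} \mathscr{B} = A \otimes_{\max} \mathscr{B}$ by a Hahn-Banach-style argument along $E \subset A$ that dualizes the one in the proof of Proposition~\ref{p1}. A final Mazur/perturbation step, mirroring the end of that proof, converts the resulting approximate lift into a genuine one satisfying the stated cb-norm bound.

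The main obstacle lies in the $(\Rightarrow)$ direction: one must transfer the min$=$max equality on $A \otimes \mathscr{B}$ through the embedding $C \subset \mathscr{B}$ to the short exact sequence $\I \hookrightarrow C \twoheadrightarrow C/\I$, despite $\otimes_{\min}$ not respecting quotients in general. The inclusion $E \subset A$ is the crucial bridge: it allows the finite-dimensional $E$ to inherit from $A$'s LLP the quasi-exactness needed for the min tensor product at the f.d.\ level.
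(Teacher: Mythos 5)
Your backward direction is correct and is essentially the paper's argument unrolled: the paper writes $A$ as a quotient of $C^*(\F)$, notes $C^*(\F)$ has the LLP, and invokes Lemma \ref{l4} together with Kirchberg's Theorem \ref{kir}; your computation with $w\otimes \mathrm{id}$, min-injectivity and max-contractivity of $q_\F\otimes\mathrm{id}$ is exactly that. One repairable omission: a non-unital $A$ is never a quotient of the unital algebra $C^*(\F)$, so ``take $\F$ free on a generating set of $A$'' does not make sense in general; the paper handles this by passing to the unitization $\tilde A=C^*(\F)/J$ and to the ideal $C=q^{-1}(A)$, which still has the LLP.

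The forward direction, however, has a genuine gap. Your reduction of the existence of a lift with $\|v\|_{cb}\le 1+\vp$ to a norm statement on $E^*\otimes_{\min}C/(E^*\otimes \I)$ is fine, but the decisive step --- ``I derive this inclusion from the hypothesis $A\otimes_{\min}\B=A\otimes_{\max}\B$ by a Hahn-Banach-style argument along $E\subset A$'' --- is precisely the content of the proposition and is never carried out: nowhere in your chain of reductions does the max norm actually enter, so there is no identified point where the hypothesis is used, and your plan to ``embed $C\subset\B$'' points in an unworkable direction, since nothing maps $\B$ back into $C$. The paper's mechanism is different and concrete, and needs no duality or Mazur step: regard the inclusion $E\to A$ (equivalently, push forward by $\pi$) as a tensor $t\in E^*\otimes A$ with $\|t\|_{\min}=1$, and embed $E^*$ (not $C$) completely isometrically into $\B$; the hypothesis then gives $\|t\|_{\B\otimes_{\max}A}=1$. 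Next use the projectivity (``exactness'') of the maximal tensor product, $\B\otimes_{\max}(C/\I)=(\B\otimes_{\max}C)/(\B\otimes_{\max}\I)$, to lift the corresponding tensor to an element of $E^*\otimes C$ of max norm $\le 1+\vp$; since the min norm is dominated by the max norm and is injective for $E^*\subset\B$, the associated map $v:E\to C$ satisfies $\|v\|_{cb}\le 1+\vp$ and lifts $\pi_{|E}$. Without these two ingredients --- the embedding $E^*\subset\B$ that converts the hypothesis into a max-norm bound on a finite-rank tensor, and the quotient behaviour of $\otimes_{\max}$ that lets you lift that tensor with control --- your sketch cannot be completed as stated.
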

  \n Note:  Actually   when  $A$  has  the  LLP  the  preceding  local  lifting  even  holds  with  $\vp=0$ (see the proof of Proposition \ref{p3}).
  \begin{rem}\label{tr1}  According  to  Kirchberg  \cite{Kiuhf}  a unital   $C^*$-algebra  $A$  has  the  lifting  property  (LP)  if  any  unital c.p.   $\pi:  A  \to  C/\I$  as  above  admits  a  (global)  unital  c.p.  lifting, and if $A$ is not unital, it is said to have the LP if its unitization does.
   He  proved  that  $\C$  has  the  LP (see \cite{Oz1} or \cite[p. 376]{BO} for a proof).  By  known  results,  it  follows  that  $C\otimes_{\min}  \C$
  has  the  LP  whenever  $C$  is  nuclear  and  separable.  Roughly  this  can  be  checked  using  the  CPAP
  of  $C$, Kirchberg's theorem that $M_n(\C)$ has the LP for any $n$ (see \cite[p. 59]{Kiuhf})  and  the  fact  (due  to  Arveson \cite{[Ar4]})  that  the  set  of  liftable  maps  
  on  separable  $C^*$-algebras  is  pointwise  closed. 
  Actually, Kirchberg observed in \cite{Kir} that if his conjecture that 
  $\C$ has the WEP is correct, then the LLP implies the LP in the separable case.
  Kirchberg's LP (as we just defined it) implies
  that any completely contractive c.p. map $u: A \to C/\I$ admits a completely contractive lifting
  from $A$ to $C$, but the converse does not seem clear, although the analogous converse does hold  for the LLP.
    \end{rem}

\begin{lem}\label{l4}  Assume  that  an  isomorphism  $\pi:  A  \to  C/\I$  is  locally  liftable,  meaning  that  it
  has  the      property  considered  in    Proposition  \ref{p3}.  Let  $B$  be  any  $C^*$-algebra.\\
Then      $(C,B)$    nuclear  $\Rightarrow$  $(A,B)$    nuclear.\\
In  particular  if $C$  has the LLP  then   $A$  has the LLP.  \end{lem}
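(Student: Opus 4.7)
The plan is to show $\|t\|_{\max} \le \|t\|_{\min}$ for every $t \in A \otimes B$ by transferring $t$ up to $C \otimes B$ via a local lift of $\pi$, invoking the nuclearity of $(C,B)$ there, and then descending via the quotient map and the isomorphism $\pi$.

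Concretely, given $t = \sum_{i=1}^N a_i \otimes b_i \in A \otimes B$ and $\vp>0$, I would set $E = \mathrm{span}\{a_1,\ldots,a_N\} \subset A$ and invoke the hypothesis from Proposition \ref{p3} to produce a lifting $v\colon E \to C$ with $\|v\|_{cb}\le 1+\vp$ and $q\circ v = \pi_{|E}$, where $q\colon C \to C/\I$ is the quotient. Setting $\tilde t = (v\otimes Id_B)(t) \in C \otimes B$, injectivity of the min-norm together with the defining property of the c.b.\ norm gives $\|\tilde t\|_{\min} \le (1+\vp)\|t\|_{\min}$, and the assumed nuclearity of $(C,B)$ upgrades this to $\|\tilde t\|_{\max} \le (1+\vp)\|t\|_{\min}$.

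To come back down I would apply the $*$-homomorphism $q\otimes Id_B$, which is max-contractive, and observe that $(q\otimes Id_B)(\tilde t) = (\pi\otimes Id_B)(t)$. Since $\pi$ is a $*$-\emph{isomorphism}, $\pi\otimes Id_B$ is a $*$-isomorphism of the algebraic tensor products and therefore extends isometrically to the max-completions (the max-norm being the universal $C^*$-seminorm, a purely $*$-algebraic invariant). Chaining the estimates yields $\|t\|_{\max} = \|(\pi\otimes Id_B)(t)\|_{\max} \le \|\tilde t\|_{\max} \le (1+\vp)\|t\|_{\min}$, and $\vp\to 0$ gives the desired equality. For the LLP statement I would then simply specialize to $B=\mathscr{B}$ and appeal to the definition of LLP given at the start of \S\ref{llp}.

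The one step that I expect to warrant the most care is the last identification, where bijectivity of $\pi$ is used to turn the only-contractive map $\pi\otimes Id_B$ into a max-isometry: for a general inclusion of $C^*$-algebras $A\subset D$ the induced map $A\otimes_{\max}B\to D\otimes_{\max}B$ need not be isometric, so the hypothesis that $\pi$ is an isomorphism (not merely an injective locally liftable $*$-homomorphism) is doing genuine work here.
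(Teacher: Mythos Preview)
Your proof is correct and follows essentially the same route as the paper's: lift $t$ from $E\otimes B$ to $C\otimes B$ via the c.b.\ local lift $v$, use the nuclearity of $(C,B)$ to pass from min to max there, descend via $q\otimes Id_B$, and invoke the isomorphism $\pi$ to identify $\|t\|_{A\otimes_{\max}B}$ with $\|(\pi\otimes Id_B)(t)\|_{(C/\I)\otimes_{\max}B}$. Your explicit justification of the last step (that the max-norm, being the universal $C^*$-seminorm, is preserved by $*$-isomorphisms) and your remark that the isomorphism hypothesis is genuinely needed are both on point.
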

\begin{proof}
Indeed,  let  $t\in  E\otimes  B$  with  $E\subset  A$  f.d.  with  $\|t\|_{\min}=1$.
Let  $q:  C  \to  C/I  $  be  the  quotient  map.
Then    if  $(C,B)$    is  nuclear  
$$\|  (v\otimes  Id)(t)\|_{C\otimes_{\max}  B}=\|  (v\otimes  Id)(t)\|_{C\otimes_{\min}  B}\le  1+\vp$$
and  hence
$\|  (\pi\otimes  Id)(t)\|_{C/I\otimes_{\max}  B}=\|  (qv\otimes  Id)(t)\|_{C/I\otimes_{\max}  B}    \le  1+\vp$.
Since  $\pi$  is  an  isomorphism  we  obtain  $\|t\|_{A\otimes_{\max}  B}=1$,  and  hence
$(A,B)$  is  nuclear.  \end{proof}

\begin{proof}[Proof  of  Proposition  \ref{p3}]  
Let  $A$  be  unital.  Then  $A=C^*(\F)/I$  for  some  free  group  $\F$,
and  $C^*(\F)$  has  the  LLP.
In  general  let  $\tilde  A$  be  the  unitization,  let  $q:  C^*(\F)  \to  \tilde  A$
be  a  surjective  $*$-homomorphism,  and  let  $C=q^{-1}(A)\subset  C^*(\F)$.
Since  $C$  is  an  ideal  in  $C^*(\F)$,  it  still  has  the  LLP.
By  Lemma  \ref{l4}  with  $\pi:  A  \to  C/\ker(q)$,  the  if  part  follows.\\
Conversely,  if  $A$  has  the  LLP,  so  does  its  unitization  $\tilde  A$  (see  Remark  \ref{da20}).
If  $\pi:  A  \to  C/I$  is  as  in  Proposition  \ref{p3},  then  $\pi$  extends  to  a  unital
$*$-homomorphism    $\tilde  \pi:  \tilde  A  \to  \tilde  C/I$  
and  it  is  easy  to  check  that  if  $\tilde  \pi$  is  locally  liftable  then
$\pi$  also  is.  Thus  to  prove  the  only  if  part  we  may  assume  that  $A,C$  and  $\pi$  are  all  unital.
Again  we  write  $A=C/I$  with  $C=C^*(\F)$.  Let  $E\subset  A  $  be  a  f.d.  subspace.  
The  inclusion  map  $E\to  A$  corresponds  to  a  tensor  $t\in  E^*  \otimes  A$
with  $\|t\|_{\min}=1$  when  $E^*$  is  equipped  with  its  dual  operator  space  structure  (see  \cite{ER,P4}).
We  may  assume  $E^*\subset  \B$  completely  isometrically.
Viewing  $t\in  \B  \otimes  A$  the  LLP  assumption  implies  $\|t\|_{  \B  \otimes_{\max}  A}=1$.
Since  ${  \B  \otimes_{\max}  A}={  \B  \otimes_{\max}  C}/{  \B  \otimes_{\max}  I}$.  
The  tensor  $t$  admits  a  lifting  $t\in  E^*  \otimes  C$  with  $\|t\|_{  \B  \otimes_{\max}  C}\le 1+\vp$.
A  fortiori,  we  have  $\|t\|_{  E^*\otimes_{\min}  C}<1+\vp$,  and  the
linear  map  $\tilde  u:  E  \to  C$  associated  to  $t$  gives  us  the  desired  local  lifting.
With more effort (see \cite[p. 45]{P4}) one can show the same for $\vp=0$. 
\end{proof}

Let    $E,F$  be  operator  spaces.
Recall
\begin{equation}\label{ee11} 
d_{cb}(E,F)=\inf  \{\|u\|_{cb}\|u^{-1}\|_{cb}\}\end{equation}
where  the  infimum  runs  over  all  complete  isomorphisms  $u:E  \to  F$  whenever
$E,F$  are  completely    isomorphic  
(e.g.  if  $E,F$  are  of  the  same  finite  dimension),
and  $d_{cb}(E,F)=\infty$  if  they  are  not  completely  isomorphic.
Definition \ref{d17}  below is just the obvious analogue
   of the notion of a Banach space finitely representable (in short f.r.) in another one.
\begin{dfn}\label{d17}
Let  $A,C$  be     $C^*$-algebras (or operator spaces).
We will say that $A$ ``locally embeds" in $C$
if
for any $\vp>0$ the following property holds: for any f.d. $Z \subset A$ there is
a f.d. $Z' \subset C$ such that $d_{cb}(Z,Z')\le 1+\vp$.
\\
We say that $A$ and $C$ are ``locally equivalent" if
each one locally embeds in the other.
\end{dfn}
 \begin{rem} Concentrating on the case $C=\C$,
 let us denote (as  in  \cite[p.  343]{P4})    for any o.s. $A$
$$d_{S\C}(A)=\sup  \{\inf  \{d_{cb}(Z,Z')\mid  Z'\subset  \C\} \mid  Z\subset  A, \dim(Z)<\infty\}.$$
One defines similarly $d_{S\cl C}(A)$ for any $C^*$-algebra $\cl C$.\\
 Then $A$ locally embeds in $\C$ means that $d_{S\C}(A)=1$.
The following  formula
 was proved in \cite[Th. 4.5]{JP} (see also \cite[p. 349]{P4}
 or \cite[Cor. 20.6]{P6}): for any o.s. $A\subset B(H)$
 we have
  \begin{equation}  \label{us10}
  d_{S\C}(A)
 =
 \sup\{\|t\|_{B(H)\otimes_{\max} \B }/ \|t\|_{B(H)\otimes_{\min} \B } \mid t\in A \otimes \B \}.\end{equation} 
 When $A$ is a $C^*$-algebra, the basic properties
 of $C^*$-norms imply that 
if 
$d_{S\C}(A) \le 1+\vp$ for some $\vp>0$
then $d_{S\C}(A)=1$. 
\end{rem}
  \begin{rem}\label{rus3}
  Note that when $A\subset B(H)$, we have obviously   $\|t\|_{B(H)\otimes_{\max} \B }
  \le \|t\|_{A\otimes_{\max} \B }$
  for any $t\in A \otimes \B$. Therefore,
     \eqref{us10} implies that $ d_{S\C}(A)
 =1$ for any $A$ with the LLP.
 In other words, any $A$ with the LLP locally embeds in $\C$.
 \end{rem}
The  next  result  is  related  to  \cite{JP}.  In  the  latter,
it  was  shown  that  WEP  $\not\Rightarrow$  LLP,  and  at  the  same  time  that  there  are  
$A$'s     that  do not locally embed in $\C$.    \begin{pro}\label{p2}  Let  $A$  be  a  $C^*$-algebra    that  locally embeds in $\C$.
    If        $A$  has  the  WEP
    then  it  has  the  LLP.
  \end{pro}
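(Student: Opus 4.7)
The plan is to show $A\otimes_{\min}\B = A\otimes_{\max}\B$ by a ``transference'' argument: a tensor $t\in A\otimes\B$ lives in a f.d.\ piece of $A$, which embeds almost isometrically into $\C = C^*(\F_\infty)$; over there Kirchberg's Theorem~\ref{kir} identifies the min and max norms; and finally the WEP of $A$ (via the stronger extension property stated in Remark~\ref{R1}) lets us transfer the max-estimate back to $A\otimes\B$.

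In more detail, I would fix $t\in A\otimes\B$ and $\vp>0$, and reduce to the case $t\in Z\otimes\B$ for some f.d.\ $Z\subset A$. Since $A$ locally embeds in $\C$, I would choose a f.d.\ subspace $Z'\subset\C$ and a complete isomorphism $u\colon Z\to Z'$ normalized so that $\|u\|_{cb}\le 1$ and $\|u^{-1}\|_{cb}\le 1+\vp$. Setting $t'=(u\otimes Id)(t)\in Z'\otimes\B\subset\C\otimes\B$, the complete isometry $Z'\hookrightarrow\C$ combined with $\|u\|_{cb}\le 1$ gives
$$\|t'\|_{\C\otimes_{\min}\B}\le \|t\|_{A\otimes_{\min}\B}.$$
Applying Theorem~\ref{kir} to $t'\in\C\otimes\B$ then yields
$$\|t'\|_{\C\otimes_{\max}\B}=\|t'\|_{\C\otimes_{\min}\B}\le \|t\|_{A\otimes_{\min}\B}.$$

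Now comes the key transfer back. The map $v=u^{-1}\colon Z'\to A$ has $\|v\|_{cb}\le 1+\vp$; since $\C$ has the LLP and $A$ has the WEP, Remark~\ref{R1} produces an extension $\tilde v\colon \C\to A$ with $\|\tilde v\|_{cb}\le (1+\vp)\|v\|_{cb}\le (1+\vp)^2$. Because $\tilde v$ agrees with $v=u^{-1}$ on $Z'$, we have $(\tilde v\otimes Id)(t')=t$ in $A\otimes\B$, and therefore
$$\|t\|_{A\otimes_{\max}\B}\le \|\tilde v\|_{cb}\,\|t'\|_{\C\otimes_{\max}\B}\le (1+\vp)^2\,\|t\|_{A\otimes_{\min}\B}.$$
Letting $\vp\to 0$ gives $\|t\|_{A\otimes_{\max}\B}=\|t\|_{A\otimes_{\min}\B}$, which is the LLP by the definition of \S\ref{llp}.

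The conceptually hardest ingredient is the extension step via Remark~\ref{R1}: the naive extension property of Proposition~\ref{p1} only allows extensions from subspaces of $\ell_1^n$ (equivalently, of $E_1^n\subset\C$), whereas here we need to extend from an arbitrary f.d.\ subspace $Z'\subset\C$. This is exactly the strengthening, due to Kirchberg, that makes WEP combine cleanly with LLP-sources; once one accepts it, the rest is bookkeeping. (If one wished to avoid citing Remark~\ref{R1}, an alternative would be to embed $Z'$ into some $E_1^n$ up to a further $(1+\vp)$ factor and invoke Proposition~\ref{p1} directly, at the cost of absorbing another factor into the final estimate.)
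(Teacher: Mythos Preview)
Your argument has a genuine gap at the transfer step
\[
\|t\|_{A\otimes_{\max}\B}\le \|\tilde v\|_{cb}\,\|t'\|_{\C\otimes_{\max}\B}.
\]
A completely bounded map $\tilde v:\C\to A$ does \emph{not} in general give a bounded map $\tilde v\otimes Id_{\B}$ from $\C\otimes_{\max}\B$ to $A\otimes_{\max}\B$ with norm controlled by $\|\tilde v\|_{cb}$. The cb-norm governs the min-tensor product; for the max-tensor product one needs the decomposable norm $\|\cdot\|_{dec}$ of Haagerup, and $\|\tilde v\|_{dec}$ can be strictly larger than $\|\tilde v\|_{cb}$ when the range $A$ is not injective. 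Remark~\ref{R1} as stated only gives a cb-extension, so invoking it does not furnish the max-norm bound you need. The parenthetical alternative suffers from the same defect (and the aside that an arbitrary f.d.\ $Z'\subset\C$ embeds $(1+\vp)$-isomorphically into some $E_1^n\simeq\ell_1^n$ is also unjustified).

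The paper's proof handles exactly this point. Rather than extending into $A$, it uses the WEP in its concrete form---the factorization of $i_A:A\to A^{**}$ through some $B(H)$---to produce an extension $\tilde v:\C\to A^{**}$ with $\|\tilde v\|_{dec}=\|v\|_{cb}$ (since dec and cb coincide for maps into $B(H)$, by Haagerup). The dec-norm then legitimately bounds $\|(\tilde v\otimes Id)\cdot\|_{A^{**}\otimes_{\max}\B}$, and one finishes with the standard fact that $\|(i_A\otimes Id)t\|_{A^{**}\otimes_{\max}\B}=\|t\|_{A\otimes_{\max}\B}$. Your overall strategy (localize, push into $\C$, apply Theorem~\ref{kir}, pull back) is the right one; what is missing is precisely this dec-norm control on the pull-back map.
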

  \begin{proof}
  We  will  use  several  results  from  \cite{Ha}  (see  also    \cite[chap.  6]{P6}  for  a  detailed  presentation).
  Assume    that $A$  locally embeds in $\C$ and  has  the  WEP.  Let  $t\in  A  \otimes  \B$.
  We  will  show  that  $\|t\|_{\max}=\|t\|_{\min}$.
  Let  $Z\subset  A$  be  a  f.d.  subspace  such  that  $t\in  Z  \otimes  \B$.
  For  any  $\vp>0$  there  is  $Z'\subset  \C$    and  an  isomorphism  $v:Z'  \to  Z  \subset  A$  with
  $\|v\|_{cb}\|v^{-1}\|_{cb}<1+\vp$.  Using  the  factorization  of  the  canonical  inclusion  $i_A:  A  \to  A^{**}$
    through  some  $B(H)$  (which  is  one  form  of  the  WEP)
    we  find  an  extension  $\tilde  v:  \C  \to  A^{**}$  of  $v$
        such  that
    $\|\tilde  v\|_{dec}  =\|v\|_{cb}$  (here  we  use  the  dec-norm  of  \cite{Ha}  and  the  fact  due  to  Haagerup  that  
    the  dec-norm  and  the  cb-norm  coincide  for  maps  with  range  $B(H)$).
  Now  we  have  $  (i_A  \otimes  Id)t=(\tilde  v  v^{-1}\otimes  Id)  t$  and  hence  (again  using  \cite{Ha})
  $$\|(i_A  \otimes  Id)t\|_{A^{**}  \otimes_{\max}  \B  }
  \le  \|\tilde  v\|_{dec}    \|(v^{-1}\otimes  Id)  t\|_{\C  \otimes_{\max}  \B  }  \le  \|    v\|_{cb}    \|(v^{-1}\otimes  Id)  t\|_{\C  \otimes_{\max}  \B  }.$$
  By  Theorem  \ref{kir},  
  $\|(v^{-1}\otimes  Id)  t\|_{\C  \otimes_{\max}  \B  }=\|(v^{-1}\otimes  Id)  t\|_{\C  \otimes_{\min}  \B  }$
and  hence  we  find
$$\|(i_A  \otimes  Id)t\|_{A^{**}  \otimes_{\max}  \B  }
  \le  \|    v\|_{cb}    \|(v^{-1}\otimes  Id)  t\|_{\C  \otimes_{\min}  \B  }  \le  \|    v\|_{cb}  \|    v^{-1}\|_{cb}  \|t\|_{A    \otimes_{\min}  \B  }\le  (1+\vp)    \|t\|_{A    \otimes_{\min}  \B  }.$$
  But  by  an  easy  and  well  known  argument  we  have
  $\|(i_A  \otimes  Id)t\|_{A^{**}  \otimes_{\max}  \B  }=\|  t\|_{A    \otimes_{\max}  \B  }$  for  any  $t\in  A\otimes  \B$,
  thus  since  $\vp>0$  is  arbitrary  we  conclude  $\|t\|_{\max}\le  \|t\|_{\min}$.
        \end{proof}  
  
\section{Outline}
\label{out}
  We  follow  the  general  strategy  in  \cite{P0} (see also \cite{BoP}).
We  construct  a  sequence  of  operator  spaces  $E_n  \subset  E_{n+1}$
such  that  
for  any  $S  \subset  \ell_1^n$,  any  $u:  S  \to  E_n$  admits  an  extension  (or  an  approximate  extension)
$\tilde  u  \to  E_{n+1}$  
but  into  the  larger  space  $E_{n+1}$,  with  $\|\tilde  u\|_{cb}  \approx      \|u\|_{cb}$,  as  in  the  following  diagram.

$$\xymatrix{&\ell_1^n\ar[r]^{  \tilde  u  \quad  }  &  E_{n+1}    \\
&  S\ar@{^{(}->}[u]  
  \ar[r]^{  u  \quad  }  &  E_n\ar@{^{(}->}[u]}$$
  
  The  rough  plan  is  then  to  start  from  a  space  $E_1$  such  that  $d_{S\C}(E_1)=1$
  and  find  successive  spaces  $E_n$  while  maintaining  the  condition  $d_{S\C}(E_n)=1$  for  all  $n$.
  Then  the  idea  is      that  the  union  $X=\ovl{\cup  E_n}$  will  satisfy  the  extension  property
  in  Proposition  \ref{p1}  (that  is  equivalent  to  the  WEP  for  a  $C^*$-algebra),
while      the  condition  $d_{S\C}(E_n)=1$  for  all  $n$  will  imply
$d_{S\C}(X)=1$  and  hence  by  Proposition  \ref{p2}  the  purported  WEP  of    $X$  
  would  imply  its  LLP.  Just  starting  from  a  f.d.  operator  space  $E_1$
  with  exactness  constant  $>1$  will  ensure  that  $X$
  if  it  is  a  $C^*$-algebra  is  not  exact,  and  hence        not  nuclear.
  
Since  we  can  embed  $E_n$  into  an  injective  object,  namely  $B(H)$
(and  $\ell_\infty$  in  the  Banach  space  case)  it  is  easy  to  do  the  first  step  $n=1$  
and  to  find  some  ``big"  space  $E_2$  satisfying  the  extension  but
the  difficulty  is  to  find  $E_2$  still  such  that  $d_{S\C}(E_2)=1$.  The  latter  expresses  that
$E_2$  remains  relatively  ``small".
In  the  Banach  space  analogue  of  \cite{P0}  this  is  the  main  problem:  there  bounded  cotype
2  constants  are  the  key  tool  that  replaces  $d_{S\C}(E_n)=1$;  note  however
  that  the  construction  there  is  isomorphic  (with  uniformly  bounded  constants),  as  opposed  to  the  present  ``almost"  isometric  one  (i.e.  with  constants  asymptotically  tending  to  1).
  However,  it  is  actually  possible  to  essentially  proceed  and  
  maintain  the  condition  $d_{S\C}(E_n)=1$  for  all  $n$,  using  the  operator  space  analogue
  of  the  construction  in  \cite{P0},  but  in  the  completely isometric  setting,  with  all  relevant  constants
  equal  to  $1$.  This  led  to      operator  space  versions  of  our  main  result,
  that  we  obtained  already  a  few  years  ago  (we  gave  a  talk  on  this  at  MSRI  in  the  Fall  2016).  We plan to write the details in a separate paper.
  
  But this seemed like a dead end because it gave no clue how  to  arrange  so  that  the  union  $\ovl{\cup  E_n}$
  be  not  only  an  operator  space  but  a  genuine  $C^*$-algebra.  
  For  this  we  need  to  produce  embeddings
  $T_n  :  E_n  \to  E_{n+1}$  that  are    multiplicative,  or  at  least      
  close  to  multiplicative,  in  such  a  way  that  in  the  limit  we  obtain  
  an  algebra  and  multiplicative  maps. This  is now overcome  by    
  a  quite  different  construction  of  $E_{n+1}$  given  $E_n$  based  on  Lemma
  \ref{l2}  below. 
  We construct our example as an inductive limit of f.d. operator spaces,
  with linking maps that are close to multiplicative in some suitable sense.
  Similar asymptotic morphisms 
  already appear in the $E$-theory of Connes and Higson \cite{CoH} but
  our use of them seems unrelated.
  There are many examples  of inductive limits of 
            $C^*$-algebras  in the literature, see for example \cite{BK97}. 
  See also \cite[p. 465]{Bla}, \cite[ch. 6]{LauR} or \cite{Lor}
  for general background on inductive limits.

\section{Approximately multiplicative maps}

We will need to use liftings that are approximately multiplicative.
This will be provided by the use of the cone algebra $C_0(\C)$, but it
seems worthwhile to first
  collect some
            basic general facts about almost multiplicative mappings.
            The latter facts are known (the ideas go back at least to \cite{CoH})
            but do not seem to have been recorded
            in the literature.
                       \begin{dfn}\label{rd2} Let $B,B_0$ be $C^*$-algebras.
            Let $ \cl E_0\subset B$ be a self-adjoint subspace   and let $\vp_0>0$. A linear map ${\psi}: \cl E_0 \to B_0$ 
            (we will restrict in (iii) to the s.a. case for simplicity)
            will be called an $\vp_0$-morphism
            if  \begin{itemize}
             \item[{\rm (i)}] $\|{\psi}\|\le 1+\vp_0$,
             \item[{\rm (ii)}]
        for any $x,y\in \cl E_0$ with $xy\in \cl E_0$ we have
          $\|{\psi}(xy) -{\psi}(x){\psi}(y)\|\le \vp_0 \|x\|\|y\|,$
          \item[{\rm (iii)}]
          ${\psi}$ is self-adjoint i.e. for any $x\in \cl E_0$    we have        ${\psi}(x^*) ={\psi}(x)^*$.
           \end{itemize}
           \end{dfn}
          \begin{rem} 
          Let ${\psi}_0: \cl E_0 \to B_0$ be a linear map.
          Let $\cl E_1 \subset B_0$ be another self-adjoint subspace such that 
          ${\psi}_0(\cl E_0) \subset \cl E_1$ and
          $ {\psi}_0(\cl E_0){\psi}_0(\cl E_0) \subset \cl E_1 $.
        Let
           $B_1$ be another
        $C^*$-algebra.
          If ${\psi}_0: \cl E_0 \to B_0$ is an $\vp_0$-morphism
          and ${\psi}_1: \cl E_1\to B_1$ an $\vp_1$-morphism ($\vp_1>0$), then
          ${\psi}_1{\psi}_0: \cl E_0 \to B_1$ is an $\vp_0\|{\psi}_1\|+\vp_1\|{\psi}_0\|^2$-morphism,
          and hence a
          $\d_1$-morphism with
         \begin{equation}  \label{ee3'}\d_1\le \vp_0(1+\vp_1) +\vp_1 (1+\vp_0)^2
         .\end{equation}          This is immediate
          since
          $$ {\psi}_1{\psi}_0(xy)-{\psi}_1{\psi}_0(x){\psi}_1{\psi}_0(y)=
          {\psi}_1({\psi}_0(xy)-{\psi}_0(x){\psi}_0(y)) +{\psi}_1 ({\psi}_0(x){\psi}_0(y)) -{\psi}_1{\psi}_0(x){\psi}_1{\psi}_0(y).$$
          More generally if
          we have a similar composition ${\psi}_n\cdots {\psi}_1{\psi}_0$
          with ${\psi}_j$ being an $\vp_j$-morphism, 
          then ${\psi}_n\cdots {\psi}_1{\psi}_0: \cl E_0 \to B_n$ has norm 
          $\le (1+\vp_n)\cdots(1+\vp_1)(1+\vp_0)$.
          Let  $p_n= (1+\vp_n)\cdots (1+\vp_0)$.
          Using  \eqref{ee3'}, a simple induction shows that  ${\psi}_n\cdots {\psi}_1{\psi}_0$ is a $\d_n$-morphism
          with $\d_n$ satisfying :
          $$\d_n\le \d_{n-1} (1+\vp_n) + \vp_{n}  p_{n-1}^2.$$
          Thus the number $\d_n'=\d_n p_n^{-1}$
          satisfies
          $$\d'_n\le \d'_{n-1}+   \vp_{n}  p_{n-1}^2/p_n\le  \d'_{n-1}+   \vp_{n}  p_{n-1} .$$
          Thus if we assume that   $\sum \vp_n<\infty$ so that $\sup_n p_n= c<\infty$ we find
          $$\d_n'\le \d_0' +c\sum\nl_1^n \vp_{k} $$
          and hence for any $n\ge 1$
           \begin{equation}  \label{ee3}\d_n\le p_n (\d_0' +c\sum\nl_1^n \vp_{k} )\le c \vp_0+c^2 \sum\nl_1^n \vp_{k}.\end{equation} 
           \end{rem}
           
            It will be convenient to work with
            a general directed set $I$. 
            Given a family $(C_\alpha)$ of $C^*$-algebras, we denote by $\ell_\infty(I, \{C_\alpha\})$
            the $C^*$-algebra formed  of all the bounded families $(x_\a)$ (we view these as ``generalized sequences")
             with $x_\a\in C_\a$ for all $\a\in I$, equipped with the sup-norm.
            We denote by $c_0(I, \{C_\alpha\})$ the ideal formed of all $x=(x_\a)$
            such that  $\limsup\nl_\a\| x_\a\|=0$. As usual if $$Q: \ell_\infty(I, \{C_\alpha\})
            \to \ell_\infty(I, \{C_\alpha\})/c_0(I, \{C_\alpha\})$$ denotes the quotient map,
            then for any $x\in \ell_\infty(I, \{C_\alpha\})$ we have
            $$\|Q(x)\|= \limsup\nl_\a\| x_\a\|.$$

           The interest of $\vp$-morphisms is illustrated by the following simple lemma, that we will use only toward the end of this paper.
           
           \begin{lem}\label{ll5} Let $B,D$ be $C^*$-algebras. 
           Let $E\subset B$, $F \subset D$ be  f.d.s.a. subspaces. 
           For any $\d>0$ there is $\vp>0$ and 
             a f.d.s.a. superspace  $\cl E$ with  $E\subset \cl E\subset B$ 
           such that for any 
           $\vp$-morphism $\psi: \cl E \to C$ ($C$ any other $C^*$-algebra) we have
           $$\forall x\in E \otimes F\quad \|(\psi\otimes Id_D)(x)   \|_{C\otimes_{\max} D}\le (1+\d) \|x\|_{B\otimes_{\max} D}.$$
           \end{lem}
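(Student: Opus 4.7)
\emph{Plan.} I would argue by contradiction, converting a net of bad $\vp$-morphisms into an honest $*$-homomorphism via an ultraproduct. Suppose the conclusion fails for some $\delta>0$. Index counterexamples by the directed set $I$ of pairs $i=(\cl E_i,\vp_i)$ with $E\subset \cl E_i\subset B$ finite-dimensional self-adjoint and $\vp_i>0$, ordered so that $i\le i'$ iff $\cl E_i\subset \cl E_{i'}$ and $\vp_{i'}\le \vp_i$. For each $i$ we get an $\vp_i$-morphism $\psi_i:\cl E_i\to C_i$ and $x_i\in E\otimes F$ with $\|x_i\|_{B\otimes_{\max}D}\le 1$ but $\|(\psi_i\otimes Id_D)(x_i)\|_{C_i\otimes_{\max}D}>1+\delta$. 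Fix a free ultrafilter $\mathcal U$ on $I$ refining the order filter.

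The main step is to assemble $(\psi_i)$ into a contractive $*$-homomorphism $\Psi:B\to C_{\mathcal U}$, where $C_{\mathcal U}=\prod_{\mathcal U}C_i$ is the $C^*$-algebra ultraproduct. For each $b\in B$, cofinality makes $\{i:b\in\cl E_i\}$ an element of $\mathcal U$, so $\Psi(b):=[(\psi_i(b))]_{\mathcal U}$ is well-defined with $\|\Psi(b)\|\le\|b\|$ by Definition~\ref{rd2}(i). Property (iii) gives $\Psi(b^*)=\Psi(b)^*$, and (ii) together with $\vp_i\to 0$ along $\mathcal U$ yields $\Psi(b_1b_2)=\Psi(b_1)\Psi(b_2)$, since for fixed $b_1,b_2$ we eventually have $b_1,b_2,b_1b_2\in\cl E_i$ and $\|\psi_i(b_1b_2)-\psi_i(b_1)\psi_i(b_2)\|\le\vp_i\|b_1\|\|b_2\|$.

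Because $\Psi$ is a $*$-homomorphism, $\Psi\otimes Id_D:B\otimes_{\max}D\to C_{\mathcal U}\otimes_{\max}D$ is contractive. A standard representation-theoretic argument (take a near-optimal $*$-representation $\pi_i$ of each $C_i\otimes_{\max}D$ and form its ultraproduct via the commuting pair of induced $*$-representations of $C_{\mathcal U}$ and $D$) shows that the natural ``diagonal'' linear map
$$C_{\mathcal U}\otimes_{\max}D\longrightarrow \prod\nolimits_{\mathcal U}(C_i\otimes_{\max}D)$$
is contractive, so that $\lim_{\mathcal U}\|(\psi_i\otimes Id_D)(y)\|_{C_i\otimes_{\max}D}\le\|(\Psi\otimes Id_D)(y)\|_{C_{\mathcal U}\otimes_{\max}D}$ for any $y\in B\otimes D$. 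By compactness of the norm-closed unit ball of the finite-dimensional space $E\otimes F$, $x_\infty:=\lim_{\mathcal U}x_i\in E\otimes F$ exists with $\|x_\infty\|_{B\otimes_{\max}D}\le 1$, and the two estimates above combine to give $\lim_{\mathcal U}\|(\psi_i\otimes Id_D)(x_\infty)\|_{C_i\otimes_{\max}D}\le 1$.

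To finish I would swap $x_\infty$ for $x_i$ inside this limit. Fixing a basis $(e_k)_{k=1}^m$ of $E$ and writing $x_i-x_\infty=\sum_k e_k\otimes f_{i,k}$ with $f_{i,k}\in F$, finite-dimensionality of $E\otimes F$ forces $f_{i,k}\to 0$ along $\mathcal U$ for each $k$; together with $\|\psi_i(e_k)\|\le 2\|e_k\|$ this yields $\|(\psi_i\otimes Id_D)(x_i-x_\infty)\|_{C_i\otimes_{\max}D}\to 0$ along $\mathcal U$, contradicting the standing lower bound $>1+\delta$. The delicate point I expect to have to verify carefully is the contractivity of the diagonal map displayed above --- i.e., that an ultraproduct of $*$-representations of the $C_i\otimes_{\max}D$ does provide a genuine $*$-representation of $C_{\mathcal U}\otimes_{\max}D$ with the right norm bound; once that is in hand, everything else is either formal bookkeeping or immediate from Definition~\ref{rd2}.
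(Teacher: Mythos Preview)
Your argument is correct and is essentially the same approach as the paper's: both assemble the family of $\vp$-morphisms into a genuine $*$-homomorphism from $B$ into a ``limit'' $C^*$-algebra (you use the ultraproduct $\prod_{\mathcal U} C_i$, the paper uses $\ell_\infty(I;\{C_\alpha\})/c_0(I;\{C_\alpha\})$), then invoke the contractivity of $\Psi\otimes Id_D$ for the max tensor product and pass back to the individual $C_i\otimes_{\max}D$. The only real difference is in how the ``delicate point'' is handled: the paper obtains your diagonal map by citing the exactness of the max tensor product together with the obvious contraction $[\ell_\infty\otimes_{\max}D]/[c_0\otimes_{\max}D]\to \ell_\infty(\{C_\alpha\otimes_{\max}D\})/c_0(\{C_\alpha\otimes_{\max}D\})$, whereas you build the needed representation of $C_{\mathcal U}\otimes_{\max}D$ directly from an ultraproduct of faithful representations of the $C_i\otimes_{\max}D$; and the paper argues directly (finite $\delta$-net in the unit ball of $E\otimes F$) rather than by contradiction with a limit $x_\infty$.
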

           \begin{proof} Let $I=\{ (\cl E,\vp)\}$ be the directed set of pairs with $E\subset \cl E$, $\vp>0$. Fix $\d>0$.
           It suffices to show  that there is $(\cl E,\vp)\in I$ such that 
           for all $C$, all
           $\vp$-morphisms $\psi: \cl E \to C$   and all   $x\in E \otimes F$ we have
           $ \|(\psi\otimes Id_D)(x)   \|_{C\otimes_{\max} D}\le (1+\d) \|x\|_{B\otimes_{\max} D}$.
           For each $\alpha\in I$ with $E\subset \cl E_\alpha$,
           there is a $C_\alpha$ and
           an $\vp_\alpha$-morphism $\psi_\alpha: \cl E_\alpha \to C_\alpha$,
           such that
           $$\|(\psi_\alpha \otimes Id_D)(x)\|_{C_\alpha \otimes_{\max} D} \ge (1+\d)^{-1} \sup \|(\psi  \otimes Id_D)(x)\|_{C \otimes_{\max} D}$$ 
           where the last supremum runs over all $C$ and all
           $\vp_\alpha$-morphism $\psi: \cl E_\alpha \to C $.
           Note that this last supremum is finite since the nuclear norm
           of each $\psi:\cl E_\alpha \to C $ is at most
           $(1+\vp_\a) \dim(\cl E_\alpha) $.
         Let  $\psi'_\alpha : B \to C_\alpha$ be the map that extends $\psi_\alpha$   by $0$ outside $\cl E_\alpha$ (we could use a linear map but this is not needed at this point).
         Consider $\psi'=(\psi'_\alpha): B \to  \ell_\infty (I; \{C_\alpha\})$ and 
         let $Q: \ell_\infty (I; \{C_\alpha\})\to \ell_\infty (I; \{C_\alpha\})/c_0(I; \{C_\alpha\})$ be the quotient map.
         Then $\pi=Q \psi': B \to \ell_\infty (I; \{C_\alpha\})/c_0(I; \{C_\alpha\})$ is clearly an isometric $*$-homomorphism.
         We have contractive morphisms
         $$\pi  \otimes Id_D : B\otimes_{\max} D \to [\ell_\infty (I; \{C_\alpha\})/c_0(I; \{C_\alpha\})]\otimes_{\max} D = [\ell_\infty (I; \{C_\alpha\})\otimes_{\max} D]/[c_0(I; \{C_\alpha\})\otimes_{\max} D]$$
         where the last $=$ holds by the ``exactness" of the max-tensor product (see e.g. \cite[p. 285]{P4}).
         Moreover, we have clearly a contractive morphism 
         $$  [\ell_\infty (I; \{C_\alpha\})\otimes_{\max} D]/[c_0(I; \{C_\alpha\})\otimes_{\max} D]
         \to  [\ell_\infty (I; \{C_\alpha \otimes_{\max} D\}) ]/[c_0(I; \{C_\alpha\otimes_{\max} D\}) ]. $$
         Since   $\pi(e)= Q ( (\psi_\alpha(e))_\a )$ for all $e\in E$ and since $x\in E \otimes D$, it follows that 
         $$\limsup\nl_\alpha  \|(\psi_\alpha \otimes Id_D)(x)\|_{C_\alpha \otimes_{\max} D} \le \|x\|_{B\otimes_{\max} D},$$
         which proves the desired result for each fixed given $x\in E \otimes D$.
         But since $E \otimes F$ is a finite dimensional subspace of ${B \otimes_{\max} D} $
         we may replace the unit ball by a finite $\d$-net in it 
         (or invoke Ascoli's theorem). We can deal with the latter case by
        enlarging $\cl E$ finitely many times. We obtain the announced result (possibly with $2\d$ instead of $\d$).\\
         A different proof can be obtained using the Blecher-Paulsen factorization, as   in \cite[Th. 26.8]{P4}.
             \end{proof}

 \begin{rem}\label{rr1}
           Consider a surjective $*$-homomorphism $q: C_1 \to B$ between $C^*$-algebras.
           It is well known that for any $\vp>0$ and any f.d. subspace
           $\cl E\subset B$ there is $\psi: \cl E \to C_1$ 
           with $\|\psi\|\le 1+\vp$ such that $q\psi(x)=x$ for any $x\in \cl E$
           (actually this even holds for $\vp=0$, see e.g. \cite[p. 46]{P4}).  \\     
           Thus,  if one replaces the c.b. norm by the usual one,
           the analogue of the LLP becomes universally valid, just like for local reflexivity (and the latter fact implies the former).
           \end{rem}
           However, we will need to work with quotient maps
           that admit a slightly stronger sort of lifting, as follows:
           \begin{dfn}\label{rd1} Let $q: C_1 \to  C_1/\cl I$ be a quotient $*$-homomorphism and let $B$ be a $C^*$-algebra.
           We say that  a $*$-homomorphism $\sigma: B \to C_1/\cl I$
       is ``almost multiplicatively locally liftable"  
        if    for any $\vp>0$ and any f.d.s.a. subspace
           $\cl E\subset B$ there is
           an $\vp$-morphism  $\psi: \cl E \to C_1$ 
             such that $\|q\psi(x)-\sigma (x)\| \le \vp\|x\|$ for any $x\in \cl E$.\\
           We will say that $q$ {almost allows liftings} 
           if  this holds for
           $B=C_1/\cl I$ and $\sigma=Id_B$ (or equivalently whenever $\sigma$ is an isomorphism). In that case, any $*$-homomorphism $\sigma: B \to C_1/\cl I$
       is almost multiplicatively locally liftable. 
           \end{dfn}
           
             Let  $q: C_1 \to C_1/\cl I$ be a surjective $*$-homomorphism.
               Let $I$ be any directed set.
               We denote by $q^\sharp:   \ell_\infty(I;C_1)/ c_0(I;C_1)\to \ell_\infty(I;C_1/\cl I)/ c_0(I;C_1/\cl I)$   the $*$-homomorphism
           associated to $q$ acting an each coordinate, and by
           $\nu: C_1/\cl I\to  \ell_\infty(I;C_1/\cl I)/ c_0(I;C_1/\cl I)$   the embedding
           associated to $Id_{C_1/\cl I}$ acting an each coordinate, or 
           equivalently   the map that takes $x\in C_1/\cl I$ to the 
           equivalence class of the function constantly equal to $x$ on $I$.
               
            \begin{pro}\label{pus1} The following properties of a   $*$-homomorphism $\sigma: B \to C_1/\cl I$ are equivalent.
            \item[{\rm (i)}] The map $\sigma$ is almost multiplicatively locally liftable.
                 \item[{\rm (ii)}] There is a directed set $I$ 
                 and a $*$-homomorphism 
                  $\pi:   B \to \ell_\infty(I;C_1)/ c_0(I;C_1)  $   (which will automatically be an embedding)
                  such that 
                  $q^\sharp \pi=\nu \sigma$.           
            \end{pro}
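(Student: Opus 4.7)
The equivalence rests on the fact that a $*$-homomorphism into $\ell_\infty(I;C_1)/c_0(I;C_1)$ is the same as a family $(\psi_\alpha)_{\alpha \in I}$ of maps $\psi_\alpha: B \to C_1$ whose linearity, multiplicativity and self-adjointness defects vanish along $I$, thanks to $\|Q(x)\| = \limsup_\alpha \|x_\alpha\|$. Each direction amounts to translating between local approximate morphisms and global exact ones through this quotient. The main pitfalls are two: in (i)$\Rightarrow$(ii), a set-theoretic ``extend by zero'' assignment is neither linear nor bounded globally, so one must verify that the passage to $c_0$ genuinely restores a $*$-homomorphism; in (ii)$\Rightarrow$(i), one must lift not on $\cl E$ itself but on a slightly enlarged subspace $\cl E'$, otherwise the product $xy$ with $x,y\in \cl E$ would not even lie in the domain of the lift.

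For (i)$\Rightarrow$(ii), take $I$ to be the directed set of pairs $\alpha = (\cl E_\alpha, \vp_\alpha)$ with $\cl E_\alpha \subset B$ a f.d.s.a. subspace and $\vp_\alpha \in (0,1]$, ordered by $(\cl E,\vp) \le (\cl E',\vp')$ iff $\cl E \subset \cl E'$ and $\vp' \le \vp$. Using (i), pick for each $\alpha$ an $\vp_\alpha$-morphism $\psi_\alpha: \cl E_\alpha \to C_1$ with $\|q\psi_\alpha(x)-\sigma(x)\| \le \vp_\alpha\|x\|$ on $\cl E_\alpha$, and extend it to $B$ set-theoretically by $\psi_\alpha(x)=0$ for $x \notin \cl E_\alpha$. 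The assignment $x \mapsto (\psi_\alpha(x))_\alpha$ lands in $\ell_\infty(I;C_1)$ since $\|\psi_\alpha(x)\| \le 2\|x\|$; composing with $Q$ defines $\pi$. For fixed $x,y,\lambda,\mu$, the set $\{x,y,\lambda x+\mu y,xy,x^*\}$ eventually lies in some $\cl E_\alpha$, so linearity, self-adjointness and multiplicativity of $\pi$ follow from those properties (exact or $\vp_\alpha$-approximate) of each $\psi_\alpha$: the defect $\|\psi_\alpha(xy)-\psi_\alpha(x)\psi_\alpha(y)\| \le \vp_\alpha\|x\|\|y\|$ is absorbed by $c_0$. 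The equation $q^\sharp\pi=\nu\sigma$ follows from $\|q\psi_\alpha(x)-\sigma(x)\|\to 0$. Finally, $\|\pi(x)\|=\limsup_\alpha \|\psi_\alpha(x)\| \ge \limsup_\alpha \|q\psi_\alpha(x)\| = \|\sigma(x)\|$, which yields the parenthetical embedding claim whenever $\sigma$ is injective.

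For (ii)$\Rightarrow$(i), fix $\vp>0$ and a f.d.s.a. subspace $\cl E \subset B$; enlarge it to $\cl E':= \cl E + \cl E\cdot\cl E$, still f.d.s.a. Applying Remark \ref{rr1} to $Q: \ell_\infty(I;C_1) \to \ell_\infty(I;C_1)/c_0(I;C_1)$ and the subspace $\pi(\cl E')$, and precomposing with $\pi|_{\cl E'}$, I obtain a linear lift $\tilde\psi: \cl E' \to \ell_\infty(I;C_1)$ of $\pi|_{\cl E'}$ with $\|\tilde\psi\| \le 1+\vp$; symmetrizing via $x \mapsto \tfrac12(\tilde\psi(x)+\tilde\psi(x^*)^*)$ makes it self-adjoint without increasing its norm. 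Writing $\tilde\psi=(\tilde\psi_\beta)_\beta$, each $\tilde\psi_\beta:\cl E'\to C_1$ is linear, self-adjoint and of norm $\le 1+\vp$. Because $\pi$ is multiplicative and $q^\sharp\pi=\nu\sigma$, the families $(\tilde\psi_\beta(xy) - \tilde\psi_\beta(x)\tilde\psi_\beta(y))_\beta$ for $x,y\in \cl E$ (so that $xy\in \cl E'$) and $(q\tilde\psi_\beta(x)-\sigma(x))_\beta$ for $x\in \cl E'$ both lie in $c_0$. Since $\cl E \otimes \cl E$ and $\cl E'$ are finite-dimensional and the associated maps are uniformly bounded in $\beta$, pointwise convergence upgrades to uniform convergence on bounded sets, so some $\beta$ realizes both error bounds at level $\vp$. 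Then $\psi := \tilde\psi_\beta|_{\cl E}$ is the required $\vp$-morphism, completing the proof.
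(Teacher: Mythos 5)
Your proof is correct and follows essentially the same route as the paper's: the same directed set of pairs $(\cl E_\alpha,\vp_\alpha)$ with an extend-by-zero family pushed through the quotient by $c_0$ for (i)$\Rightarrow$(ii), and for (ii)$\Rightarrow$(i) the same enlargement $\cl E+\mathrm{span}[\cl E\cl E]$, lifting via Remark \ref{rr1}, symmetrization, and a finite-dimensionality/compactness argument to pick a single coordinate $\beta$. The only differences (set-theoretic rather than linear zero-extension, norm $1+\vp$ rather than $1$ in the lift) are cosmetic and do not affect the argument.
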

            $$\xymatrix{&B\ar[d]^{\sigma} \ar[rr]^{\pi \qquad} && \ell_\infty(I;C_1)/ c_0(I;C_1) \ar[d]^{q^\sharp}  \\
&  {C_1/\cl I} \ar[rr]^{ \nu  \qquad}&& \ell_\infty(I;{C_1/\cl I})/ c_0(I;{C_1/\cl I})}$$
\begin{proof} Assume (i). Let $I$ be the directed set formed of all
           pairs $\alpha=(\cl E,\vp)$ where $\cl E$ is a  f.d.s.a. subspace of $B$ and $\vp>0$,
           equipped with the usual ordering so that
           $\alpha\to \infty$ in the corresponding net means that $\cl E\to B$ and $\vp\to 0$.
           Let       $\alpha=(\cl E,\vp)$ be such a pair. 
Let $\psi: \cl E \to C_1$
             be an $\vp$-morphism such that $\|q\psi(x)-\sigma x\| \le \vp\|x\|$ for any $x\in \cl E$.
           Then we define a linear map $\psi_\alpha: B \to C_1$   by setting
            $\psi_\alpha(x)=\psi(x)$ if $x\in \cl E$
           and (say) $\psi_\alpha(y)=0$ whenever $y$ belongs to a complementary (to $\cl E$) subspace
           that we can choose arbitrarily.    
           We denote by
            \begin{equation}  \label{ee4}\pi: B \to \ell_\infty(I;C_1)/ c_0(I;C_1),\end{equation} 
            the mapping 
           that takes $x\in B$ to  $(\psi_\alpha(x))$ modulo $c_0(I;C_1)$.
            It is easy to check that $\pi$ is
           an isometric  $*$-homomorphism   such that 
                  $q^\sharp \pi=\nu\sigma$.\\
                  Conversely, assume (ii). 
                  Let $Q: \ell_\infty(I;C_1) \to \ell_\infty(I;C_1)/ c_0(I;C_1)$
                  be the quotient map. Let $\cl E\subset B$ be a f.d.s.a. subspace.
                  We set $\hat {\cl E}= \cl E + {\rm span}[\cl E\cl E]$.
                  By Remark \ref{rr1},
                there is $\psi: \hat{\cl E} \to \ell_\infty(I;C_1)$ 
           with $\|\psi\|\le 1 $ such that $Q\psi(x)=\pi(x)$ for any $x\in \hat{\cl E}$.
           Replacing $\psi $ by $(\psi+\psi_*)/2$ we may assume $\psi$ s.a.
           Let $\psi_\a: \hat{\cl E} \to C_1$ be the coordinates of $\psi$ so that $\psi=(\psi_\a)$. Note $q^\sharp Q\psi(x)=q^\sharp \pi(x)=\nu\sigma (x)$ for any $x\in \hat{\cl E}$. Equivalently, if we set $\nu_\a(x)=x$ for any $x\in C_1/\cl I$, we have
           $(q \psi_\a(x))_\a -(\nu_\a\sigma (x))_\a \in c_0(I;C_1/\cl I)$, which means
           $\limsup_\a\| q \psi_\a(x)-\sigma (x)\|=0$ for any $x\in \hat{\cl E}$.
           A fortiori this holds for any $x\in  \cl E$.
           Now, since   $\cl E\cl E\subset \hat{\cl E}$, if $x,y \in \cl E$ we have
           $Q (\psi(xy)-\psi(x)\psi(y))= \pi(xy)-\pi(x)\pi(y) =0$, which means
           $\limsup_\a\| \psi_\a(xy)-\psi_\a(x)\psi_\a(y)\|=0$.
           Thus choosing $\a$ large enough (and invoking the compactness
           of the unit ball of $\hat{\cl E} $) we find 
           $\psi=\psi_\a$ satisfying the properties required to check (i).
\end{proof}

           \begin{rem}\label{rr2}
           It is   known that   a general $q$ does not almost allow liftings.
           Indeed, 
         if $q$ is the surjection from $\B$ to the Calkin algebra
         and if $S\in \B$ is the shift then the subspace $\cl E$ spanned
         by $\{q(1),q(S),q(S^*)\}$ does not satisfy the local lifting
         described in Definition \ref{rd1}. This can be checked
         using the Fredholm index.
         According to D. Voiculescu who  kindly explained it to me, this kind of example    was known around the time of the Brown-Douglas-Filmore theorem.
         
           However, it is   known that    the associated
           surjective $*$-homomorphism $\hat q: C_0(C_1) \to C_0({C_1/\cl I})$
           (extended to the cone algebras) almost allows liftings.
           This is the content of  Lemma \ref{ll7} below.
           \\ For a general $q$, Remark  \ref{rr1} gives us only a linear 
           isometric embedding $\pi$.           
           \end{rem} 
           \section{The cone algebra}

Let  $C_0=C((0,1])$  and  $C=C([0,1])$.  For  any  $C^*$-algebra  $A$,
  we  denote  by  $C_0(A)=  C_0  \otimes_{\min}  A$  the  so-called  cone  algebra
  of  $A$.  
    When  dealing  with  a  mapping  $u:  A  \to  B$  between  $C^*$-algebras
  (or  operator  spaces)  we  will  denote  by  $u_0:  C_0(A)  \to  C_0(B)$
  the  map  extending    $Id_{C_0}  \otimes  u$.
We  denote  by  $\ell_\infty(A)  $
the  $C^*$-algebra  formed  of  all  bounded  sequences  of  elements  of  $A$,
and  by  $c_0(A)  \subset  \ell_\infty(A)$  the  ideal  formed  by  
the  sequences  that  tend  to  $0$.

Let $q: C \to B$ be a surjective $*$-homomorphism and let $\I=\ker(q)$.
Let  $(\sigma_n)$  be  a  quasicentral  approximate  unit  in  $\I$.
Our  construction  would  be  much  simpler  of  we  could
find  $(\sigma_n)$   (and hence $(1-\sigma_n)$)  formed  of  projections.
Then  the  mappings  $x\mapsto  (1-\sigma_n ) x$  would  be  approximatively  multiplicative.
The next lemma  somehow  produces  a  way  to  go  around  that  difficulty  by  passing  to  the  cone  algebras. 
It is closely   related  to    Kirchberg's  \cite[\S  5]{Kir},
but    we  learnt it from  \cite[Lemma  13.4.4]{BO}.  A similar idea already
appears in \cite[Lemma 10]{CoH} for the suspension algebra in the context of
approximatively  multiplicative families indexed by a continuous parameter in $(0,\infty)$.

            \begin{lem}\label{ll7}
            Let $q: C \to B$ be a surjective $*$-homomorphism between $C^*$-algebras.
            Let $q_0: C_0(C) \to C_0(B)$ be the associated one on $C_0(C)=C_0\otimes_{\min} C$.
            Then $q_0$   {almost allows liftings}.
            \end{lem}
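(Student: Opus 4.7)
The proof follows the scheme of Brown and Ozawa \cite[Lemma 13.4.4]{BO}, which adapts an idea of Kirchberg: the continuous parameter $t\in(0,1]$ provided by the cone lets one interpolate between ordinary (non-multiplicative) linear lifts of increasing ``quality,'' producing in the limit an $\vp$-morphism on any prescribed f.d.s.a.\ subspace.

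\emph{Setup and the path of approximate-unit elements.} Fix a f.d.s.a.\ $\cl E\subset C_0(B)$ and $\vp>0$. Identifying $C_0(B)$ with continuous functions $[0,1]\to B$ vanishing at $0$, I may assume after a small enlargement that $\cl E\subset F\otimes E$ for some f.d.\ $F\subset C_0((0,1])$ and f.d.s.a.\ $E\subset B$ large enough that $g_1(t)g_2(t)\in E$ whenever $g_1,g_2\in\cl E$ with $g_1g_2\in\cl E$. By Remark \ref{rr1} (averaging with $x\mapsto s(x^*)^*$ for self-adjointness), pick a contractive s.a.\ linear lift $s\colon E\to C$ with $qs=\mathrm{Id}_E$; tensoring with $\mathrm{Id}_{C_0}$ yields a contractive s.a.\ linear lift $s_0\colon\cl E\to C_0(C)$ of the inclusion, whose only defect is multiplicative, with $s_0(g_1g_2)-s_0(g_1)s_0(g_2)\in C_0(\cl I)$ for $\cl I:=\ker q$. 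Next, by Arveson's theorem and separability, extract from a quasicentral approximate unit of $\cl I$ a sequence $(h'_n)$ with very fast commutator decay on $s(E)$ and very fast defect-action decay on the closure of the additive defect of $s$; then replace $h'_n$ by $h_n:=\chi_n(h'_n)$ for $\chi_n\colon[0,1]\to[0,1]$ a continuous step-approximation of $\mathbf 1_{[2^{-n},1]}$. By the operator inequality $(1-\chi_n)(h'_n)\le c_n(1-h'_n)$ and the Lipschitz-commutator bound $\|[\chi_n(h'_n),x]\|\le\mathrm{Lip}(\chi_n)\|[h'_n,x]\|$, the new $(h_n)$ retains quasicentrality and the approximate-unit-on-the-defect-ideal property while gaining the crucial ``near-projection'' estimate $\|h_n(1-h_n)\|\le 2^{-n}$. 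Finally, assemble a continuous path $h\colon[0,1]\to\cl I$ with $h(0)=0$, $h(1/n)=h_n$, linearly interpolated between successive values.

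\emph{The lift and its verification.} Define
\[\psi(g)(t)\;:=\;(1-h(t))^{1/2}\, s_0(g)(t)\,(1-h(t))^{1/2}\quad(t\in(0,1]),\qquad \psi(g)(0):=0,\]
which lies in $C_0(C)$ because $g(0)=0$. Contractivity, self-adjointness, and the strict lifting $q_0\psi=\mathrm{Id}_{\cl E}$ (since $h(t)\in\cl I$ forces $q((1-h(t))^{1/2})=1$) are immediate. Approximate multiplicativity follows by expanding $\psi(g_1g_2)(t)-\psi(g_1)(t)\psi(g_2)(t)$ as
\[(1-h(t))^{1/2}\bigl[(s_0(g_1g_2)-s_0(g_1)s_0(g_2))(t)+s_0(g_1)(t)\,h(t)\,s_0(g_2)(t)\bigr](1-h(t))^{1/2}\]
and controlling the two brackets separately: the first lies in $\cl I$ and is absorbed by the approximate-unit estimate, while the second, after using quasicentrality to move $h(t)$ past $s_0(g_1)(t)$ and then $(1-h(t))^{1/2}$ past $s_0(g_2)(t)$ (commutator errors controlled by our choice of $(h_n)$), reduces to $h(t)(1-h(t))\,s_0(g_1)(t)s_0(g_2)(t)$ and is absorbed by the near-projection estimate $\|h(t)(1-h(t))\|\le 2^{-n}$ for $t\in[1/(n+1),1/n]$. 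Combining, $\|\psi(g_1g_2)-\psi(g_1)\psi(g_2)\|\le\vp\|g_1\|\|g_2\|$ once $(h_n)$ is chosen sufficiently far out.

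\emph{Main obstacle.} The content is not the assembly of $\psi$ but the simultaneous arrangement of the three estimates on $(h_n)$. The near-projection bound $\|h_n(1-h_n)\|\to 0$ fails for a generic approximate unit (in general only $\|h(1-h)\|\le 1/4$), and the functional-calculus fix via a steep step function $\chi_n$ threatens quasicentrality because steep Lipschitz functions blow up commutators. Balancing $\mathrm{Lip}(\chi_n)$ against the commutator decay of $h'_n$ is precisely Kirchberg's trick, and is why the cone algebra, rather than $C$ itself, is forced on us: it lets the errors be absorbed into the parameter $t$ (decaying like $2^{-n}$ near $0$) instead of being required uniformly on $C$.
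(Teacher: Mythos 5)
Your construction has a genuine gap at its central step, and it is not a repairable detail: the conjugation ansatz $\psi(g)(t)=(1-h(t))^{1/2}s_0(g)(t)(1-h(t))^{1/2}$ forces you to find, at every $t$ bounded away from $0$, an element $h(t)\in\cl I$ that is \emph{simultaneously} (a) almost commuting with the relevant $s_0(g)(t)$, (b) a near-unit on the multiplicative defect $d(t)=s_0(g_1g_2)(t)-s_0(g_1)(t)s_0(g_2)(t)\in\cl I$, and (c) a near-projection, $\|h(t)(1-h(t))\|$ small --- (c) is unavoidable because the product of two lifts carries $(1-h)$ twice while the lift of the product carries it once. But (a)+(b)+(c) together are exactly the obstruction of Remark \ref{rr2}: if such $h$ existed for every quotient map and every f.d. set, then $x\mapsto (1-h)^{1/2}s(x)(1-h)^{1/2}$ would show that \emph{every} $q$ almost allows liftings with no cone at all, contradicting the Calkin/index example. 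Concretely, both of your devices for (c) fail: $h_n=\chi_n(h'_n)$ satisfies $\|h_n(1-h_n)\|=\sup_{s\in\sigma(h'_n)}\chi_n(s)(1-\chi_n(s))$, which is about $1/4$ as soon as $\sigma(h'_n)$ meets the transition window of the continuous function $\chi_n$ (and nothing lets you avoid this); and even if each $h_n$ were a near-projection, the linear interpolation destroys the property, since midway between two near-projections differing by roughly a projection one has $\|h(1-h)\|\approx 1/4$ --- indeed along a norm-continuous path of uniform near-projections the spectral projections $\chi_{>1/2}(h(t))$ vary norm-continuously, so such a path cannot join a ``small'' element to one acting as a near-unit on a nonzero defect. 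The vanishing of $\cl E$ at $t=0$ only rescues a fixed initial interval $[0,t_0]$; the problematic interpolation intervals cover all of $[t_0,1]$, where $\|g(t)\|$ is not small, so the claimed bound $\|h(t)(1-h(t))\|\le 2^{-n}$ on $[1/(n+1),1/n]$ is not available and the multiplicativity estimate collapses. (The bound $\|[\chi_n(h'_n),x]\|\le \mathrm{Lip}(\chi_n)\|[h'_n,x]\|$ is also not valid as stated, though that part could be repaired; the separability you invoke is likewise not assumed in the lemma, but the local statement makes that harmless.)

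The proof in the paper (and the argument of \cite[Lemma 13.4.4]{BO} that you cite) avoids this trap precisely by \emph{not} conjugating a fixed linear lift. One defines $\rho_\alpha$ on $C_0\otimes C$ by $\rho_\alpha(f\otimes c)(t)=f(t(1-\sigma_\alpha))\,c$, i.e.\ one rescales the cone variable by $1-\sigma_\alpha$. On monomials the powers of $(1-\sigma_\alpha)$ automatically match the degrees, so asymptotic multiplicativity needs only quasicentrality and \emph{no} near-projection property; the cone enters through $f(0)=0$, which gives $\limsup_\alpha\|\rho_\alpha(f)\|=0$ for $f\in C_0\otimes\cl I$, so the induced $*$-homomorphism into $\ell_\infty(C_0(C))/c_0(C_0(C))$ factors through $C_0(B)$, and Proposition \ref{pus1} then converts this into the local $\vp$-morphism liftings. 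If you wish to keep a ``weighted lift'' formula, the weight must depend on the degree in $t$ exactly as in this substitution; that degree-matching is the idea your proposal is missing.
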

            \begin{proof} Let $\cl I=\ker(q)$.
            Let $(\sigma_\a)$ be a net forming a quasicentral approximate unit of $\cl I$, indexed by a directed set $I$. 
            This means $\sigma_\a\ge0$, $\|\sigma_\a\| \le 1$,
            $\|\sigma_\a x-x\|\to 0$ for any $x\in \cl I$ and
            $\|\sigma_\a c-c\sigma_\a\|\to 0$ for any $c\in C$.
            We identify $C_0(C)=C_0\otimes_{\min} C$ with the set of $C$ valued functions
            $f: [0,1] \to C$ such that $f(0)=0$. 
            Note that
            $$ C_0(C)/C_0(\cl I)= C_0( C/\cl I)=C_0(B).$$
            The set of polynomials
            $\cl P_0={\rm span}[t^n\mid n>0]$ is dense in $C_0$.
            Let $\rho_\a: C_0 \otimes C \to C_0(C)$ be the map taking 
            $t\mapsto f(t) c$ ($f\in C_0,c\in C$) to
            $t\mapsto f(t(1-\sigma_\a)) c$. For instance (monomials)
            $\rho_\a$ takes $t\mapsto t^n c$ to
            $t\mapsto t^n (1-\sigma_\a)^n c$.
            Note $\|1-\sigma_\a\|\le 1$ therefore for any $f\in C_0$ the function
            $t\mapsto f(t(1-\sigma_\a))$ is in $C_0(C)$
            with norm $\le \|f\|_{C_0 }$. This shows that
            $\sup_\a\|\rho_\a(y)\|<\infty$ for any $y\in C_0\otimes C$, so that $(\rho_\a)$ defines
            a map $\rho: C_0 \otimes C \to \ell_\infty(C_0(C))$.
           Let $$  \cl L(C_0(C))=\ell_\infty(C_0(C))/  c_0(C_0(C)),$$
           and similarly for $C_0(B)$. Let $Q_C: \ell_\infty(C_0(C)) \to \cl L(C_0(C))$ be the quotient map and similarly for $Q_B$.
             Since $\sigma_\a\in \cl I$
             we have for all $y\in \cl P_0\otimes C$ (and hence all $y\in \cl C_0\otimes C$)
              \begin{equation}\label{us8} \limsup\nl_\a \| q_0\rho_\a (y) - q_0(y)\| =0. \end{equation}
            Since $\sigma_\a$ is quasicentral we have
            $$\forall x,y\in C_0\otimes C\quad 
            \limsup\nl_\a\|
            \rho_\a(xy)-\rho_\a(x)\rho_\a(y)\|=0,$$
            indeed this reduces to the case of monomials which is obvious,
            and also $$\limsup\nl_\a\|\rho_\a(x)^*-\rho_\a(x^*)\|=0.$$
            It follows that after composing $\rho$ by the quotient map
            $Q_C: \ell_\infty(C_0(C)) \to \cl L(C_0(C))$ we obtain
            a  map 
            $\hat\pi : C_0 \otimes C \to \cl L(C_0(C))$ which
            is a $*$-homomorphism, such that $\hat\pi=Q_C\rho$ on $C_0\otimes C$.
            Since $C_0$ is nuclear, this  extends to a $*$-homomorphism
            (still denoted by $\hat\pi$)  defined on the whole of 
            $C_0(C)$, whence
             $\hat\pi: C_0(C) \to \cl L(C_0(C)) .$
     We have
             \begin{equation}\label{us9}\forall f \in C_0\otimes \cl I\quad \limsup\nl_\a\|
            \rho_\a(f)\|=0 ,\end{equation}
            and hence  $\hat\pi( C_0\otimes \I)=Q_C\rho ( C_0\otimes \I)=0$.
            Therefore,  after passing to the quotient by $C_0\otimes \I \subset \ker(\hat\pi)$, we derive from $\hat\pi$
            a $*$-homomorphism $$\pi: C_0(B) \to  {\cl L}(C_0(C)) ,$$
            such that $\hat \pi= \pi q_0$.
            Let    $q^\sharp: {\cl L}(C_0(C)) \to {\cl L}(C_0(B))$ denote again
            the quotient map associated to $q_0$.
            We claim that  for any $b=q_0(y)\in C_0 \otimes B $ (with   $y\in C_0\otimes C)$)   we have           $q^\sharp \pi(b)= 
             \nu(b)$
            where $\nu(b)$ is as earlier the element
            of ${\cl L}(C_0(B))$ that is the equivalent class
            of the family $(b_\a)$ defined by $b_\a=b$ for all $\a$.
            Indeed, 
            denoting simply by $Id  \otimes q_0: \ell_\infty(C_0(C)) \to \ell_\infty(C_0(B))$ the coordinate wise extension of $q_0$, we have
              $q^\sharp Q_C=Q_B (Id  \otimes q_0)$.
              Moreover,  \eqref{us8} implies
            that
           $ Q_B (Id  \otimes q_0) \rho(y) =Q_B([q_0(y)])$
           where $[q_0(y)]$ denotes the constant family equal to $q_0(y)$,
           so that $Q_B([q_0(y)])=\nu(q_0(y))$.
               Thus         $q^\sharp \pi(b)= 
            q^\sharp \pi q_0(y)= q^\sharp \hat\pi(y)=
             q^\sharp Q_C\rho(y)
             =Q_B (Id  \otimes q_0) \rho(y)
            =\nu(q_0(y))=\nu(b)$.
           This completes the proof by Proposition \ref{pus1}.
             \end{proof}

 \begin{lem}\label{l5}
           Let $C,C_1,B$ be  $C^*$-algebras. We assume given 
           an injective (and hence isometric) $*$-homomorphism
           $i: C \to B$ and a surjective one $q: C_1 \to B$.  
           Thus
           we have $B=C_1/\cl I$ with $\cl I=\ker(q)$.
           Assume that $C$ has the LLP and that $q$ {almost allows liftings}.           Then for any pair of f.d.s.a. spaces $E\subset C$ and $\cl E\subset B$
           such that $i(E)\subset \cl E$ and for any $\vp>0$ there is
           an $\vp$-morphism ${\psi}: \cl E \to C_1$ such that 
           $\|\psi {i}_{|E}\|_{cb}\le 1+\vp$ that approximately   lifts $q$  on $\cl E$ in the sense
           that $\|q{\psi}(x)-x\|\le \vp \|x\|$ for any $x\in \cl E$.
           
           $$\xymatrix{ &&&{C_1}\ar@{>>}[dl]_{ q}  \\
&   
  C\ar@{^{(}->}[r]^{  \  \  i  \quad  }  &  B&\\
  & E\ar@{^{(}->}[u]\ar@{^{(}->}[r]^{  \  \  i_{|E}  \quad  }  &  \cl E\ar@{^{(}->}[u]\ar [uur]_{{\psi}} &}$$
           \end{lem}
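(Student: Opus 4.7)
The plan is to work inside the quotient $C^*$-algebra $\cl L := \ell_\infty(I;C_1)/c_0(I;C_1)$: the assumption that $q$ almost allows liftings will supply approximate multiplicativity on $\cl E$ via Proposition \ref{pus1}, while the LLP of $C$ will supply cb-norm control on $E$ via Proposition \ref{p3}. The main obstacle is forcing a single map $\psi$ to satisfy both conditions simultaneously: a priori an $\vp$-morphism lifting $\cl E \hookrightarrow B$ (coming from the almost-lifting hypothesis) has no reason to pull back to a near cb-contraction on $E$, and a lifting with small cb-norm on $E$ (coming from the LLP) has no reason to extend multiplicatively to $\cl E$.

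Applying Proposition \ref{pus1} with $\sigma = Id_B$, the hypothesis that $q$ almost allows liftings yields a directed set $I$ and an isometric $*$-homomorphism $\pi: B \to \cl L$ with $q^\sharp \pi = \nu$; from the proof of (i)$\Rightarrow$(ii) there, $\pi$ is of the form $\pi(x) = Q_{C_1}((\psi_\alpha(x))_\alpha)$, where each $\psi_\alpha: \cl E_\alpha \to C_1$ is an $\vp_\alpha$-morphism satisfying $\|q\psi_\alpha(x) - x\| \le \vp_\alpha\|x\|$ on $\cl E_\alpha$ (extended by $0$ off $\cl E_\alpha$), indexed by pairs $\alpha = (\cl E_\alpha, \vp_\alpha)$ with $\cl E_\alpha \uparrow B$ and $\vp_\alpha \downarrow 0$.

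The composition $\pi \circ i: C \to \cl L$ is then a $*$-homomorphism into a quotient $C^*$-algebra, so the LLP of $C$ (Proposition \ref{p3}) yields a lifting $v: E \to \ell_\infty(I;C_1)$ of $(\pi i)_{|E}$ with $\|v\|_{cb} \le 1 + \vp/2$. Writing $v = (v_\alpha)_{\alpha\in I}$, each coordinate map $v_\alpha: E \to C_1$ satisfies $\|v_\alpha\|_{cb} \le 1 + \vp/2$ (using the isometric identification $M_N(\ell_\infty(I;C_1)) = \ell_\infty(I;M_N(C_1))$), and the identity $Q_{C_1} v = \pi i|_E$ translates exactly into $\|v_\alpha(e) - \psi_\alpha(i(e))\| \to 0$ as $\alpha \to \infty$ for every $e \in E$ (through indices with $i(E) \subset \cl E_\alpha$).

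Finally, since $E$ is finite-dimensional this pointwise convergence is uniform on its unit ball, and the standard inequality $\|u\|_{cb} \le \dim(E)\|u\|$ for linear maps out of the f.d.\ operator space $E$ upgrades it to $\|v_\alpha - \psi_\alpha \circ i\|_{cb} \to 0$. Choosing $\alpha$ large enough so that $\cl E \subset \cl E_\alpha$, $\vp_\alpha < \vp$, and $\|v_\alpha - \psi_\alpha \circ i\|_{cb} < \vp/2$, and setting $\psi := \psi_\alpha|_{\cl E}$, produces the required map: $\psi$ is an $\vp_\alpha$- hence $\vp$-morphism, it satisfies $\|q\psi(x) - x\| \le \vp_\alpha\|x\| < \vp\|x\|$ on $\cl E$, and $\|\psi \circ i|_E\|_{cb} \le \|v_\alpha|_E\|_{cb} + \vp/2 \le 1+\vp$. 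Conceptually, the passage through $\pi$ makes $v_\alpha$ and $\psi_\alpha \circ i$ into two representatives of the same element of $\cl L$, which is what forces their difference to vanish.
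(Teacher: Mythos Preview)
Your proof is correct and follows essentially the same route as the paper's: both pass to the embedding $\pi: B \to \ell_\infty(I;C_1)/c_0(I;C_1)$ supplied by Proposition \ref{pus1}, use the LLP of $C$ to lift $(\pi i)_{|E}$ with controlled cb-norm, observe that the coordinates of this lifting asymptotically coincide with $\psi_\alpha i_{|E}$, and then select a single index $\alpha$ large enough. The only cosmetic difference is that the paper takes the LLP lifting with $\|u\|_{cb}\le 1$ (invoking the $\vp=0$ form noted after Proposition \ref{p3}) whereas you take $\|v\|_{cb}\le 1+\vp/2$ and compensate at the end; this changes nothing.
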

          \begin{proof} 
          Let $(\psi_\alpha)$ and $\pi$ be the maps in \eqref{ee4} for the case $\sigma=Id_B$. By the definition of $\pi$,  we have
          $\|q\psi_\a(x)-x\| \le \vp_\a\|x\|$ for any $x\in \cl E_\a$ (here $\a=(\cl E_\a,\vp_\a)$).
          Let $\chi: \ell_\infty(I;C_1) \to \ell_\infty(I;C_1)/ c_0(I;C_1)$ denote the quotient map.
          By the LLP of $C$ we can lift  $\pi {i}_{|E}$ : this gives us 
           a map $u=(u_\a): E \to \ell_\infty(C_1)$ with $\|u\|_{cb}=\sup_\alpha\|u_\alpha\|_{cb}\le 1$
          such that $\chi u(x)=\pi i (x)$ for all $x\in E$.
          This means that
          $$\forall x\in E\quad \limsup\nl_\alpha \|u_\alpha(x) -\psi_\alpha i(x) \|=0.$$ 
          Since $\dim(E)<\infty$, it follows that  
          $\limsup\nl_\alpha \|u_\alpha  -\psi_\alpha{i}_{|E}  \|_{cb}=0$ and hence
            $\limsup\nl_\alpha \|  \psi_\alpha {i} _{|E}  \|_{cb}\le 1$. 
            Thus the lemma follows by taking $\psi=\psi_\alpha$ for $\alpha$ sufficiently
            ``large" in the net.
          \end{proof}
 \section{Main  construction}
 
 Recall      $\mathscr{C}  =C^*(\F_\infty)$
and  $\mathscr{B}  =B(\ell_2)$.
By the universality of $\B$,  there is an embedding $\C\subset \B$.
It  is  known  (see  \cite[Lemma  2.4]{Kir})  that  any  separable  $C^*$-algebra
embeds  in  a  separable  one  with  the  WEP.  
Thus there is a  separable  $C^*$-algebra $B$
  with  the  WEP  such that   $\mathscr{C}\subset B$.
  Let
  $i:  \mathscr{C}  \to  B$  be  an  embedding  (i.e.  a  faithful  $*$-homomorphism).
  Let  $q:  \mathscr{C}  \to  B$  be  a  surjective  $*$-homomorphism.
  The  relevant  diagrams  are  as  follows:
  
  $$\xymatrix{&&\mathscr{C}\ar[d]^{  q  }  \\
&  \mathscr{C}  
  \ar@{^{(}->}[r]^{  \  \  i  \quad  }  &  B} \xymatrix{&&C_0(\mathscr{C})\ar[d]^{q_0}  \\
&  C_0(\mathscr{C})  
  \ar@{^{(}->}[r]^{  \  \  i_0  \quad  }  &  C_0(B)}$$
    
We will now tackle the lifting problem expressed by these diagrams:
we will locally lift $Id_{C_0(B)}$ by an approximatively multiplicative map using   Lemma \ref{l5}.
To  shorten  the  notation  we  set
$$  \cl L=\ell_\infty(C_0(\C))/  c_0(C_0(\C)).$$
We  denote  by  $Q:  \ell_\infty(C_0(\C))  \to  \cl L$  the  quotient  map
so  that,  as  before,  we  have  
$$\forall  x=(x_n)  \in  \ell_\infty(C_0(\C))\quad    \|Q(  (x_n)  )\|_{ \cl L}=\limsup  \|x_n\|_{C_0(\C)}.$$
 
The next lemma (for a single $u: S \to E$) is the basic step.
\begin{lem}\label{l2}  Let  $F,  E\subset  C_0(\C)$  be      f.d.s.a.  subspaces
such  that  $F.F  \subset  E$.
Fix  $n$  and  $\vp>0$.
There  is  a  f.d.s.a.  subspace  $E_1\subset  C_0(\C)$
and  a  s.a.  map  $T:  E  \to  E_1$  such  that
\item{\rm  (i)}  For  any  subspace  $S\subset  \ell_1^n$  and  any  $u$ in   ${CB(S,E)}$  there  is  $\tilde  u:  \ell_1^n  \to  E_1$  such  that
$$ \tilde  u_{|S}  =  Tu \text{    and    } 
\|\tilde  u\|_{cb}\le    (1+\vp)\|u\|_{cb}.$$
\item{\rm  (ii)}  $\|T\|_{cb}\le  1+\vp$  and  $\|T^{-1}_{|T(E)}\|_{cb}\le  1+\vp$.
\item{\rm  (iii)}  $T(F)T(F) \subset E_1$ and for  all  $x,y\in  F$  we  have
$\|T(xy)-T(x)T(y)\|\le  \vp\|x\|  \|y\|.$
\end{lem}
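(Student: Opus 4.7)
The strategy combines three facts: $C_0(B)$ has the WEP (since $B$ does and $C_0$ is nuclear), $C_0(\mathscr{C})$ has the LLP (similarly), and by Lemma~\ref{ll7} the surjection $q_0:C_0(\mathscr{C})\to C_0(B)$ almost allows liftings. I will build $T=\psi\circ i_0$ from a single $\delta$-morphism $\psi:\cl E\to C_0(\mathscr{C})$ obtained via (a strengthened form of) Lemma~\ref{l5}, where $\cl E\subset C_0(B)$ is a f.d.s.a.\ subspace large enough to absorb the WEP-extensions that will serve property~(i). As preparation, the set $\cl U=\{(S,u):S\subset\ell_1^n,\ u\in CB(S,E),\ \|u\|_{cb}\le 1\}$ is compact in its natural topology, so for small $\eta>0$ I fix a finite $\eta$-net $\{(S_k,u_k)\}_{k=1}^K\subset\cl U$. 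By Proposition~\ref{p1} applied to $C_0(B)$, each $i_0 u_k$ extends to $\tilde w_k:\ell_1^n\to C_0(B)$ with $\tilde w_k|_{S_k}=i_0 u_k$ and $\|\tilde w_k\|_{cb}\le 1+\eta$. I take $\cl E$ to be the f.d.s.a.\ subspace of $C_0(B)$ generated by $i_0(E)$ and the ranges $\tilde w_k(\ell_1^n)$.

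Next I run Lemma~\ref{l5} for this $\cl E$, strengthening its cb-norm conclusion. In its proof, the LLP of $C_0(\mathscr{C})$ is used to lift $\pi\circ i_0|_E:E\to\cl L=\ell_\infty(I;C_0(\mathscr{C}))/c_0(I;C_0(\mathscr{C}))$, yielding $\limsup_\alpha\|\psi_\alpha i_0|_E\|_{cb}\le 1$. The added ingredient is to additionally lift each $\pi\circ\tilde w_k:\ell_1^n\to\cl L$ using the \emph{free} lifting property of $\ell_1^n$: since $\ell_1^n$ carries its MAX operator space structure, any cb map into a quotient lifts to the covering algebra with arbitrarily small loss of cb norm (just lift each $(\pi\tilde w_k)(e_i)$ coordinate-wise). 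This produces $\tilde u_k=(\tilde u_{k,\alpha})_\alpha:\ell_1^n\to\ell_\infty(I;C_0(\mathscr{C}))$ with $\|\tilde u_k\|_{cb}\le 1+2\eta$ and $\chi\tilde u_k=\pi\tilde w_k$, and the f.d.\ of $\ell_1^n$ forces $\|\tilde u_{k,\alpha}-\psi_\alpha\tilde w_k\|_{cb}\to 0$. Hence $\limsup_\alpha\|\psi_\alpha\tilde w_k\|_{cb}\le 1+2\eta$ for every $k$, and taking $\alpha$ sufficiently large selects a single $\psi=\psi_\alpha$ that is a $\delta$-morphism, satisfies $\|q_0\psi(x)-x\|\le\delta\|x\|$ on $\cl E$, and has cb norm $\le 1+\delta$ on $i_0(E)$ and on every $\tilde w_k(\ell_1^n)$. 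I then set $T=\psi\circ i_0:E\to C_0(\mathscr{C})$ and $E_1=\psi(\cl E)+T(F)T(F)\subset C_0(\mathscr{C})$.

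Property~(ii) follows directly: $\|T\|_{cb}\le 1+\delta$ by construction, and $\|T^{-1}|_{T(E)}\|_{cb}\le 1+O(\delta)$ because $q_0 T$ is $\delta$-close in norm to the completely isometric $i_0$ on the f.d.\ subspace $E$, and on a f.d.\ space a small norm-defect yields a small cb-norm defect (with a constant depending on $\dim E$). Property~(iii) is immediate from the $\delta$-morphism property of $\psi$ applied to $i_0(x),i_0(y)\in i_0(F)$, using $i_0(x)i_0(y)=i_0(xy)\in i_0(E)\subset\cl E$; and $T(F)T(F)\subset E_1$ by construction. For property~(i), given arbitrary $(S,u)\in\cl U$, pick a net member $(S_k,u_k)$ within distance $\eta$, and choose a linear perturbation $\Phi:\ell_1^n\to\ell_1^n$ of the identity carrying $S$ onto $S_k$ with $\|\Phi-Id_{\ell_1^n}\|_{cb}=O(\eta)$. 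Set $\tilde u^{(0)}=\psi\tilde w_k\circ\Phi:\ell_1^n\to\psi(\cl E)$; then $\|\tilde u^{(0)}\|_{cb}\le(1+\delta)(1+O(\eta))$ and $\|\tilde u^{(0)}|_S-Tu\|_{cb}=O(\eta)$. Correct to exact agreement on $S$ by setting $d=Tu-\tilde u^{(0)}|_S:S\to C_0(\mathscr{C})$ and $\tilde u=\tilde u^{(0)}+d\circ P$, where $P:\ell_1^n\to S$ is a linear projection whose cb norm is bounded by the projection constant of $S$ in MAX-$\ell_1^n$ (a constant depending only on $n$). Then $\tilde u|_S=Tu$ exactly, $\|\tilde u\|_{cb}\le 1+\vp$ for $\eta,\delta$ chosen small enough in terms of $n$ and $\vp$, and the range lies in $\psi(\cl E)+T(E)\subset E_1$.

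The main obstacle is the strengthening of Lemma~\ref{l5} in the second paragraph, where one must arrange that the \emph{same} $\delta$-morphism $\psi$ has cb norm close to $1$ not only on $i_0(E)$ but simultaneously on each f.d.\ subspace $\tilde w_k(\ell_1^n)$ of $\cl E$; this is what forces the combined use of the LLP of $C_0(\mathscr{C})$ (for $i_0|_E$) and of the free lifting property of the MAX space $\ell_1^n$ (for the $\tilde w_k$). A secondary difficulty lies in the perturbation step of (i), which must convert the approximate, net-based extension into one that agrees with $Tu$ exactly on $S$ while keeping both the cb-norm bound and the range inside the f.d.\ subspace $E_1$.
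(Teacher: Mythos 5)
Your argument is correct and, at its core, runs on the same mechanism as the paper's: extend $i_0u$ using the WEP of $C_0(B)$, pull back by an almost multiplicative local lifting $\psi:\cl E\to C_0(\C)$ supplied by Lemma \ref{l5}, set $T=\psi\, {i_0}_{|E}$ and $E_1=\psi(\cl E)+T(F)T(F)$, and obtain (ii) and (iii) by the same $\dim$-dependent perturbation and $\vp$-morphism estimates. Where you genuinely differ is in how the quantifier over $(S,u)$ in (i) is discharged: the paper reduces to a finite net and then handles the finitely many $u$'s \emph{one after the other}, composing maps $T_1\cdots T_N$ and bookkeeping the multiplicativity errors via \eqref{ee3}, whereas you enlarge $\cl E$ once so as to contain all the WEP-extensions $\tilde w_k(\ell_1^n)$, extract a \emph{single} $\psi$, and then recover exact agreement on an arbitrary $S$ by a perturbation $\Phi$ plus the correction $d\circ P$ with a projection $P:\ell_1^n\to S$ (legitimate since $n$ is fixed, $\|P\|_{cb}=\|P\|\le\sqrt n$, and the smallness of $\|d\|_{cb}$ should be routed through $d=\psi i_0(u-u_k\Phi_{|S})$ so that it comes from the cb-controlled map $\psi\,{i_0}_{|E}$). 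Your version buys a single $T$ with no iteration at the price of a more explicit net/perturbation argument; the paper's buys a clean single-$u$ lemma at the price of composing errors. One substantive remark: the point you single out as the main obstacle --- forcing the same $\psi$ to have cb-norm close to $1$ simultaneously on every $\tilde w_k(\ell_1^n)$ --- requires no strengthening of Lemma \ref{l5} at all: since $\psi\tilde w_k$ is defined on $\ell_1^n$, which carries its maximal operator space structure, one has $\|\psi\tilde w_k\|_{cb}=\|\psi\tilde w_k\|\le\|\psi\|\,\|\tilde w_k\|$, and only the ordinary norm of $\psi$ enters; this is exactly how the paper bounds $\|\tilde u\|_{cb}=\|\psi v\|_{cb}\le(1+\vp')(1+\vp)$. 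Your coordinate-wise lifting of the $\pi\tilde w_k$ is valid, but it re-derives a fact that is free; the only place a genuine cb-estimate on $\psi$ is needed is on $i_0(E)$, i.e. the LLP-based conclusion $\|\psi\,{i_0}_{|E}\|_{cb}\le 1+\vp'$ of Lemma \ref{l5}.
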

\begin{proof}  
We  first  show  that  to  check    (i)  for  a  fixed  $\vp>0$,  it  suffices  to  check  it
with  $\vp$  replaced
by  (say)  $\vp/4$
for  a  suitably  chosen  \emph{finite}  set  of  subspaces  $S\subset  \ell_1^n$
and  a  suitable  \emph{finite}  set  of  $u$'s  in  the  unit  ball  of  $CB(S,E)$.
Since  $n$  is  fixed  the  set  of  $k$-dimensional  subspaces
$S\subset  \ell_1^n$  can  be  viewed  as  being  compact  for  the
Hausdorff  distance,  so  that  it  admits  a  finite  $\d$-net  for  any  $\d>0$.
In  other  words,        by    perturbation,
to  obtain  (i)  for  a  given  $\vp>0$,    it  suffices  to  check  (i)  with  $\vp$  replaced
by  $\vp/2$
for  a  suitably  chosen  \emph{finite}  set  of  subspaces  $S\subset  \ell_1^n$.
Then  $S$  and  $\vp>0$  being  fixed,  since  the  unit  ball  of  $CB(S,E)$  is  also  compact,
  to  show  (i)  with  $\vp$  replaced
by  $\vp/2$  for  all  $u$  in  the  unit  ball  of  $  CB(S,E)$  it  suffices  to  show  (i)  with  $\vp$  replaced
by  $\vp/4$  
  for  only  a  suitably  chosen    \emph{finite}  set  of  $u$'s  in  it.
\\
Being now left with a finite set $(u_i)_{1\le i\le N}$ of $u$'s to extend, we may observe that
we may handle these simply one after the other:
having handled the first $u_1$ 
by producing $\tilde {u_1}$ and $T_1: E \to E_1$
we 
replace $F$ by $T_1(F)$, $u_2: S \to E$ by $T_1 u_2: S \to E_1$
and we apply the same procedure to it, and so on.
Since the number $N$ of $u$'s is fixed,
and $N\vp$ can be chosen arbitrary small,
after $N$ steps, we will have $\|T_1\cdots T_N\|\le (1+\vp)^N$
and the same for the inverses, and
  taking  \eqref{ee3} into account to tackle (iii),
we obtain the announced result. 
\\Thus it suffices to prove Lemma \ref{l2} for a fixed $S$ and a single $u:S \to E$.
By homogeneity we may assume $\|u\|_{cb}=1$.
   Since  $B$  and  hence  $C_0(B)$  has  the  WEP,    the  map
$i_0  u:  S  \to  C_0(B)$
admits  an  extension  $v  :  \ell^n_1\to  C_0(B)$
with  $\|v\|_{cb}  \le(1+\vp)  \|i_0  u\|_{cb}=(1+\vp)\|u\|_{cb}=1+\vp.$  We  have  $v_{|S}  =i_0u$. Let us choose a f.d.s.a.  subspace $\cl E \subset C_0(B)$ large enough so that  $i_0(E)+ v(\ell^n_1) \subset \cl E $.
By Lemma \ref{l5}, for any $0<\vp'<\vp$
there is an $\vp'$-morphism $\psi: \cl E \to C_0(\C)$
with  $\|\psi {i_0}_{|E}\|_{cb}\le 1+\vp'$ such
that $\|q_0{\psi}(x)-x\|\le \vp' \|x\|$ for any $x\in \cl E$. We then set  
$$
E_1=\psi (\cl E) + \psi {i_0}(F)\psi {i_0}(F) \text{  and  }
T=\psi {i_0}_{|E} : E \to E_1.$$ Note $T(F)T(F) \subset E_1$. 
Moreover, we define $\tilde u : \ell^n_1\to E_1$ by
$\tilde u(z)= \psi v(z)$ for all $z\in \ell^n_1$. Then 
$ \tilde  u_{|S}  =  Tu$, $\|\tilde u\|_{cb}\le (1+\vp') (1+\vp)$ and $\|T\|_{cb}\le 1+\vp'$. Since $\vp'$ can be chosen arbitrarily small, we may  assume $\vp'<1/\dim(\cl E)$.
Let $\kappa: \cl E \to C_0(B)$ denote the inclusion map.
Since $\|q_0{\psi} -\kappa\|\le \vp' $    we have (see e.g. \cite[p. 75]{P4})
$\|q_0{\psi} -\kappa \|_{cb}\le \vp'  \dim(\cl E) <1 $. 
Similarly, $\| q_0 T   -\kappa {i_0}_{|E}\|_{cb}\le \dim( E)\| q_0 T   -\kappa {i_0}_{|E}\|\le \vp'  \dim( E) <1 $, and hence
$\|   T (e)\|\ge \| q_0 T (e)\| \ge \|\kappa{i_0}(e)\|-\vp'\dim(  E) \|e\|=(1-\vp'\dim(  E))\|e\|$ for any $e\in E$.
This shows $\|T^{-1}_{|T(E)}\| \le  (1-\dim(  E)\vp')^{-1}$. A similar reasoning
shows that $\|T^{-1}_{|T(E)}\|_{cb} \le  (1-\dim(  E)\vp')^{-1}$
(a variant of this reasoning actually shows $\|\psi^{-1}_{|\psi(\cl E)}\|_{cb} \le  (1-\dim(\cl E)\vp')^{-1}$).
\\
It only remains
to adjust $\vp$ and $\vp'$ to match (i) and (ii) as stated above.
Lastly,  (iii) follows since $\psi$ is an $\vp'$-morphism
on $\cl E$ and $i_0$ an isometric $*$-homomorphism. 
\end{proof}

\begin{thm}\label{t1}
Let  $(Z_n)$  be  a  sequence  of  finite  dimensional  self-adjoint  (f.d.s.a.)  (operator)  subspaces  of  $C_0(\C)$.
There  is  a  non-nuclear separable $C^*$-algebra  $A$  with  both  the  WEP  and  the  LLP.  Moreover,
    for  any  $n$  and  any  $\vp>0$  there  is  a  subspace  $Z\subset  A$  such  that
$d_{cb}(Z_n,Z)<1+\vp$.
\end{thm}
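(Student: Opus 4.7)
The plan is to build $A$ as the inductive limit of a sequence $(E_n, T_n)$ produced by iterating Lemma~\ref{l2}. The $E_n$ are f.d.s.a. subspaces of $C_0(\C)$, and the linking maps $T_n : E_n \to E_{n+1}$ are self-adjoint, $(1+\vp_n)$-complete isomorphisms which are approximately multiplicative on a designated subspace $F_n \subset E_n$ with $F_n F_n \subset E_n$. The tolerances $\vp_n$ will be chosen summable, so that $\prod_n (1+\vp_n) < \infty$; by the composition estimate \eqref{ee3}, the cumulative multiplicativity defects of $T_m \cdots T_n$ then stay controlled.

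Along the construction we dovetail four requirements, handling each at its stage: \emph{(a)} at some stage $k_n$, enlarge $E_{k_n}$ to include a copy of the prescribed $Z_n$; \emph{(b)} fix a countable dense family of triples $(m, S, u)$ with $S \subset \ell_1^m$ and $u \in CB(S, E_k)$ for some $k$, and at the corresponding stage $n$ apply Lemma~\ref{l2}(i) with $E = E_n$ to produce $\tilde u : \ell_1^m \to E_{n+1}$ extending $T_n u$ with $\|\tilde u\|_{cb} \le (1+\vp_n)\|u\|_{cb}$; \emph{(c)} enumerate a countable dense family of pairs $(x,y)$ to be multiplied, and arrange $x,y \in F_n$ for some $n$, so that Lemma~\ref{l2}(iii) applies; \emph{(d)} start with $E_1$ containing a f.d. subspace of $C_0(\C)$ of exactness constant strictly greater than $1$, to force non-exactness in the limit.

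Let $A$ be the inductive limit of $(E_n, T_n)$; writing $J_n : E_n \to A$ for the canonical map, we have $\|J_n\|_{cb} \|J_n^{-1}\|_{cb} \le \prod_{j \ge n}(1+\vp_j)^2 \to 1$, so $A$ is a well-defined complete operator space. The algebra structure is the crux: for $x,y \in F_n$, the elements $T_m \cdots T_n(x) \cdot T_m \cdots T_n(y) \in E_{m+1} \subset C_0(\C)$ form a Cauchy sequence by \eqref{ee3} and summability of the $\vp_k$, hence converge in $C_0(\C)$; this defines $xy \in A$. Extending by density (from (c)) and continuity yields a jointly continuous product on $A$; associativity, the involution (using that the $T_n$ are self-adjoint), and the $C^*$-identity are inherited from $C_0(\C)$ via the asymptotic isometry of the $J_n$.

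The WEP follows from Proposition~\ref{p1}: any $u : S \to A$ with $S \subset \ell_1^m$ is approximated, by $\dim S < \infty$, by a map into some $J_n(E_n)$, and requirement (b) then extends it with near-preserved cb-norm to $\ell_1^m \to A$; a routine perturbation upgrades this to the full extension clause. For the LLP, note that $C_0(\C) = C_0 \otimes_{\min} \C$ has the LLP (nuclearity of $C_0$ combined with Theorem~\ref{kir} identifies its min and max tensor norms against $\B$), so by Remark~\ref{rus3} it locally embeds in $\C$. Every f.d. subspace of $A$ is $(1+\vp)$-cb-equivalent to one in some $E_n \subset C_0(\C)$, hence $d_{S\C}(A) = 1$; combined with WEP, Proposition~\ref{p2} delivers the LLP. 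Non-exactness (hence non-nuclearity) comes from (d), and the moreover clause from (a) together with the distortion bound $\prod_{j \ge k_n}(1+\vp_j) < 1+\vp$ for $k_n$ sufficiently far out. The principal obstacle is showing that the operator-space inductive limit $A$ is genuinely a $C^*$-algebra: one must coordinate the enumeration (c) with the bound \eqref{ee3} so that every product converges, and leverage the ambient algebra $C_0(\C)$ to recognize these Cauchy sequences and transport associativity and the $C^*$-identity to the limit. Everything else is then bookkeeping within the iteration.
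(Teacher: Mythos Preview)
Your overall architecture matches the paper's: build $(E_n,T_n)$ by iterating Lemma~\ref{l2} (the paper packages this as Lemma~\ref{l3}, which handles all $S\subset\ell_1^n$ and all $u$ simultaneously at each stage rather than dovetailing), then verify the WEP via Proposition~\ref{p1}, observe $d_{S\C}(A)=1$ because every $E_n\subset C_0(\C)$, and derive the LLP from Proposition~\ref{p2}. The non-nuclearity and the $Z_n$-inclusion are handled the same way.

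However, the step you yourself flag as the principal obstacle is not correctly resolved. You claim that for $x,y\in F_n$ the sequence $T_m\cdots T_n(x)\cdot T_m\cdots T_n(y)$ is Cauchy \emph{in $C_0(\C)$} and converges there. This is false: there is no reason whatsoever for $T_{m+1}T_m\cdots T_n(x)$ to be close to $T_m\cdots T_n(x)$ inside $C_0(\C)$; the maps $T_k$ may move their domains to entirely disjoint corners of $C_0(\C)$. The estimate~\eqref{ee3} only says that $T_m\cdots T_n(x)\,T_m\cdots T_n(y)$ is close to $T_m\cdots T_n(xy)$, i.e.\ the products track the images of $xy$ under the linking maps---but those images themselves do not converge in $C_0(\C)$. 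Consequently you have no limit in $C_0(\C)$ to serve as the product, and hence no ambient $C^*$-algebra from which to ``transport associativity and the $C^*$-identity.''

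The paper's remedy is precisely to replace $C_0(\C)$ by the genuine $C^*$-algebra $\cl L=\ell_\infty(C_0(\C))/c_0(C_0(\C))$, embedding $E_n$ via $w_n(x)=Q\big((0,\ldots,0,x,T_n x,T_{n+1}T_n x,\ldots)\big)$. The point is that in $\cl L$ the tail behaviour is all that matters, so the wandering of the $T_m\cdots T_n(x)$ inside $C_0(\C)$ becomes irrelevant, and \eqref{ee3} translates exactly into the estimate $d(ab,Y_{n+k+1})\to 0$ for $a,b\in Y_n$. One then gets $A=\overline{\cup Y_n}$ as a $C^*$-\emph{sub}algebra of $\cl L$, so associativity and the $C^*$-identity come for free. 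Your argument can be repaired either by adopting this embedding, or by carrying out the product construction in the abstract inductive limit $A$ itself (the sequence $J_{m+1}\big(T_m\cdots T_n(x)\,T_m\cdots T_n(y)\big)$ \emph{is} Cauchy in $A$) and then verifying the $C^*$-identity by hand---but not by asserting convergence in $C_0(\C)$.
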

\begin{rem} Since the WEP and the LLP pass to the unitization
the unitization of $A$ has the properties in Theorem \ref{t1}.
Moreover, by a result due to Voiculescu (see e.g. \cite[p. 251]{BO}) for any separable $C^*$-algebra
    $A$, the cone $C_0(A)$ is quasidiagonal.
    Thus replacing $A$ by the unitization of $C_0(A)$ we obtain a unital quasidiagonal example
    as in Theorem \ref{t1}.
\end{rem}

\begin{rem}
It is easy to deduce from \eqref{us10}  that for 
 any nuclear $C^*$-algebra $C$, the algebra 
 $\C$ is locally equivalent 
 to $C \otimes_{\min} \C$. In particular 
 $\C$ is locally equivalent 
 to $C_0(\C)$.
\end{rem}
\begin{rem}  Since  the  set  of  f.d.    subspaces    of    $C_0(\C)$  is  $d_{cb}$-separable
if  we  choose  for  $(Z_n)$  a  dense  sequence,  then
for  any  f.d.  subspace  $Z'$ in $  C_0(\C)$ (or in $\C$)  and  any  $\vp>0$  there  is  a  subspace  $Z\subset  A$  such  that
$d_{cb}(Z',Z)<1+\vp$.  In  the  converse  direction, by Remark \ref{rus3} the  LLP
of  $A$  implies  that    $d_{S\C}(A)=1$.
Thus,  with this choice of $(Z_n)$,  $A$  and  $\C$  are   locally equivalent.  
\end{rem}

\begin{lem}\label{l3}  Let  $(Z_n)$  be  as  in  Theorem  \ref{t1}.  Let  $\vp_n>0$  be  such  that  $\sum  \vp_n<\infty$.
There  is  a  sequence  of  f.d.s.a.  subspaces  $E_n  \subset  C_0(\C)$  
and  s.a.  maps  $T_n  :  E_n  \to  E_{n+1}$      such  that  we  have  for  any  $n\ge  1$
\item{\rm  (i)}  $\forall  S\subset  \ell_1^n,\forall  u:  S  \to  E_n$
$\exists  \tilde  u  :  \ell_1^n  \to  E_{n+1}$  such  that
$$  \tilde  u  _{|S}  =T_n  u      \text{    and    }  \|\tilde  u\|  \le  (1+\vp_n)    \|u\|_{cb}.$$
\item{\rm  (ii)}  $\|T_n\|_{cb}\le  1+\vp_n$  and  $\|{T_n^{-1}}_{|T_n(E_n)}\|_{cb}\le  1+\vp_n$.
\item{\rm  (iii)}  For  any  $n\ge  2$  we  have      $T_{n-1}(E_{n-1})T_{n-1}(E_{n-1})  \subset  E_{n}$  and
    \begin{equation}\label{e10}
    \forall  x,y\in  T_{n-1}(E_{n-1})\quad  \|T_n(x)T_n(y)-T_n(xy)\|\le  \vp_n  \|x\|\|y\|.\end{equation}  
  \item{\rm  (iv)}  For  any  $n\ge  2$  we  have  $Z_n\subset  E_n$.
\end{lem}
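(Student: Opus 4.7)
My plan is to construct the sequences by induction on $n$, using Lemma \ref{l2} as the step-to-step engine and a small padding operation to absorb both the target subspaces $Z_n$ and the squared images required for the next iteration.

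I will initialize with $E_1 := Z_1$ (any f.d.s.a.\ choice would do) and formally set $T_0 = 0$ with $E_0 = \{0\}$, so the containment $T_0(E_0) \cdot T_0(E_0) \subset E_1$ holds vacuously. For the inductive step, suppose $E_1, \dots, E_n$ and $T_1, \dots, T_{n-1}$ have been built so that (i)--(iv) hold at every level already processed and the invariant
$$ T_{n-1}(E_{n-1}) \cdot T_{n-1}(E_{n-1}) \subset E_n $$
is in force. I will apply Lemma \ref{l2} with $F := T_{n-1}(E_{n-1})$, $E := E_n$, dimension parameter $n$, and tolerance $\vp_n$. The invariant gives $F \cdot F \subset E$, so Lemma \ref{l2} yields a f.d.s.a.\ subspace $E_{n+1}^{\flat} \subset C_0(\C)$ together with a self-adjoint map $T_n : E_n \to E_{n+1}^{\flat}$ satisfying (i), (ii) at level $n$ and the multiplicativity estimate on $F = T_{n-1}(E_{n-1})$ required by (iii) at level $n$.

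To prepare for the next iteration and to absorb $Z_{n+1}$, I will then set
$$ E_{n+1} := E_{n+1}^{\flat} + \mathrm{span}\{ T_n(x) T_n(y) : x,y \in E_n \} + Z_{n+1}. $$
This is a f.d.s.a.\ subspace of $C_0(\C)$, since $T_n$, $E_n$ and $Z_{n+1}$ are all self-adjoint. Composing $T_n$ with the inclusion $E_{n+1}^{\flat} \hookrightarrow E_{n+1}$ preserves the (i)--(ii) estimates; $Z_{n+1} \subset E_{n+1}$ secures (iv) at level $n+1$; and $T_n(E_n) \cdot T_n(E_n) \subset E_{n+1}$ reinstates the invariant, so the induction can continue.

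The argument is essentially bookkeeping on top of Lemma \ref{l2}, and no step presents a substantive obstacle. The one structural move worth highlighting is the padding by $T_n(E_n) \cdot T_n(E_n)$: Lemma \ref{l2}(iii) only delivers $T_n(T_{n-1}(E_{n-1}))^2 \subset E_{n+1}^{\flat}$, which is strictly weaker than the invariant $T_n(E_n)^2 \subset E_{n+1}$ that the next application of Lemma \ref{l2} will require. Supplying this gap by hand is the only place where the construction goes beyond a direct invocation of the preceding lemma.
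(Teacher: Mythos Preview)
Your proof is correct and follows essentially the same approach as the paper's own argument: induction using Lemma~\ref{l2} with $F=T_{n-1}(E_{n-1})$ and $E=E_n$, followed by padding $E_{n+1}$ with $Z_{n+1}$ and $\mathrm{span}[T_n(E_n)T_n(E_n)]$ to secure~(iv) and the invariant for the next step. The only cosmetic difference is the base case: the paper takes $T_1$ to be the inclusion $E_1\hookrightarrow E_2=E_1+Z_2+\mathrm{span}[E_1E_1]$ (noting that~(i) is trivial for $n=1$), whereas you set $T_0=0$, $E_0=\{0\}$ and run Lemma~\ref{l2} already at $n=1$; both are equally valid.
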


\begin{proof}  We  construct  $E_n,T_n$  by  induction  on  $n$  starting  from  an  arbitrary
$E_1=Z_1$.  At  the  initial  step  $n=1$  (i)  is  trivial    and  (iii)  is  void  so  that  we  simply  may  set
$E_2=E_1+Z_2+{\rm  span}[E_1.E_1]$  and  let  $T_1:  E_1  \to  E_2$  be  the  natural  inclusion.
We    have  the  required  properties  with  $\vp_1=0$.
\\
Assume  that  $(E_k)_{k\le  n}$  and  $(T_k)_{k<n}$    have  been  constructed  satisfying  (i)  (ii)  (iii)  (iv).
For  the  induction  step  we  must  produce  $E_{n+1}$  and  $T_n$.
We  find  $T_{n}:  E_n  \to  E_{n+1}$  using  Lemma  \ref{l2}  applied  to  $E=E_n$ 
with    $\vp=\vp_n$,
and  taking  $F=T_{n-1}(E_{n-1})$.  This  gives  us  $E_{n+1}$  and  $T_{n}:  E_n  \to  E_{n+1}$  
(equal  to  the  $T$  given  by    Lemma  \ref{l2})  satisfying  (i)  (ii)  
and  \eqref{e10}.
But  since  (i)  (ii)      remain  unchanged  if  we  enlarge  $E_{n+1}$,
we  may  replace  our  subspace  by  $E_{n+1}+Z_{n+1}+{\rm  span}[T_n(E_n)T_n(E_n)]$      to  ensure  that
$T_n(E_n)T_n(E_n)\subset  E_{n+1}$  and  $Z_{n+1}\subset  E_{n+1}$,  so  that
  (iv)  also  holds  at  the  next  step.
This  completes  the  proof  by  induction.
  \end{proof}
  
\begin{proof}[Proof  of  Theorem  \ref{t1}]
Let  $\vp_n>0$  be  such  that  $\sum  \vp_j  <\infty$.  Let  $\d_n=\sum_{j>n}  \vp_j$.  Note  that  the  infinite  product
$\prod_{j\ge  1}  (1+\vp_j)    $  converges.  
We  define  $\eta_n>0$  by  the  equality
\begin{equation}\label{e12}
1+  \eta_n=  \prod\nl_{j\ge  n}  (1+\vp_j)    ,\end{equation}
so  that  $\eta_n  \to  0$.  
\\
  Let  $(E_n)$  be  as  in  Lemma  \ref{l3}.
We  will  work  in  the  ambient  $C^*$-algebra  $  {\cl L}=\ell_\infty(C_0(\C))/  c_0(C_0(\C))$,
with  quotient  map  $Q:  \ell_\infty(C_0(\C))\to  {\cl L}$.
We  denote  by  $L\subset  \ell_\infty(C_0(\C))$  the  subspace
formed  of  the  sequences  $(x_n)$  such  that  $x_n\in  E_n$  for  all  $n$,
so  that  $L  \simeq  \ell_\infty(\{E_n\})$.

We  introduce
\def\t{\theta}
a  mapping  $\t_n  :  E_n  \to  \ell_\infty(C_0(\C))$,  with  values  in  $L$,  
defined  by
$$  \forall  x\in  E_n\quad  \t_n(x)=(0,\cdots,  0,x,T_n(x),T_{n+1}T_n(x),T_{n+2}T_{n+1}T_n(x),\cdots)$$
where  $x$  stands  at  the  $n$-th  place.\\
By  (ii)  in  Lemma  \ref{l3},  we  have  
$$\forall  n\ge  1  \quad  \|    \t_n\|_{cb}\le  1+\eta_n.$$
We  define
  the  subspace
$Y_n\subset  {\cl L}$    
by  setting
$Y_n=  Q\t_n(E_n).$
Then  $Y_n$  is  a  f.d.s.a.  subspace  of  ${\cl L}$  such  that
$Y_n\subset  Y_{n+1}$
for  all  $n\ge  1$.  Indeed,
we  have
$$\forall  x\in  E_n\quad      Q\t_n(x)=Q\t_{n+1}(T_n(x)    ),$$
because  $\t_n(x)  -\t_{n+1}(T_n(x)    )$  is  a  finitely  supported  element  of  $\ell_\infty(C_0(\C))$.
Let
$$A=\ovl{\cup  Y_n}  \subset  {\cl L}.$$
A  priori  this  is  a  s.a.  subspace.  We  will  see  below  that  it  is  actually  a  $C^*$-subalgebra.

We  will  first  show  that  $d_{cb}(  Y_n,E_n)  \le  \prod_{j\ge  n}  (1+\vp_j)=1+\eta_n$.
Note  first  of  all  that  the  map  $w_n=  Q  \t_n:  E_n  \to  Y_n$
satisfies    $\|w_n\|_{cb}  \le  \|\t_n\|_{cb}  \le1+\eta_n$.
  By  definition  of  $Y_n$  we  know  $Y_n=w_n(E_n)$.  
  We  claim  that  \begin{equation}  \label{e11}  \|  {w_n^{-1}}_{|Y_n}  \|_{cb}  \le  \prod_{j\ge  n}  (1+\vp_j).  \end{equation}
  Indeed,  for  any  $x\in  E_n$
  we  have
  $\|w_n(x)\|=\limsup_k  \|T_{n+k}\cdots  T_{n+1}T_n(x)\|$  and  hence  using
  (ii)  in  Lemma  \ref{l3}
    and  one  more  telescoping  argument
  we  find
  $\|w_n(x)\|  \ge  \prod_{  j\ge  n}  (1+\vp_j)^{-1}  \|x\|$.
  This  shows  
  $  \|  {w_n^{-1}}_{|Y_n}  \|  \le  \prod_{j\ge  n}  (1+\vp_j)  $.
  A  simple  modification  gives  us  the  same  for  the  cb-norm,  whence  the  claim.\\
  Now  we  have  a  commuting  diagram
 factorizing    the  inclusion  $Y_n  \subset  Y_{n+1}$,
  as  follows:
  $$\xymatrix{&E_n\ar@{}[dr]\ar[r]^{T_n\  \  }  &E_{n+1}\ar[d]^{w_{n+1}}    \\
&  Y_n  \ar[u]^{w^{-1}_{n}}
  \ar@{^{(}->}[r]    &  Y_{n+1}}$$
Recall  $\d_n=\sum_{j>n}  \vp_j$.  
Let $c= \prod_1^\infty (1+\vp_j)$.To  check  that  $A$  is  a  subalgebra  of  ${\cl L}$
we  will  show  that
\begin{equation}  \label{e15}
\forall  a,b\in  Y_n\quad  d(ab,  Y_{n+1})\le  (2c^3+c^2)  \d_n  (1+\eta_n)^2\|a\| \|b\|.\end{equation}
Assume  $a=w_n(x)=w_{n+1}(T_n(x)    )$  and  $b=w_n(y)=w_{n+1}(T_n(y)    )$.
Then
$$ab=  Q\t_{n+1}(T_n(x)    )  Q\t_{n+1}(T_n(y)    )=Q(\t_{n+1}(T_n(x)    )\t_{n+1}(T_n(y)    )$$
$$=
Q((0,\cdots,  0,T_n(x)T_n(y),T_{n+1}T_n(x)T_{n+1}T_n(y),T_{n+2}T_{n+1}T_n(x)T_{n+2}T_{n+1}T_n(y),\cdots)),$$
where  $T_n(x)T_n(y)\in  E_{n+1}$  stands  at  the  $n+1$-place.
We  will  compare  $ab$
with  $Q\t_{n+1}  (T_n(x)T_n(y))$.
To lighten the notation, we set 
$$T_{n+k,n+1}=T_{n+k}\cdots  T_{n+1}.$$
We  have
$$\|  ab-  Q\t_{n+1}  (T_n(x)T_n(y))\|=\ovl\lim_{k}      \|T_{n+k,n+1}T_n(x)T_{n+k,n+1}T_n(y)-  T_{n+k,n+1}[T_n(x)T_n(y)]\|.$$
 We  claim
that
 \begin{equation}  \label{us6} 
 \|T_{n+k,n+1}T_n(x)T_{n+k,n+1}T_n(y)-  T_{n+k,n+1} [T_n(x)T_n(y)]\|\le
(2c^3+c^2)
(\sum\nl_{n<j\le  n+k}  \vp_j)\|x\|\|y\|.\end{equation} 
By homogeneity we may assume $\|x\|=\|y\|=1$.
For convenience observe $\| T_{n+k,n+1}T_n\|\le c$  and $\|T_n\|\le c$,
whence the following trivial a priori bound for the left hand side of \eqref{us6}
 \begin{equation}  \label{us5} 
 \|T_{n+k,n+1}T_n(x)T_{n+k,n+1}T_n(y)-  T_{n+k,n+1} [T_n(x)T_n(y)]\|\le
2c^3 .\end{equation} 
Let us check \eqref{us6} by induction on $k$.
The case $k=1$ is clear by  \eqref{e10}. 
  Assume \eqref{us6} proved for a given $k$
  and let us deduce the same for $k+1$.
  We  will  use     an  easy
  telescoping  sum  argument. 
  Since  
    $\|T_{n+k+1}\|\le  1+\vp_{n+k+1}$, 
    taking \eqref{us5} into account, we have by \eqref{us6} 
    $$\|T_{n+k+1}   [T_{n+k,n+1} T_n(x)T_{n+k,n+1}T_n(y)]-  T_{n+k+1}T_{n+k,n+1}[T_n(x)   T_n(y)]\|
    $$
    $$\le
     (2c^3+c^2)(\sum\nl_{n<j\le  n+k}  \vp_j) +2c^3  \vp_{n+k+1} 
        , $$
    while replacing $n$ by $n+k+1$ 
    in \eqref{e10} yields
$$\|  T_{n+k+1}T_{n+k,n+1}T_n(x)  T_{n+k+1}  T_{n+k,n+1}T_n(y)  
-T_{n+k+1}  [T_{n+k,n+1}T_n(x)  T_{n+k,n+1}T_n(y)]          \|\le  \vp_{n+k+1} c^2    $$
and  hence  adding  these  last  two  inequalities  we  find
$$\| T_{n+k+1,n+1}T_n(x)   T_{n+k+1,n+1}T_n(y)-  T_{n+k+1,n+1}[T_n(x)T_n(y)]    \|$$
$$\le 
 (2c^3 +c^2)( \sum\nl_{n<j\le  n+k}  \vp_j
        +\vp_{n+k+1})   ,$$
   which completes the induction, whence proving of  the claim.
  From  the  claim  we  deduce
$$\|  ab-  Q\t_{n+1}  (T_n(x)T_n(y))\|  \le (2c^3 +c^2) \d_n    \|x\|\|y\|$$
and  \eqref{e15}  follows  by  \eqref{e11}  and  \eqref{e12}.  
But  now  since  $Y_n\subset  Y_{n+k}$, \eqref{e15} also implies
\begin{equation}  \label{ee5}\forall  a,b\in  Y_n\quad  d(ab,  Y_{n+k+1})\le    (2c^3+c^2)\d_{n+k}(1+\eta_{n+k})^2 \|a\|\|b\|  \to  0,\end{equation} 
and  hence  $ab\in  \ovl{\cup  Y_n}  =A$.
Clearly  the  same  conclusion  hods  for  any  $a,b\in  {\cup  Y_n}$,  so  that  $A$
(which,    as  we  already  noticed,  is  s.a.)      is  a  $C^*$-subalgebra
of  ${\cl L}$.

  We  will  now  show  that  $A$  has  the  WEP.  By  Proposition  \ref{p1},
this  reduces  to    the  following:

  \begin{ass1}\label{a1}  Fix  $n$    and  let  $u:  S  \to  A$  with  $S\subset  \ell_1^n$  and  $\|u\|_{cb}  \le  1$.
  For  any  $\vp>0$    there  is  an  extension  of  $u$  denoted  by
  $\tilde  u:  \ell_1^n  \to  A$  such  that  
  $ \tilde  u_{|S} =u  $  and  $\|\tilde  u\|\le  1+\vp$.
  \end{ass1}
  To  check  Assertion  \ref{a1}    we  may  obviously
  assume  by  density  that  $u(S)\subset  \cup  Y_m  $,  or  equivalently
  that  $u(S)\subset  Y_m$  for  some  $m\ge  n$  that  can  be  chosen  as  large  as  we  wish.
  Note  that  we  have  a  natural  embedding  $\ell_1^n  \subset  \ell_1^m$,
  with  which  any  $S\subset  \ell_1^n$  can  be  viewed  without  loss  of  generality
  as  sitting  in  $  \ell_1^m$,    and  for  the  map
  $v=w_m^{-1}  u:  S  \to  E_m$    we  have    $\|v\|_{cb}=\|w_m^{-1}  u\|_{cb}  \le  1+\eta_m$.
Taking  this  last  remark  into  account,  
  by  (i)  
in  Lemma  \ref{l3}    applied  to  $E_m$,  after  restricting  the  resulting  map  to  $\ell_1^n$,  we  find
a  map  $\tilde  v  :  \ell_1^n  \to  E_{m+1}$  such  that
$  \tilde  v_{|S}  =  T_m  v $  and  $$\|  \tilde  v\|  \le  (1+\vp_m)    \|v\|_{cb}\le  (1+\vp_m)  \|w_m^{-1}\|_{cb}\|u\|_{cb}
\le  (1+\vp_m)  (1+\eta_m)  .$$
Let  $\tilde  u=  w_{m+1}    \tilde  v  :  \ell_1^n  \to  Y_{m+1}$.
Then  $    \tilde  u_{|S} = w_{m+1} T_m w_m^{-1} u  = u $
and  
$$\| \tilde  u\|
 \le  \|w_{m+1} \| (1+\vp_m)  \|w_m^{-1}\|_{cb}\|u\|_{cb} \le (1+\eta_{m+1})(1+\vp_m)(1+\eta_m) \|u\|_{cb}$$
Since  $m$  can  be  chosen  arbitrarily  large
and  both  $\vp_m\to  0$  and  $\eta_m\to  0$,  we  obtain  Assertion  \ref{a1}.
By  Proposition  \ref{p1},  $A$  has  the  WEP.

By Remark \ref{rus3}, for  any  f.d.  subspace  $E\subset  C$  of  a  $C^*$-algebra  $C$  with  LLP
we  have  $d_{S\C}(E)=1$.  This  holds  in  particular  for  $C=C_0(\C)$.
Since    $d_{cb}(Y_n,E_n)\le \|w_n\|_{cb}\|w_n^{-1}\|_{cb} \le (1+\eta_n)^2$  
and  $E_n\subset  C_0(\C)$  we  have  $d_{S\C}(Y_n)\le  (1+\eta_n)^2$  for  all  $n$.
Since  $A=\ovl{\cup  Y_n}  $  with  $(Y_n)$    increasing,  we  have
$d_{S\C}(Y_k)\le  d_{S\C}(Y_n)$  for  all  $k\le  n$  and  hence  $d_{S\C}(Y_k)=1$
for  all  $k$.  By  perturbation  this  implies
$$\forall  E\subset  A\quad  
d_{S\C}(E)=1.$$
Since  $A$  has  the  WEP,  Proposition  \ref{p2}  implies  that  $A$  also  has  the  LLP.

Lastly,  since  we  have  $Z_n\subset  E_n$  for  all  $n$
there  is    $Z'_n\subset  Y_n$    such  that  $d_{cb}(Z'_n,Z_n)\le  d_{cb}(Y_n,E_n)\le  (1+\eta_n)^2$.
This  is  not  quite  what  is  stated  in  Theorem  \ref{t1}.  But
if  we  arrange  the  sequence  $(Z_n)$  so  that  each  space  in  it  is  repeated  infinitely  many  times,
then  for  any  given  space  $Z  $  in  the  sequence  $\{Z_n\}$  there  will  be
$Z'_n\subset  Y_n$  satisfying  $d_{cb}(Z'_n,Z  )\le  (1+\eta_n)^2$
for    infinitely  many
$n$'s.  Choosing  $n$  large  enough  so  that  $ (1+\eta_n)^2< 1+   \vp$,  we  obtain  
the  second  part  of  Theorem  \ref{t1}.
  \end{proof}
  
    \section{Possible  variants}
    
  \begin{ass}\label{1} We can avoid the use of
   the separable $C^*-$subalgebra $B\subset B(H)$ in our construction: we use $B(H)$ instead,
     a quotient map $C^*(\F) \to B(H)$ (for some large enough free group $\F$) and the fact that any separable
     $C^*$-
     subalgebra of $C^*(\F)$ lies in a copy of $\C$ embedded in $C^*(\F)$.
     \end{ass}

\begin{ass}\label{2} Using perturbation arguments, one  could  work  with  subspaces  $E$  such  that  $E  \subset  {\cl P}_0\otimes  \C$  
    where ${\cl P}_0={\rm span}[t^m\mid m\ge 1] \subset C_0 $ is the space of polynomials.
\end{ass}

      \begin{ass}\label{4} We  can  actually  work  with  non  s.a.  subspaces,  and  impose  an  additional  condition  that
      $\{T_n(x)^*  \mid  x\in  E_n\}  \subset  E_{n+1}$  together  with
      $\|T_n(x^*)  -  T_n(x)^*\|\le  \vp_n  \|x\|$  for  any  $x\in  T_{n-1}(E_{n-1})$.
      We  then  will  be  able  to  conclude  just  the  same  that  $A$  is  s.a.
    \end{ass}

    \begin{ass}\label{5}  The  construction  
      works  just  as  well  if  we  use  all  subspaces  of  $\C$  instead  of
      $\{\ell_1^n\}$,  in  the  style  of  
            Remark  \ref{R1}  (with  $C=\C$).
            More  precisely,  let  $X_n\subset  \C$  be  an  increasing  family
            of  f.d.  subspaces  with  dense  union.  We  may  replace  
            $S\subset  \ell_1^n$
            by  $S\subset  X_n$,  and  again  study  the  extension  problem
            of  $u:  S  \to  E$  by  $\tilde  u  :  X_n  \to  E_1$.
            This  shows  that,  while  using  $\ell_1^n$  seems  simpler,  there  is  nothing  special  about  it,  except  for  the  duality  used  in  the  proof  of  Proposition  \ref{p1} and the fact that $\|v\|=\|v\|_{cb}$ for any $v$
            defined on it.
            \end{ass}
\begin{ass}\label{6}
   By  the  main  result  described  in  \cite{P7}
    the  following  property  of  a  $C^*$-algebra    $A$  is  sufficient  (and  necessary)
    for    the  WEP:\\
    {\it  For  any  $n  \ge  1$,  any  map  $u:  \ell_\infty  ^n  \to  A$
    with  $\|u\|_{cb}\le  1$  and  any  $\vp>0$,
    there  are  $a_j,b_j\in  A$  such  that  $\sup\nl_{1\le  j\le  n}\|u(e_j)-a_jb_j\|\le  \vp$
    and  $\|\sum  a_ja_j^*\|^{1/2}  \|\sum  b_j^*b_j\|^{1/2}  \le1+\vp$.}
    
    Indeed,  this  implies  that  $\|u\|_{dec}\le  1$  for  any  such  $u$.  One  can  use  this  criterion  instead  of  the  one  in  Proposition  \ref{p1}
    to  construct  our  main  example.
    \end{ass}
    \begin{ass}\label{7} In fact we can avoid the use of the preceding result
       using all the algebras $M_N(A)$ in place of $A$. Then we may restrict to $n=3$.
       Indeed, by the criterion in \cite{P}, the pair $(A,\C)$ is nuclear
       (i.e. $A$ has the WEP) if and only if  for any $N\ge 1$ the algebra $M_N(A)$ satisfies the factorization in the preceding point \ref{6}, restricted to $n=3$.
    \end{ass}

            \section{A more general viewpoint}
            
            Perhaps the most general
           way to describe the applicability of the preceding construction is
           as follows.
           We assume given an isometric $*$-homomorphism
            $i: \cl C\to B$ and a quotient $*$-homomorphism
           $q: \cl C \to B$ where $\cl C,B$ are separable $C^*$-algebras and $\cl C$ is assumed to have the 
           LLP.  We   assume that $q$ almost allows liftings as defined above in Definition
           \ref{rd1} (this   automatically holds when we pass to the cone algebras).    
           Suppose we are given a ``suitable" (as defined next) property  $\cl P$ of $*$-homomorphisms 
           between $C^*$-algebras. Then if 
             the inclusion
           $i: \cl C \to B$ satisfies $\cl P$, we can construct a separable $C^*$-algebra $A$
           with $d_{S\cl C}(A)=1$ (and hence $d_{S\C}(A)=1$ by Remark \ref{rus3}) so that the identity of $A$ satisfies
           that same property.
           Our goal in this section is to prove this  in an even more general 
           setting that we spell out in Theorem \ref{rt1}.

            Let $\cl P$ be a property of $*$-homomorphisms $\sigma: C \to B$
           between $C^*$-algebras.  We say that $\cl P$ is suitable
           if it is inherited by any 
            $\sigma_1: C_1 \to B_1$ satisfying for some constant $c$ the following
            local factorization through $\sigma$: 
            for any f.d.s.a. subspaces $Y\subset C_1$,   $\cl E^0 \subset B$  and $\vp>0$ there
            are f.d.s.a. subspaces $E\subset C$
            , $\cl E\subset B$ such that $\sigma(E)\subset \cl E$
            with $ \cl E\supset\cl E^0$
            together with 
            a map $\beta: Y  \to E$ with $\|\beta\|_{cb}\le c$
            and
            an $\vp$-morphism (in the sense of
            Definition \ref{rd2}) $\gamma: \cl E \to B_1$
              such that ${\sigma_1}_{|Y}: Y \to B_1$
            admits a factorization of the form
            $$Y {\buildrel \beta \over\longrightarrow} E {\buildrel {\sigma_{|E}} \over\longrightarrow} \cl E {\buildrel \gamma \over\longrightarrow} B_1.$$ 
            For instance if $D$ is another $C^*$-algebra, we may consider
           the property that $Id_{D} \otimes \sigma$
           extends to a  contraction from $D \otimes_{\min}  C$ 
           to  $D \otimes_{\max} B$. This is an example of
            suitable property. 
            The case $D=\C$ corresponds to the WEP.
            We give   details on this in  Corollary \ref{c1}.\\
            Another example of suitable property 
            appears in the context of the similarity length
            in the sense of \cite[p. 401]{P4};
            let $\|\cdot\|_{(d)}$ be the norm  on $M_n(B)$ appearing in \cite[p. 401]{P4}
            when $B$ is an arbitrary $C^*$algebra.
            We say that  $\sigma: C \to B$ has $\cl P_d$
            if there is a constant $K$ such that, for any $n$,
            any   $x\in M_n(B)$ satisfies
            $\|( Id_{M_n} \otimes \sigma)(x)\|_{(d)} \le K \|x\|_{M_n(C)}.$
            A $C^*$-algebra $B$ is called of length $d$
            if  its identity map satisfies $\cl P_d$.
            It is known  (see \cite[Cor. 6]{Pij}) that 
            this holds for $B(H)$ with $d=3$ (with   $K=1$).
            From this it is easy to check that there is a separable $C^*$-algebra $B$
            containing $\C$ such that $Id_B$ satisfies $\cl P_3$.
            Using the latter  we can find a $C^*$-algebra $A$
            satisfying the properties in Theorem \ref{t1}
            and additionally of length 3.

            \def\L{\cl L}
            \begin{rem}\label{rr4}[A general setup] To achieve the greatest generality we
            are led to consider the following situation.
            Let $C_n,B_n$ be $C^*$-algebras ($n\ge 0$).
            Assume given, for each $n\ge 0$, an isometric $*$-homomorphism $i_n: C_n \to B_n$ 
            and   a surjective $*$-homomorphism $q_n: C_{n+1} \to B_n$ that almost allows liftings.
            Let $L=\ell_\infty(\{C_n\}) $, $\cl I_0 =c_0(\{C_n\})$ 
            and $\L=L/ \cl I_0$. We assume given,  for each $n\ge 1$, a certain
            correspondence $E \mapsto ({\cl E}[n,E], \vp[n,E])$ associating to a f.d.s.a. subspace $E$ of 
            $C_n$      a   f.d.s.a. subspace ${\cl E}[n,E]$ of $B_n$
            and a positive number $\vp[n,E]>0$.
           We also give ourselves
            a sequence of  f.d.s.a. subspaces $E_n^0\subset C_n$.
            The condition 
           $E_n \supset E_n^0$ in the next statement
           should be interpreted as expressing that $E_n$ is ``arbitrarily" large.

            \end{rem}
            $$\xymatrix{&&C_{n+1}\ar@{>>}[d]^{ q_n } \\
& C_n
 \ar@{^{(}->}[r]^{ i_n } &B_n }$$
 In short we are considering a sequence of $C^*$-algebras $(C_n)$
 such that  $C_n$ is a subquotient of $C_{n+1}$  for each $n$,
 and we assume that the quotient maps   almost allow liftings.
 
            The goal of the next theorem is 
            to show that there exists a $C^*$-algebra $A$ that has the same
            asymptotic  ``local" properties
            as the sequence of maps  $i_n : C_n \to B_n$. 
            As  before in \S \ref{out},
            we construct our $A$ as an inductive limit of operator spaces.        

             \begin{thm}\label{rt1} In the   situation described in Remark \ref{rr4}, 
             given  $\d_n>0$ with $\d_n\to 0$,
             if all the $C^*$-algebras
             $C_n$ have the LLP, 
             there is 
             a $C^*$-subalgebra $A\subset \L$ and an increasing sequence of f.d.s.a.
            subspaces $Y_n \subset A$ with $\ov{\cup_n Y_n}=A$ such that for each $n\ge 0$
            there are f.d.s.a. subspaces $E_n \subset C_n$ and $\cl E_n \subset B_n$ such that             
            $$i_n(E_n)\subset \cl E_n \text{  and also  }
  E_n \supset E_n^0 \text{  and  }  \cl E_n \supset \cl E[n,E_n],  $$ for which 
            the inclusion $Y_n \to Y_{n+1}$ admits a factorization of the following form
            $$Y_n {\buildrel \beta_n\over\to}  E_n \to \cl E_n {\buildrel \gamma_n\over\to} Y_{n+1}$$
      where $\beta_n$ is a linear isomorphism satisfying
            $$\max\{ \|\beta_n\|_{cb},  \|\beta^{-1}_n\|_{cb} \} \le 1+\d_n,$$ 
            and where $\gamma_n$ and $\beta_n^{-1}$ are $\d_n$-morphisms into $\L$ while
            $E_n \to \cl E_n$ is the restriction of the embedding $i_n: C_n \to B_n$.\\
            Moreover, we can ensure that, for each $n$, $\gamma_n$ is
            an $\vp[n,E_n]$-morphism.
            \end{thm}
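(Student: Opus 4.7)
The plan is to mimic the construction used in Theorem \ref{t1} almost verbatim, with the single algebra $C_0(\mathscr{C})$ replaced by the varying sequence $(C_n)$, and the one inclusion/surjection pair $(i,q)$ replaced by the varying sequence $(i_n,q_n)$. The decisive input, namely the existence of almost-multiplicative approximate lifts, is supplied by Lemma \ref{l5} applied with $C=C_n$, $B=B_n$, $C_1=C_{n+1}$, using that $C_n$ has the LLP and that $q_n$ almost allows liftings. Choose a summable sequence $\vp_n>0$ with $\sum \vp_n<\infty$ and $\prod(1+\vp_j)$ converging to a constant $c$; these $\vp_n$ will later be required to be small enough (depending on $\dim E_n$ and on $\delta_n$) so that all error terms that accumulate below are dominated by $\delta_n$.

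First I would build inductively f.d.s.a.\ subspaces $E_n\subset C_n$, $\cl E_n\subset B_n$, and $\vp_n$-morphisms $\psi_n:\cl E_n\to C_{n+1}$ as follows. Start with $E_1\supset E_1^0$ arbitrary. Given $E_n$, set $\cl E_n$ to be any f.d.s.a.\ subspace of $B_n$ containing $i_n(E_n)\cup \cl E[n,E_n]$. Apply Lemma \ref{l5} to the pair $(E_n,\cl E_n)$ with parameter $\vp_n'$ (to be chosen momentarily) to obtain an $\vp_n'$-morphism $\psi_n:\cl E_n\to C_{n+1}$ satisfying $\|\psi_n i_n|_{E_n}\|_{cb}\le 1+\vp_n'$ and $\|q_n\psi_n(x)-x\|\le \vp_n'\|x\|$ for $x\in\cl E_n$. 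Set $T_n:=\psi_n i_n|_{E_n}:E_n\to C_{n+1}$ and let $E_{n+1}$ be any f.d.s.a.\ subspace of $C_{n+1}$ containing $\psi_n(\cl E_n)+\mathrm{span}[T_n(E_n)T_n(E_n)]+E_{n+1}^0$. Exactly as in the end of the proof of Lemma \ref{l2}, from $\|q_n\psi_n-\iota_{\cl E_n}\|\le\vp_n'$ and the dimension estimate for the cb-norm one obtains $\|T_n\|_{cb}\le 1+\vp_n'$ and $\|T_n^{-1}|_{T_n(E_n)}\|_{cb}\le (1-\vp_n'\dim E_n)^{-1}$; by taking $\vp_n'$ small enough (depending on $\dim\cl E_n$ and on $\vp_n$) we ensure $\max\{\|T_n\|_{cb},\|T_n^{-1}\|_{cb}\}\le 1+\vp_n$ and also that the $\vp_n'$-morphism property of $\psi_n$ upgrades to $\psi_n$ being an $\vp_n$-morphism in every estimate we need. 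The multiplicativity defect \eqref{e10} for $T_n$ on $T_{n-1}(E_{n-1})$ is then inherited from $\psi_n$ being an $\vp_n$-morphism on $\cl E_n$ combined with $i_n$ being an isometric $*$-homomorphism.

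Now I repeat the \emph{inductive limit} step of the proof of Theorem \ref{t1} word for word in the new ambient algebra $\L=\ell_\infty(\{C_n\})/c_0(\{C_n\})$. Define $\theta_n:E_n\to L$ by
\[
\theta_n(x)=(0,\dots,0,x,T_n(x),T_{n+1}T_n(x),T_{n+2}T_{n+1}T_n(x),\dots)
\]
with $x$ in the $n$-th slot; put $w_n:=Q\theta_n$ and $Y_n:=w_n(E_n)$. Since $\theta_n(x)-\theta_{n+1}(T_n(x))$ is finitely supported, $Y_n\subset Y_{n+1}$, and by the telescoping bound using (ii) of the induction, $\max\{\|w_n\|_{cb},\|w_n^{-1}|_{Y_n}\|_{cb}\}\le\prod_{j\ge n}(1+\vp_j)=:1+\eta_n$. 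That $A:=\overline{\bigcup_n Y_n}\subset\L$ is a $C^*$-subalgebra is proved by the same telescoping argument that established \eqref{e15}–\eqref{ee5}: for $a,b\in Y_n$, writing $a=w_{n+1}(T_n(x))$, $b=w_{n+1}(T_n(y))$, the multiplicativity defect of each $T_m$ on $T_{m-1}(E_{m-1})$ is bounded by $\vp_m$, and the induction on $k$ leading to \eqref{us6} gives $d(ab,Y_{n+k+1})\le(2c^3+c^2)\delta_{n+k}(1+\eta_{n+k})^2\|a\|\|b\|\to 0$. Thus $A$ is a $C^*$-algebra.

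The required factorization of the inclusion $Y_n\hookrightarrow Y_{n+1}$ is then extracted by setting $\beta_n:=w_n^{-1}|_{Y_n}:Y_n\to E_n$, with the middle map being $i_n|_{E_n}:E_n\to\cl E_n$, and $\gamma_n:=w_{n+1}\circ\psi_n:\cl E_n\to Y_{n+1}\subset\L$. The identity $w_{n+1}\psi_n i_n=w_{n+1}T_n= \text{inclusion}\circ w_n$ (which holds by the definition of $\theta_n,\theta_{n+1}$) gives the desired factorization. The cb-norm estimate $\max\{\|\beta_n\|_{cb},\|\beta_n^{-1}\|_{cb}\}\le 1+\delta_n$ is arranged by choosing $\vp_n$ small enough so that $1+\eta_n\le 1+\delta_n$. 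That $\gamma_n$ is an $\vp[n,E_n]$-morphism follows from $\psi_n$ being one (by construction $\vp_n\le\vp[n,E_n]$), provided we verify that $w_{n+1}$ is multiplicative up to the $c_0$-ideal on $\psi_n(\cl E_n)\subset E_{n+1}$; this is the crux of the argument and is the same telescoping estimate used to prove $A$ is an algebra, now applied on elements already sitting inside $E_{n+1}$, yielding an asymptotically multiplicative $w_{n+1}$ with error $O(\delta_{n+1})$. The same telescoping yields that $\beta_n^{-1}=w_n:E_n\to\L$ is a $\delta_n$-morphism. The main technical obstacle — and where the choice of the parameters $\vp_n$ really must be fine-tuned — is the simultaneous control of three things at step $n$: the cb-bound $1+\delta_n$ on $\beta_n^{\pm 1}$, the $\delta_n$-morphism property of $\gamma_n$ and $\beta_n^{-1}$ into $\L$, and the $\vp[n,E_n]$-morphism property of $\gamma_n$ (which depends on $E_n$, itself constructed earlier in the induction). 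This is handled, as in the proof of Theorem \ref{t1}, by choosing $\vp_n$ after $E_n$ and $\cl E_n$ are known, so small that the accumulated errors from all later steps, estimated through $\eqref{ee3}$ and the telescoping bounds above, stay below $\min\{\delta_n,\vp[n,E_n]\}$.
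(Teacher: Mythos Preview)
Your proposal is correct and follows essentially the same approach as the paper: the same inductive construction of $E_n,\cl E_n,\psi_n,T_n$ via Lemma~\ref{l5}, the same maps $\theta_n,w_n$ and $Y_n=w_n(E_n)$, the same telescoping estimates to show $A$ is a $C^*$-subalgebra, and the same definitions $\beta_n=w_n^{-1}$, $\gamma_n=w_{n+1}\psi_n$.

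There is one small technical slip. To show that $\gamma_n=w_{n+1}\psi_n$ is an $\vp[n,E_n]$-morphism on $\cl E_n$, you need the composition estimate \eqref{ee3'}, and this requires $\psi_n(\cl E_n)\psi_n(\cl E_n)\subset E_{n+1}$ so that $w_{n+1}$ is defined on the products $\psi_n(x)\psi_n(y)$ for arbitrary $x,y\in\cl E_n$. Your $E_{n+1}$, however, contains only $T_n(E_n)T_n(E_n)=\psi_n(i_n(E_n))\psi_n(i_n(E_n))$, which is strictly smaller in general since $\cl E_n\supsetneq i_n(E_n)$. The paper avoids this by setting $E_{n+1}=\psi_n(\cl E_n)+\psi_n(\cl E_n)\psi_n(\cl E_n)+E_{n+1}^0$; once you make the same enlargement, your argument goes through exactly as written.
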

            \begin{proof}
            We will construct the sequence $(E_n, \cl E_n)$ by induction, starting 
            from $E_0= E_0^0, \cl E_0 = \cl E[0,E_0] $. 
            The  induction reasoning will additionally produce,  for each $n$, a
            number $\vp_n'>0$  and an $\vp'_n$-morphism 
            $\psi_n : \cl E_n \to   C_{n+1}$ such that $ \psi_n(\cl E_n) \subset E_{n+1}$
            (so  that we may view $\psi_n$ as taking values in $E_{n+1}$), and $T_n=\psi_n {i_n}_{| E_n}: E_n \to E_{n+1}$ will be such that
            $\max\{\|T_n\|_{cb}, \|{T^{-1}_n}_{|T_n(E_n)}\|_{cb}\} \le 1+\vp'_n$. 
            Moreover, we will ensure that $\psi_n( \cl E_n )\psi_n( \cl E_n ) + E^0_{n+1} \subset E_{n+1}$
            (so that a fortiori $T_n(  E_n )T_n(  E_n ) + E^0_{n+1} \subset E_{n+1}$).
            \\The number $\vp_n'$ can be defined as
            $$\vp'_n=c 2^{-n-1}\min\{1, \vp[j,E_j],\d_j \mid j\le n \}$$
            where $c>0$ is a numerical constant to be adjusted in the end. Note that
\begin{equation}\label{bad}
\vp'_n\le c 2^{-n-1}, 
            \ \ \sum\nl_n^\infty \vp'_k \le c 2^{-n} \vp[n,E_n]
            \text{  and  }
            \sum\nl_n^\infty \vp'_k \le c 2^{-n} \d_n
            .\end{equation}
          Assume $(E_n, \cl E_n)$ has been constructed as well as $(T_k,\psi_k,\vp'_k)$ for $k<n$.
            Let $\psi_n: \cl E_n \to C_{n+1}$ be the $\vp$-morphism (for an $\vp$  to be specified) given by Lemma \ref{l5} 
            and let
             \begin{equation}\label{tee6} E_{n+1} =      \psi_n( \cl E_n ) + \psi_n( \cl E_n )\psi_n( \cl E_n )+ E^0_{n+1} ,\end{equation} and
            $$\cl E_{n+1} = \cl E[n+1,E_{n+1}] +  i_{n+1}(E_{n+1}) + i_{n+1}(E_{n+1})i_{n+1}(E_{n+1}) .$$           
            Note $\psi_n( \cl E_n )  \subset E_{n+1}$.
            We define $T_n : E_n \to E_{n+1}$ by $T_n=  \psi_n {i_n}_{| E_n}$.
            By Lemma \ref{l5}
            we have 
            $\|T_n\|_{cb}\le 1+\vp$. 
            Since $\|q_n\psi_n -Id\|\le \vp$ we have also 
            $\|q_nT_n  -{i_n}_{|E_n}\| \le \vp $
            and hence (see e.g. \cite[p. 75]{P4}) $\|q_nT_n  -{i_n}_{|E_n}: E_n \to B_n\|_{cb} \le \vp \dim(E_n) $.
            Since $i_n$ is isometric this gives us  $\|T_n(x)\|\ge \|q_nT_n(x)\|\ge (1-\vp)\|x\|$,
            and hence  $\|{T^{-1}_n}_{|T_n(E_n)}\| \le (1-\vp)^{-1}$
            and similarly (assuming $\vp\dim(E_n)< 1$) $\|{T^{-1}_n}_{|T_n(E_n)}\|_{cb} \le (1-\vp\dim(E_n))^{-1}$.
             Thus,  choosing $0<\vp < \vp'_n$ small enough we can 
            obtain  $E_{n+1}, \cl E_{n+1}, \psi_n,T_n $
            with all the required properties,  to complete the induction reasoning.
            
            Note that   $T_n=\psi_n {i_n}_{|E_n}$  is, just like $\psi_n$, 
            an $\vp'_n$-morphism into $C_{n+1}$.
            
            Let $\theta_n: E_n \to \ell_\infty(\{C_n\})$ 
            be defined by 
            \begin{equation}\label{bad1}\theta_n(x)=(0,\cdots,0,x,T_nx,T_{n+1}T_nx, \cdots)
            \end{equation}
             with $x$ standing at the $n$-th place,
            and
            let $w_n: E_n \to \L$ be such that   
             \begin{equation}\label{bad2}w_n=Q \theta_n ,\end{equation} 
             where $Q:  \ell_\infty(\{C_n\}) \to \L$ is the quotient map.\\            
            Let ${\eta_n}$ be such that $1+{\eta_n}=\prod\nl_n^\infty (1+\vp'_k)$. Note ${\eta_n} \to 0$. 
            Since $\|T_k\|_{cb}\le 1+\vp'_k$ we have    clearly
            $$\|w_n\|_{cb}\le\|\theta_n\|_{cb}\le 1+{\eta_n}.$$
            Moreover, since $\|{T^{-1}_k}_{|T_k(E_k)}\|_{cb}\le 1+\vp'_k$, we have for any $x\in E_n$
            $$\|w_n(x)\|=\limsup\nl_k\|(T_{n+k}\cdots T_{n+1}T_n)(x) \|\ge    (1+{\eta_n})^{-1}\|x\| ,  $$
            and hence $\|{w^{-1}_n}_{|w_n(E_n)} \|\le 1+{\eta_n}$.
            A similar reasoning applied to $x\in M_N(E_n)$ with $N$ arbitrary yields
            $\|{w^{-1}_n}_{|w_n(E_n)} \|_{cb} \le 1+{\eta_n}.$
            We set $Y_n=w_n(E_n)\subset \L$.
          Note 
          \begin{equation}\label{ee6} 
          \forall x\in Y_n\quad 
          w_{n}(x) = w_{n+1} T_n(x).  
          \end{equation}  
          Therefore    $Y_n\subset Y_{n+1}$. 
          Moreover,
          $Y_n$ is a f.d.s.a. subspace of $\L$.            
          The proof  that $A=\ovl {\cup Y_n}$ is a $C^*$-subalgebra
          is entirely analogous to that of Theorem \ref{t1}
          so we skip it.
          \\ By \eqref{ee3} with  \eqref{bad1} and \eqref{bad2}, $\theta_n$, and hence also $w_n$, is a $\d'_n$-morphism for some $\d'_n\to 0$
          with $\d'_n  \approx \sum_n^\infty \vp'_k$. We set $\beta_n=w_n^{-1}: Y_n \to E_n$ and
          $\gamma_n= w_{n+1}\psi_n: \cl E_n \to Y_{n+1}$.
          Then by \eqref{ee3} again $\gamma_n$ is a $\d''_n$-morphism for some $\d''_n\to 0$  with $\d''_n  \approx \sum_n^\infty \vp'_k$.
        We have $\gamma_n i_n \beta_n =w_{n+1} T_n w_n^{-1}$ 
        and hence by \eqref{ee6}  $\gamma_n i_n \beta_n(x)=x$ for any $x\in Y_n$.
        Lastly $\d'_n$ and $\d''_n$ being dominated (up to constant) by
        $\sum_n^\infty \vp'_k$ as in \eqref{ee3}, it is clear that our initial choice of the constant $c$ in the definition of   $(\vp'_n)$ 
        can be  adjusted small enough (see \eqref{bad}) in order to have   
        $\max\{\d'_n,\d''_n, \vp'_n,{\eta_n}\} \le \d_n$ and also
        $\d''_n\le  \vp[n,E_n]$. The latter shows that
        $\gamma_n$ is a fortiori a $\vp[n,E_n]$-morphism.
        This completes the proof. 
              \end{proof}
              \begin{rem} If we drop the LLP assumption, we only obtain $\|w_n\|=\|\beta_n^{-1}\|\le 1+\d_n$.
              \end{rem}
              The following diagram summarizes the preceding proof.
              $$\xymatrix{ 
                &   C_n\ar@{^{(}->}[r]^{  \  \  i_n  \quad  }  &  B_n&  {C_{n+1}}\ar@{>>}[l]_{ q_n}  & & \\  
  Y_n  \ar[r]^{w_n^{-1}}
  & E_n\ar@/_/[rr]_{T_n}\ar@{^{(}->}[u]\ar@{^{(}->}[r]^{  \  \  {i_n}{|E_n}  \quad  }  &  {\cl E}_n\ar@{^{(}->}[u]\ar[r]^{\psi_n} & E_{n+1}\ar@{^{(}->}[u] \ar[r]^{w_{n+1}} &Y_{n+1} &}$$

\begin{rem} In the general situation described in Remark \ref{rr4},
let $\L(\{C_n\})= \ell_\infty(\{C_n\}) /c_0(\{C_n\}) $
and $\L(\{B_n\})= \ell_\infty(\{B_n\}) /c_0(\{B_n\}) $. With the notation of the preceding proof,
we will say (in the style of \cite{BK97}) that $A\subset \L(\{C_n\})$ is the inductive limit 
of the system $\{E_n, T_n\}$, and we denote
$A=A(\{E_n, T_n\})$.
We define  $T_n^b: \cl E_n \to \cl E_{n+1}$ by
$T_n^b(x)= i_{n+1} \psi_n(x)$ ($x\in \cl E_n$).
Then we have  
 a $C^*$-subalgebra
$A(\{\cl E_n, T^b_n\})\subset \L(\{B_n\})$  that is  similarly  the inductive limit 
of the system $\{\cl E_n, T^b_n\}$. 
Let $Q^b: \ell_\infty(\{B_n\})\to \L(\{B_n\})$,
  and $w_n^b: \cl E_n \to \ell_\infty(\{B_n\})$
  be the analogues of $Q$ and $w_n$
  for the system $\{\cl E_n, T^b_n\}$ 
Let $q^\sharp: \L(\{C_n\})\to \L(\{B_n\})$
and $i^\sharp: \L(\{C_n\})\to \L(\{B_n\})$
 be the morphisms 
associated respectively to $(q_n)$ and $(i_n)$.
Let $\sigma: A \to \L(\{C_n\})$ be the (inclusion) embedding.
\\ With this notation,   we have
$i^\sharp \sigma=q^\sharp \sigma$
and moreover  
$i^\sharp \sigma (A)= q^\sharp\sigma(A)=A(\{\cl E_n, T^b_n\}).$
The details are easy to check using  
$\|   q_{n+k} T_{n+k} - {i_{n+k}}_{|E_{n+k}} \| \le \vp_{n+k}$
which shows that $i^\sharp \sigma  = q^\sharp\sigma$ on $Y_n$, together with 
$i_{n+1}T_n-T_n^b i_n=0$ on $E_n$
and $T_{n+k }\cdots T_{n+1} T_n^b - T^b_{n+k }\cdots T^b_{n+1} T_n^b =0$ on $\cl E_n$.

The $C^*$-algebra $$\cl A=\{x\in \L(\{C_n\})\mid q^\sharp(x)=i^\sharp(x) \}=\{ Q( (x_n))\mid   (x_n)\in \ell_\infty(\{C_n\}),  q_n(x_{n+1})=i_n(x_n) \ \forall n \},$$ 
which contains 
our $A$ as a $C^*$-subalgebra,  should probably be viewed
as the joint inductive/projective limit  of the system of subquotients $(C_n)$.
$$\xymatrix{ 
 A \ar@{^{(}->}[r]& {\cl A} \ar@{^{(}->}[d]  \ar@{^{(}->}[r]  & \L(\{C_n\})\ar[d]^{q^\sharp} \\
 & \L(\{C_n\})\ar[r]^{i^\sharp} & \L(\{B_n\}) }$$
\end{rem}

            \begin{cor}\label{c1}
            In the preceding situation, 
            assume $(C_n,B_n,i_n,q_n)=  (C, B,i,q) $  for all $n \in \NN$.
            Assume $C$ separable.
            Then for any separable $C^*$-algebra $D$ such that
            $i\otimes Id_D: C \otimes_{\min} D \to B \otimes_{\max} D$ is continuous
            (and hence isometric) we can obtain a separable $C^*$-algebra $A$ such that $(A,D)$  is a nuclear pair, and moreover such that $A$   and $C$ are locally equivalent (see Definition \ref{d17}).
   \end{cor}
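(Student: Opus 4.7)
The plan is to apply Theorem~\ref{rt1} to the constant system $(C_n,B_n,i_n,q_n)=(C,B,i,q)$, with the correspondence $E \mapsto (\cl E[n,E],\vp[n,E])$ chosen via Lemma~\ref{ll5} so that the asymptotic morphisms $\gamma_n$ produced by the theorem approximately preserve the max-tensor product with $D$. The hypothesis that $i\otimes Id_D$ is bounded from $C\otimes_{\min} D$ to $B\otimes_{\max} D$ will let us migrate from the min-norm inside $A$ to the max-norm inside $B$, whereupon $\gamma_n$ will bring us back to the max-norm inside $A$.

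More precisely, since $D$ is separable, fix an increasing sequence $(F_n)$ of f.d.s.a.~subspaces of $D$ with dense union, together with parameters $\d_n>0$ with $\d_n\to 0$. For each $n\ge 0$ and each f.d.s.a.~$E\subset C$, apply Lemma~\ref{ll5} to the ambient algebras $B,D$, the subspaces $i(E)\subset B$ and $F_n\subset D$, and tolerance $\d_n$; this yields $\vp[n,E]>0$ and a f.d.s.a.~subspace $\cl E[n,E]\supset i(E)$ of $B$ such that for every $\vp[n,E]$-morphism $\psi$ on $\cl E[n,E]$ into any $C^*$-algebra $C'$,
\[
\|(\psi\otimes Id_D)(x)\|_{C'\otimes_{\max} D}\le (1+\d_n)\|x\|_{B\otimes_{\max} D}\qquad \forall\, x\in i(E)\otimes F_n.
\]
Fix also a dense sequence $(E_n^0)$ of f.d.s.a.~subspaces of $C$. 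Feeding this data into Theorem~\ref{rt1} produces $A\subset \cl L$ with $A=\ovl{\cup_n Y_n}$, f.d.s.a.~subspaces $E_n\supset E_n^0$ in $C$ and $\cl E_n\supset \cl E[n,E_n]$ in $B$, and the factorization $Y_n\xrightarrow{\beta_n} E_n\xrightarrow{i}\cl E_n\xrightarrow{\gamma_n}Y_{n+1}$ of the inclusion $Y_n\hookrightarrow Y_{n+1}$, with $\max\{\|\beta_n\|_{cb},\|\beta_n^{-1}\|_{cb}\}\le 1+\d_n$ and with $\gamma_n$ an $\vp[n,E_n]$-morphism.

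To verify that $(A,D)$ is a nuclear pair, by density it is enough to bound $\|z\|_{A\otimes_{\max}D}$ for an arbitrary $z\in Y_m\otimes F_m$ with $m$ chosen large. Set $x=(i\beta_m\otimes Id_D)(z)\in i(E_m)\otimes F_m$. By hypothesis $i\otimes Id_D:C\otimes_{\min}D\to B\otimes_{\max}D$ is contractive, so
\[
\|x\|_{B\otimes_{\max}D}\le \|(\beta_m\otimes Id_D)(z)\|_{C\otimes_{\min}D}\le (1+\d_m)\|z\|_{A\otimes_{\min}D},
\]
using that the inclusion $Y_m\subset A$ is min-isometric. Since $\gamma_m$ restricts to an $\vp[m,E_m]$-morphism on $\cl E[m,E_m]$ whose image lies in the $C^*$-subalgebra $A$, Lemma~\ref{ll5} applied with $C'=A$ gives $\|(\gamma_m\otimes Id_D)(x)\|_{A\otimes_{\max}D}\le (1+\d_m)\|x\|_{B\otimes_{\max}D}$. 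Because $\gamma_m\, i\,\beta_m$ is the inclusion $Y_m\hookrightarrow Y_{m+1}$, we have $(\gamma_m\otimes Id_D)(x)=z$ in $A\otimes D$, whence $\|z\|_{A\otimes_{\max}D}\le (1+\d_m)^2\|z\|_{A\otimes_{\min}D}$; letting $m\to\infty$ proves nuclearity of $(A,D)$. Local equivalence of $A$ and $C$ then follows: on the one hand $\beta_n$ is a near-isometric cb-identification of $Y_n$ with $E_n\subset C$, giving $d_{S C}(A)=1$; on the other hand the base subspaces $E_n^0\subset E_n$ exhaust (in $d_{cb}$) all f.d.~subspaces of $C$ by density, and $\beta_n^{-1}(E_n^0)\subset Y_n\subset A$ yields near-isometric copies inside $A$.

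The main obstacle is synchronizing the a priori declaration of the correspondence $E\mapsto (\cl E[n,E],\vp[n,E])$ required by Theorem~\ref{rt1} with the inductive production of $E_n$: the tolerance from Lemma~\ref{ll5} depends on both $E$ and on the f.d.~subspace $F_n\subset D$ under scrutiny, yet the theorem must fix the correspondence before any $E_n$ is produced. The indexing by $n$ (tied to $F_n$) is what resolves this despite $\dim D=\infty$: each element of $A\otimes D$ is approximated by one living in $Y_m\otimes F_m$ for some $m$, which is then controlled by the $m$-th step of the construction.
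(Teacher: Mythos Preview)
Your argument follows essentially the same route as the paper's: you use Lemma~\ref{ll5} to define the correspondence $E\mapsto(\cl E[n,E],\vp[n,E])$, feed it into Theorem~\ref{rt1}, and then run the chain $\beta_m,\ i,\ \gamma_m$ to pass from $\|\cdot\|_{A\otimes_{\min}D}$ to $\|\cdot\|_{B\otimes_{\max}D}$ and back to $\|\cdot\|_{A\otimes_{\max}D}$. The nuclearity half is correct and matches the paper's proof.

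There is a small gap in the direction ``$C$ locally embeds in $A$''. You fix a dense sequence $(E_n^0)$ and claim that $\beta_n^{-1}(E_n^0)\subset Y_n$ gives near-isometric copies inside $A$. But that copy is only $(1+\d_n)^2$-isometric, so for a given f.d.\ $Z\subset C$ you need an index $n$ that is \emph{both} large (so that $\d_n$ is small) \emph{and} such that $E_n^0$ is $d_{cb}$-close to $Z$. Mere density of the sequence $(E_n^0)$ does not ensure this: a given $Z$ might only be well approximated by $E_n^0$ for a few small values of $n$. The paper handles this explicitly by taking a dense family $(\cl F_k)$ and arranging the sequence $(E_n^0)$ so that each $\cl F_k$ appears as $E_n^0$ for infinitely many $n$; then for any $Z$ and any $\vp>0$ one can pick $n$ large enough that both $\d_n<\vp$ and $E_n^0$ approximates $Z$. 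With this adjustment your proof is complete.
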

   
   \begin{proof} This follows from the theorem by Lemma \ref{ll5}. We may assume
   $D=\ovl{\cup F_n}$ for an increasing sequence of f.d.s.a. subspaces $F_n \subset D$,
   and similarly
   $C=\ovl{\cup {\cl F}_n}$ for an increasing sequence of f.d.s.a. subspaces $\cl F_n \subset C$.
     By  Lemma \ref{ll5} applied to $i(E) \otimes F_n$, 
     we can select a number  $\vp[n,E]>0$ and
     a f.d.s.a. subspace $\cl E[n,E]\subset B$
     containing $i(E)$  associated to $\d= 1/n$ according to Lemma \ref{ll5}.
     
         Let $x\in Y_n \otimes D_n$. 
     We then apply the theorem
     with $\{E_n^0\}=\{\cl F_n\}$ where we make sure that each $\cl F_n$ is repeated infinitely many times in the sequence $\{E_n^0\}$.
     Since $i: C \to B$ transforms min-norms to max-norms,
     we have
     $$\|(i\beta_n\otimes Id_D)(x)\|_{B \otimes_{\max} D}
     \le \|(\beta_n\otimes Id_D)(x)\|_{C \otimes_{\min} D}
     \le (1+\d_n) \|x\|_{A \otimes_{\min} D}.$$
     Since $\gamma_n$ is an $\vp[n,E_n]$-morphism and  $(\gamma_n\otimes Id_D)(i\beta_n\otimes Id_D)(x)=x$ we then obtain
     $$\forall x\in Y_n \otimes D_n\quad \|x\|_{A \otimes_{\max} D} \le (1+{1/n})\|(i\beta_n\otimes Id_D)(x)\|_{B \otimes_{\max} D}\le (1+{1/n})(1+\d_n) \|x\|_{A \otimes_{\min} D}.$$
     Since $n$ can be chosen arbitrarily large this implies that $(A,D)$  is a nuclear pair.\\
     To show that $C$ locally embeds in $A$ it suffices to show by perturbation
     that each $\cl F_n$  locally embeds in $A$. Then since the inclusions $\cl F_n= E_m^0\subset E_m\subset C$ are valid for infinitely many $m$'s and
     $E_m$ is completely $(1+\d_m)^2$-isomorphic to $Y_m$, 
     we conclude that $C$ locally embeds in $A$. Since  $Y_n$ is completely
     $(1+\d_n)$-isomorphic to $E_n\subset C$, we know that $A$ is  locally embeds in $C$.
            \end{proof}

      \begin{proof}[Alternate proof of Theorem \ref{t1}]
      Let $B$ be as before a separable $C^*$-algebra with WEP
      containing $\cl C$ as a $C^*$-subalgebra. Then the inclusion
      $i: \cl C \to B$ satisfies 
      $$\|i
      \otimes Id_{\cl C} : \cl C \otimes_{\min} \cl C
    \to B \otimes_{\max} \cl C \|=1$$
     simply because  the identity of $B$ satisfies this by definition of the WEP.
      We apply Corollary \ref{c1} with
      $C_n=C_0(\cl C)$ and $B_n=C_0(B)$ for all $n\ge 1$ with $D=\cl C$.
      Lemma \ref{ll7} shows that the corresponding $q_n$'s
      almost allow liftings. By Corollary \ref{c1}
      the resulting $A$ has the WEP and locally embeds in $C_0(\cl C)$ (which has the LLP)
      and hence in $\cl C$. It follows that $A$ has the LLP by Lemma
      \ref{l4}.   
      \end{proof}
      \begin{rem} For  $C^*$-algebras $A,C$ the property that  $A$
      locally embeds  in $C$ implies that $A$ embeds in an ultrapower of $C$. In general 
    the converse does not hold. However it does  if $A$ has the LLP.
    Thus it does not significantly weaken Theorem \ref{t1} if we just say that
    each of $A$ and $\cl C$ embeds in an ultrapower of the other.
      \end{rem}
      
      \n{\it Acknowledgement.} I thank the referee for his careful reading.

\end{document}